\newtheorem{theorem}{Theorem} [section]
\newtheorem{lemma}[theorem]{Lemma}
\newtheorem{proposition}[theorem]{Proposition}
\newtheorem{example}[theorem]{Example}
\newtheorem{remark}[theorem]{Remark}
\newtheorem{definition}[theorem]{Definition}
\newcommand{\dom}{\mathrm{dom}}
\newcommand{\ran}{\mathrm{ran}}
\newcommand{\id}{\mathrm{id}}
\keywords{Inverse semigroup, Boolean inverse semigroup, semisimple Boolean inverse semigroup, additive congruence, additive ideal, partial refinement monoid, refinement monoid, type monoid, inverse semigroup algebra, graph monoid, graph inverse semigroup}
\numberwithin{equation}{section}
\title[Boolean inverse semigroups and their type monoids]{Boolean inverse semigroups and their type monoids}
\author{Ganna Kudryavtseva}
\address{G. Kudryavtseva: University of Ljubljana,
Faculty of Mathematics and Physics, Jadranska ulica~19, SI-1000 Ljubljana, Slovenia / Institute of Mathematics, Physics and Mechanics, Jadranska ulica 19, SI-1000 Ljubljana, Slovenia}
\email{ganna.kudryavtseva\symbol{64}fmf.uni-lj.si}
\thanks{The author was supported by the ARIS grant P1-0288.}
\subjclass[2010]{20M18, 20M10, 20M25, 16E20, 08A55}
\begin{document}

\maketitle

\begin{abstract}  
This is an expository paper which provides a quick introduction to Boolean inverse semigroups and their type monoids, with the emphasis on techniques and insights of the theory, and also treats the connection of the type monoid ${\mathrm{Typ}}(S)$ of a Boolean inverse semigroup $S$ with the monoid $V(K\langle S\rangle)$ of the ring $K\langle S\rangle$ assigned to $S$. We give original direct and simple proofs of some known results, such as the structure of semisimple  Boolean inverse semigroups and the presentation of the type monoid by generalized rook matrices. We also prove that the type monoid of the tight Booleanization of a graph inverse semigroup is isomorphic to the graph monoid of this graph. 
\end{abstract}

\section{Introduction}
Boolean inverse semigroups are non-commutative generalizations of generalized Boolean algebras and are inverse semigroups whose idempotent semilattice has a structure of a generalized Boolean algebra, and also satisfy an additional condition that compatible joins exist (see Subsection \ref{subs:bis}). It is remarkable that the classical Stone duality between generalized Boolean algebras and locally compact Stone spaces admits a natural extension to the non-commutative setting of Boolean inverse semigroups which form a category dually equivalent to the category of Stone groupoids, known also as ample groupoids. This result is due to Lawson \cite{L10}, see also \cite{KL17, L12, LL13}, and develops the ideas of an earlier work of Resende \cite{R07} which established a correspondence between pseudogroups (which are analogues of Boolean inverse semigroups, whose idempotents form a frame) and suitable localic \'etale groupoids. Later on, Resende's results were extended to a categorical duality (in fact, varying morphisms, to several dualities) in \cite{KL17,LL13}.  Moreover, \cite{KL17} also establishes an adjunction between localic \'etale groupoids and topological \'etale groupoids extending the classical adjunction \cite{JS} between locales and topological spaces (and also a generalization of these results from inverse to two-sided restriction semigroups and from groupoids to categories). 

The non-commutative Stone duality makes Boolean inverse semigroups algebraic counterparts of ample groupoids and a part of the broad research stream, which relates inverse semigroups, \'etale groupoids, $C^*$-algebras \cite{Exel:comb,Pat} and Steinberg algebras (originated in \cite{S10}, see also the excellent survey paper \cite{CH}). Boolean inverse semigroups can be looked at as abstract analogues of inverse semigroups of compact and open local bisections of an ample groupoid. Important properties of the groupoid, such as being Hausdorff, effective, minimal or topologically free are equivalent to suitable algebraic properties of its Boolean inverse semigroup (see \cite[Proposition 3.7]{S10} and \cite[Proposition 2.10]{StSz}). Steinberg showed in \cite{S10} that, for an inverse semigroup $S$, the semigroup algebra $KS$, over a commutative ring $K$ with a unit, is isomorphic to the groupoid algebra $K{\mathcal G}S$ of the universal groupoid  ${\mathcal G}S$ (see \cite{Pat}) of $S$, nowadays called the {\em Steinberg algebra} of ${\mathcal G}S$. The Boolean inverse semigroup ${\mathcal B}(S)$ of compact and open local bisections of ${\mathcal G}S$ is called the {\em Booleanization} of $S$, and one can show that the algebra $K{\mathcal G}S$ is isomorphic to the algebra $K_0{\mathcal B}_{tight}(S)$ of a certain quotient ${\mathcal B}_{tight}(S)$ of  ${\mathcal B}(S)$, called the {\em tight Booleanization} of $S$ (see \cite{StSz} for details and \cite{K20} for a more general notion). Boolean inverse semigroups seem to be sometimes a convenient replacement for their underlying ample groupoids  to work with, which we illustrate by the proof of Theorem \ref{th:isom_graph}, that should be compared with the proof of \cite[Theorem 7.5]{ABPS21}.  We also mention the paper \cite{L21a} by Lawson which reinterprets, via Boolean inverse semigroups, the Matui's spatial realization theorem~\cite{M15}.

The first aim of this paper is to give a quick introduction into techniques and insights, needed to start working with Boolean inverse semigroups. That is why we included some of the basic (but important) results with proofs (which are mostly original and are often shorter than those to be found in the literature).  Section~\ref{s:inv} introduces inverse semigroups, with the emphasis on the notions (such as compatibility, orthogonality, fundamentality) needed in the sequel. Section~\ref{s:boolean} brings  Boolean inverse semigroups on the stage and treats some basic notions and results of their theory, such as morphisms, ideals and congruences.

The scope of this paper did not allow us to give a detailed treatment of some important aspects of the theory, such as the duality between Boolean inverse semigroups ample groupoids (see Remark \ref{rem:duality}).  We did, however, develop the duality (at the level of objects) for semisimple Boolean inverse semigroups from scratch  and using elementary tools (see Section~\ref{s:semisimple}), which should hopefully convey the essence of the general case, it roughly only remains to make (an upgrade of) a standard passage from a finite Stone duality to a general one via replacing atoms by ultrafilters and bringing in a suitable topology. The presented duality can be also derived as a special case of the Stone duality for Boolean inverse semigroups, see \cite{L21} for details. 

The other focus of the paper is on the type monoid of a Boolean inverse semigroup, introduced in \cite{Wal} (see also \cite{KLLR}) which is an analogue of the monoid $V(R)$ associated to a ring $R$ (see, e.g., \cite{ABP20, W17} and references therein). Various aspects of the theory of type monoids of Boolean inverse semigroups (and also of Boolean inverse semigroups {\em per se}) are studied in depth in the book by Wehrung \cite{W17}. 
Section \ref{s:type} opens with an introduction to partial commutative monoids, refinement monoids and proceeds to the definition of the partial commutative monoid ${\mathrm{Int}}(S)$, associated to a Boolean inverse semigroup $S$, and its enveloping monoid ${\mathrm{Typ}}(S)$, the {\em type monoid} of $S$. In Theorem~\ref{th:type} we provide a direct proof (of a result by Wehrung) that ${\mathrm{Typ}}(S)$ is isomorphic to the type monoid ${\mathrm{Typ}(M_{\omega}(S)}) = {\mathrm{Int}}(M_{\omega}(S))$ of  the Boolean inverse semigroup $M_{\omega}(S)$ of generalized rook matrices over $S$, see Subsection~\ref{subs:rook} for details. In Section \ref{s:connection} we present the canonical map from ${\mathrm{Typ}}(S)$ to $V(K\langle S\rangle)$, the monoid associated with the Steinberg algebra $K\langle S\rangle$, and prove (see Proposition \ref{prop:lm}) that if $S$ is locally matricial, this map is an isomorphism (cf. \cite[Proposition 6.7.3]{W17}). Finally, in Section~\ref{s:graph} we use Boolean inverse semigroups to prove that ${\mathrm{Typ}}({\mathcal B}_{tight}(I(\Gamma)))$ where $I(\Gamma)$ is a graph inverse semigroup assigned to a  row-finite graph $\Gamma$, is canonically isomorphic to the graph monoid $M_{\Gamma}$ (see Theorem \ref{th:isom_graph}).

Boolean inverse semigroups have great potential to find further applications in the theory of \'etale groupoids and related areas, for example, topological full groups, Steinberg algebras and groupoid $C^*$-algebras, which are at a crossway of several branches of algebra and analysis and attract attention of authors with various backgrounds. This paper makes an accent on techniques and ideas supplied by inverse semigroup theory and 
is intended to be useful for beginners, and also for experts with with background aside from semigroup theory. 

\section{Inverse semigroups}\label{s:inv}

\subsection{Definition and first properties.} An element $e$ of a semigroup $S$ is called an {\em idempotent} if $e^2=e$, and  the set of all idempotents of $S$ is denoted by $E(S)$. For $a,b\in S$ we say that $b$ is an {\em inverse} of $a$  if $aba=a$ and $bab=b$. (Note that $a$ has an inverse if there is $b\in S$ such that $a=aba$, then $bab$ is an inverse of $a$). The semigroup $S$ is called {\em regular} if every $a\in S$ has at least one inverse. We say that $S$ is an {\em inverse semigroup} if every its element $a$ has precisely one inverse, denoted $a^{-1}$. It is well known that $S$ is an inverse semigroup if and only if it is regular and its idempotents commute. 

The prototypical example of an inverse semigoroup is the {\em symmetric inverse semigroup} ${\mathcal I}(X)$ on a set $X$. Its elements are all injective maps $Z\to X$ where $Z\subseteq X$. Such maps are called {\em partial permutations} of $X$. The binary multiplicaiton operation in ${\mathcal I}(X)$ is the natural {\em composition} of partial maps. For a partial permutation $f\colon Z\to X$, where $Z\subseteq X$, we say that $Z$ is the {\em domain} of $f$ and denote it by $\dom f$. The set $f(Z)$ is called the {\em range} (or the {\em image}) of $f$ and is denoted by $\ran f$. The map $f$ is thus a bijection $f\colon \dom f\to \ran f$ and its inverse $f^{-1}\colon \ran f\to \dom f$ is the inverse bijection to $f$. The idempotents of ${\mathcal I}(X)$ are precisely the identity maps on subsets of $X$ and $\id_Y\id_Z = id_{Y\cap Z} = \id_Z\id_Y$. Every inverse semigroup $S$ can be embedded into the symmetric inverse semigroup ${\mathcal I}(S)$. This fundamental results, known as the {\em Wagner-Preston\footnote{Mentioning Wagner first was suggested to me by Mark Lawson and acknowledges the fact that Wagner discovered the result before Preston.}}  theorem, shows that inverse semigroups can be looked at as inverse semigroups of {\em partial permutations}. 

If $S$ is an inverse semigroup and $a\in S$, the elements $a^{-1}a$ and $aa^{-1}$ are idempotents called the {\em domain idempotent} and the {\em range idempotent} of $a$ and are often denoted by ${\bf d}(a)$ and ${\bf r}(a)$, respectively. The {\em natural partial order} on $S$ is defined by $a\leq b$ if there is $e \in E(S)$ satisfying $a=be$. It is easy to show that $a\leq b$ holds if and only if $a=eb$ for some $e\in E(S)$ if and only if $a=b{\bf d}(a)$ if and only if $a = {\bf r}(a)b$. If we think of $S$ as of an inverse subsemigroup of some ${\mathcal I}(X)$, then $a\leq b$ just means that the partial permutation $a$ is a restriction of $b$.

Observe that for $a\in S$ and $e\in E(S)$ we have $a^{-1}ea = (ea)^{-1}(ea) = {\bf d}(ea)$, so that $a^{-1}ea$ is an idempotent.
It is easy to see that in an arbitrary inverse semigroup we have:
$$
{\bf d}(ab) = {\bf d}({\bf d}(a)b), \,\, {\bf r}(ab) = {\bf r}(a{\bf r}(b)).
$$
From the compatibility of the natural partial order with the multiplication and the inversion operations it follows that $a\leq b$ implies ${\bf d}(a)\leq {\bf d}(b)$ and similarly for ranges.

Any group is an inverse semigroup, where the inverse is the usual group inverse, and groups are precisely the inverse semigroups with one idempotent. 

Recall that the {\em least upper bound} or the  {\em join} of elements $a$ and $b$ of a poset is the element $c$ such that $a,b\leq c$, that is, $c$ is an {\em upper bound} of $a$ and $b$ and, in addition, $a,b\leq d$ implies $c\leq d$. The {\em greatest upper bound} or the {\em meet} is defined dually.
A {\em semilattice} is an inverse semigroup satisfying $S=E(S)$. If $E$ is a semilattice, for any $e,f\in E$ their product $ef$ is the  meet $e\wedge f$ with respect to the natural partial order. This leads to the equivalent characterization of semilattices as posets with binary meets. Groups and semilattices are thus two extreme classes of inverse semigroups (and a wide and important class of inverse semigroup, $E$-unitary inverse semigroups, can be constructed from groups and semilattices using the construction of the partial action product \cite{KelL}). In the sequel we will need the following observation.

\begin{lemma}\label{lem:dom1} ${\bf d}(ab)\leq {\bf d}(b)$ and ${\bf r}(ab) \leq {\bf r}(a)$.
\end{lemma}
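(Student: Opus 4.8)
The plan is to reduce both inequalities to two facts already recorded in the excerpt: the identities ${\bf d}(ab) = {\bf d}({\bf d}(a)b)$ and ${\bf r}(ab) = {\bf r}(a{\bf r}(b))$, together with the monotonicity of ${\bf d}$ and ${\bf r}$ with respect to the natural partial order (the observation that $x\leq y$ implies ${\bf d}(x)\leq{\bf d}(y)$, and similarly for ranges). The key recognition is that the two expressions ${\bf d}(a)b$ and $a{\bf r}(b)$ are each visibly below $b$ and $a$ respectively, simply because they are obtained by multiplying by an idempotent on the appropriate side.

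For the first inequality, I would first note that ${\bf d}(a)b\leq b$: indeed ${\bf d}(a)b=eb$ with $e={\bf d}(a)\in E(S)$, which is exactly the form $eb$, $e\in E(S)$, characterizing elements below $b$. Applying monotonicity of ${\bf d}$ to ${\bf d}(a)b\leq b$ gives ${\bf d}({\bf d}(a)b)\leq{\bf d}(b)$, and rewriting the left-hand side by the identity ${\bf d}(ab)={\bf d}({\bf d}(a)b)$ yields ${\bf d}(ab)\leq{\bf d}(b)$.

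The second inequality is completely dual. Here I would observe that $a{\bf r}(b)\leq a$, since $a{\bf r}(b)=af$ with $f={\bf r}(b)\in E(S)$, matching the defining form $af$, $f\in E(S)$. Monotonicity of ${\bf r}$ then gives ${\bf r}(a{\bf r}(b))\leq{\bf r}(a)$, and the identity ${\bf r}(ab)={\bf r}(a{\bf r}(b))$ finishes the argument.

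There is really no substantial obstacle here; the only thing to get right is selecting the convenient characterization of the natural order. As a self-contained alternative that avoids invoking monotonicity, one can argue directly inside the semilattice $E(S)$, where $x\leq y$ is equivalent to $xy=x$. Writing ${\bf d}(ab)=b^{-1}(a^{-1}a)b$ (which is an idempotent, since $b^{-1}eb$ is idempotent for any $e\in E(S)$, as recorded in the excerpt), one checks ${\bf d}(ab)\,{\bf d}(b)=b^{-1}(a^{-1}a)b\,b^{-1}b=b^{-1}(a^{-1}a)(bb^{-1}b)=b^{-1}(a^{-1}a)b={\bf d}(ab)$, using $bb^{-1}b=b$; this says precisely ${\bf d}(ab)\leq{\bf d}(b)$, and the range statement follows by the symmetric computation.
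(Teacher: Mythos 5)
Your proposal is correct, and it in fact contains two valid arguments. Your primary route --- observing ${\bf d}(a)b\leq b$ from the characterization of the natural order, applying monotonicity of ${\bf d}$, and rewriting via the identity ${\bf d}(ab)={\bf d}({\bf d}(a)b)$ --- is genuinely different from the paper's proof, which is a single direct computation: ${\bf d}(ab)\,{\bf d}(b)=(ab)^{-1}ab\,b^{-1}b=(ab)^{-1}ab={\bf d}(ab)$, followed by deducing the range inequality from the domain one via ${\bf r}(a)={\bf d}(a^{-1})$. Your route is a bit more conceptual: it leans on the two displayed identities and the monotonicity remark stated just before the lemma (both legitimately available at that point, and neither circular, since they are proved by direct manipulation not invoking the lemma), at the cost of being less self-contained; the paper's computation is shorter and uses nothing beyond $bb^{-1}b=b$ and the meaning of $\leq$ on idempotents. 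Notably, the ``self-contained alternative'' you give at the end is essentially the paper's proof verbatim; the only cosmetic difference is that you close the range inequality by a symmetric computation (which does go through, e.g. ${\bf r}(ab)\,{\bf r}(a)=abb^{-1}(a^{-1}aa^{-1})=abb^{-1}a^{-1}={\bf r}(ab)$), whereas the paper dualizes the first inequality using ${\bf r}(a)={\bf d}(a^{-1})$.
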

\begin{proof}
We have ${\bf d}(ab) {\bf d}(b) = (ab)^{-1}ab b^{-1}b = (ab)^{-1}ab = {\bf d}(ab)$. The second inequality follows from the first one applying ${\bf r}(a) = {\bf d}(a^{-1})$.
\end{proof}

We say that elements $a$ and $b$ are {\em compatible}, denoted $a\sim b$, if $a^{-1}b$ and $ab^{-1}$ are idempotents, and that they are {\em orthogonal}, denoted $a\perp b$, if $a^{-1}b = ab^{-1} = 0$. Note that $a\perp b$ implies $a\sim b$ and any two idempotents are compatible. Two idempotents $e$ and $f$ are orthogonal if and only if $ef=0$. If $s\perp t$, we will  write $s\oplus t$ instead of $s\vee t$ to emphasise that the join is orthogonal. Two partial permutations $a,b\in {\mathcal I}(X)$ are compatible if and only if their union $a\cup b$ is a partial permutation, which then coincides with the join $a\vee b$. The next well known lemma provides a handy characterization of compatibility avoiding referring to inverse elements.

\begin{lemma}\label{lem:comp} \mbox{}
\begin{enumerate}
\item $a^{-1}b$ is an idempotent if and only if ${\bf r}(a)b = {\bf r}(b)a$.
\item $ab^{-1}$ is an idempotent if and only if $b{\bf d}(a) = a{\bf d}(b)$.
\item $a\sim b$ if and only if ${\bf r}(a)b = {\bf r}(b)a$ and $b{\bf d}(a) = a{\bf d}(b)$.
\end{enumerate}
\end{lemma}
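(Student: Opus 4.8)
The plan is to observe that the three statements reduce to a single core computation. Statement (3) is immediate from (1) and (2) together with the definition of compatibility, which says precisely that both $a^{-1}b$ and $ab^{-1}$ are idempotents; so once (1) and (2) are in hand there is nothing further to do. Moreover, I would derive (2) from (1) by applying the latter to the pair $a^{-1},b^{-1}$ in place of $a,b$. Since $(a^{-1})^{-1}b^{-1}=ab^{-1}$ and ${\bf r}(a^{-1})={\bf d}(a)$, statement (1) for $a^{-1},b^{-1}$ reads: $ab^{-1}$ is an idempotent if and only if ${\bf d}(a)b^{-1}={\bf d}(b)a^{-1}$. Taking inverses of both sides of the right-hand equation, and using that each ${\bf d}$ is a self-inverse idempotent, turns it into $b{\bf d}(a)=a{\bf d}(b)$, which is exactly the condition in (2). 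Thus the whole lemma rests on proving (1).

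For the backward direction of (1), I would assume ${\bf r}(a)b={\bf r}(b)a$, i.e.\ $aa^{-1}b=bb^{-1}a$, and compute directly
\[
a^{-1}b = a^{-1}(aa^{-1}b) = a^{-1}(bb^{-1}a) = (a^{-1}b)(a^{-1}b)^{-1} = {\bf r}(a^{-1}b),
\]
using $a^{-1}aa^{-1}=a^{-1}$ in the first step and the hypothesis in the second. The right-hand side is a range idempotent, so $a^{-1}b$ is an idempotent.

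For the forward direction I would assume $a^{-1}b$ is an idempotent, so that it is self-inverse: $a^{-1}b=(a^{-1}b)^{-1}=b^{-1}a$. Then
\[
aa^{-1}b = a(a^{-1}b)(a^{-1}b) = a(a^{-1}b)(b^{-1}a) = (aa^{-1})(bb^{-1})a = {\bf r}(a){\bf r}(b)a,
\]
where the second equality replaces one factor $a^{-1}b$ by its equal $b^{-1}a$. Since idempotents commute and ${\bf r}(a)a=a$, the last expression equals ${\bf r}(b){\bf r}(a)a={\bf r}(b)a=bb^{-1}a$, and as $aa^{-1}b={\bf r}(a)b$ this yields ${\bf r}(a)b={\bf r}(b)a$. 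The only genuine obstacle is the bookkeeping in this last chain: one must spot that it requires exploiting \emph{both} the self-inverse identity $a^{-1}b=b^{-1}a$ and the commutativity of idempotents. Everything else is routine manipulation with the relation $xx^{-1}x=x$.
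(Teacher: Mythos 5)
Your proof is correct, and its overall architecture matches the paper's: (3) follows from (1) and (2) by the definition of compatibility, and (2) follows from (1) by duality --- the paper dispatches this with the words ``by symmetry,'' and your explicit route (apply (1) to $a^{-1},b^{-1}$, then take inverses of both sides of the resulting equation, using that idempotents are self-inverse) is precisely what that phrase means. Your backward direction of (1) is also essentially the paper's computation: both multiply $aa^{-1}b=bb^{-1}a$ on the left by $a^{-1}$; the only difference is how one sees that $a^{-1}bb^{-1}a$ is idempotent --- you identify it as $(a^{-1}b)(a^{-1}b)^{-1}={\bf r}(a^{-1}b)$, while the paper cites the fact, recorded earlier in Section 2, that the conjugate $a^{-1}{\bf r}(b)a$ of an idempotent is an idempotent. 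Where you genuinely diverge is the forward direction of (1). The paper argues order-theoretically: from $e=a^{-1}b\in E(S)$ it gets ${\bf r}(a)b=ae\leq a$, multiplies this inequality on the left by ${\bf r}(b)$ to obtain ${\bf r}(a)b\leq{\bf r}(b)a$, and concludes by symmetry together with antisymmetry of the natural partial order. You instead argue purely equationally: self-inverseness of the idempotent $a^{-1}b$ gives $a^{-1}b=b^{-1}a$, whence
\[
{\bf r}(a)b \;=\; a(a^{-1}b)(a^{-1}b) \;=\; aa^{-1}bb^{-1}a \;=\; bb^{-1}aa^{-1}a \;=\; {\bf r}(b)a,
\]
using only commutativity of idempotents and $xx^{-1}x=x$. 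Your version buys independence from the natural partial order (no appeal to its compatibility with multiplication, nor to antisymmetry), so it could be stated before the order is even introduced; the paper's version, conversely, illustrates how the order is used in practice and fits the intuition of elements as restrictions of partial bijections. Both arguments are complete and correct.
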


\begin{proof} (1) Suppose that $e=a^{-1}b \in E(S)$. Then $ae\leq a$, that is, ${\bf r}(a)b\leq a$. If we multiply this inequality form the left with ${\bf r}(b)$, we obtain
${\bf r}(a) b\leq {\bf r}(b)a$. The opposite inequality follows by symmetry. Conversely, suppose that ${\bf r}(a)b = {\bf r}(b)a$,
that is, $aa^{-1}b = bb^{-1}a$. Multiplying this from the left with $a^{-1}$, we obtain $a^{-1}b = a^{-1}bb^{-1}a = a^{-1}{\bf r}(b) a$ which is an idempotent as an element conjugate to an idempotent.  Part (2) follows from by symmetry and 
part (3) from the first two parts.
\end{proof}

Note that, if $a$ and $b$ have an upper bound, $c$, then then $a{\mathbf d}(b) = c {\mathbf d}(a) {\mathbf d}(b) = b {\mathbf d}(a)$ and similarly ${\bf r}(a)b = {\bf r}(b)a$, so $a\sim b$. 
Consequently, a pair of non-compatible elements can not have a join. From $a\sim b$, it, however does not follow that $a\vee b$ exists. For example, in the singular part ${\mathcal I}_n\setminus {\mathcal S}_n$ (where ${\mathcal S}_n$ is the symmetric group) of ${\mathcal I}_n$ let $a,b$ be compatible such that their join in ${\mathcal I}_n$ is a permutation. Then $a$ and $b$ do not have an upper bound in ${\mathcal I}_n\setminus {\mathcal S}_n$. For an example of two compatible elements in an inverse semigroups, which have an upper bound, but do not have the join, see Example \ref{ex:a}.

\begin{lemma}\label{lem:ort}\mbox{}
\begin{enumerate}
\item $a^{-1}b = 0$ if and only if ${\bf r}(a) \perp {\bf r}(b)$.
\item $ab^{-1} = 0$ if and only if ${\bf d}(a) \perp {\bf d}(b)$.
\item $a\perp b$ if and only if ${\bf r}(a) \perp {\bf r}(b)$ and ${\bf d}(a) \perp {\bf d}(b)$.
\end{enumerate}
\end{lemma}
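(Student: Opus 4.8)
The plan is to prove (1) directly, deduce (2) from (1) by passing to inverses, and obtain (3) by combining the two with the definition of orthogonality. Throughout I use the fact, recorded just above the statement, that two idempotents $e,f$ are orthogonal precisely when $ef=0$; thus ${\bf r}(a)\perp{\bf r}(b)$ simply means $aa^{-1}bb^{-1}=0$, and ${\bf d}(a)\perp{\bf d}(b)$ means $a^{-1}ab^{-1}b=0$.

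For the forward implication of (1), I start from $a^{-1}b=0$ and multiply on the left by $a$ and on the right by $b^{-1}$; since $0$ is absorbing this yields $aa^{-1}bb^{-1}={\bf r}(a){\bf r}(b)=0$, i.e.\ ${\bf r}(a)\perp{\bf r}(b)$. For the converse, the key manipulation is to reinsert the range idempotents: using $a^{-1}=a^{-1}aa^{-1}$ and $b=bb^{-1}b$ I rewrite
$$a^{-1}b=a^{-1}aa^{-1}bb^{-1}b=a^{-1}\bigl(aa^{-1}bb^{-1}\bigr)b=a^{-1}\bigl({\bf r}(a){\bf r}(b)\bigr)b,$$
so that the hypothesis ${\bf r}(a){\bf r}(b)=0$ forces $a^{-1}b=0$. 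This single identity is really the whole content of (1), and I expect it to be the only step requiring any thought.

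Part (2) then follows formally by applying (1) to the pair $a^{-1},b^{-1}$: since $(a^{-1})^{-1}b^{-1}=ab^{-1}$ and ${\bf r}(a^{-1})={\bf d}(a)$, ${\bf r}(b^{-1})={\bf d}(b)$, statement (1) for $a^{-1},b^{-1}$ reads exactly as statement (2) for $a,b$. Finally, by definition $a\perp b$ means $a^{-1}b=ab^{-1}=0$, which by (1) and (2) holds if and only if ${\bf r}(a)\perp{\bf r}(b)$ and ${\bf d}(a)\perp{\bf d}(b)$, giving (3). The argument is entirely elementary; no genuine obstacle arises beyond spotting the idempotent-reinsertion identity in the converse of (1).
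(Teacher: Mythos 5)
Your proof is correct and takes essentially the same approach as the paper: the forward direction of (1) is the identical multiplication ${\bf r}(a){\bf r}(b)=a(a^{-1}b)b^{-1}=0$, and parts (2) and (3) are obtained by symmetry under inversion and by combining (1) and (2), exactly as in the paper. Your converse of (1), via the identity $a^{-1}b=a^{-1}(aa^{-1}bb^{-1})b$, is just a slightly cleaner one-line variant of the paper's step, which multiplies $aa^{-1}bb^{-1}=0$ on the left by $b^{-1}$ and on the right by $b$ to conclude $a^{-1}b=0$.
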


\begin{proof} If $a^{-1}b = 0$ then ${\bf r}(a){\bf r}(b) = a(a^{-1}b)b^{-1} = 0$. If ${\bf r}(a)\perp {\bf r}(b)$ then multiplying $aa^{-1}bb^{-1}=0$ from the left with $b^{-1}$ and from the right with $b$ we get $b^{-1}b=0$. Part (1) follows. Part (2) follows by symmetry. Part (3) follows from parts (1) and (2).
\end{proof}

Let $S$, $T$ be inverse semigroups and $\varphi\colon S\to T$ a semigroup homomorphism. Note that for any $s\in S$ the elements $\varphi(s)$ and $\varphi(s^{-1})$ are mutually inverse. By the uniqueness of the inverses it follows that $\varphi(s^{-1}) = \varphi(s)^{-1}$, that is, a semigroup homomorphism between inverse semigroups is automatically an {\em inverse semigroup homomorphism.} If $S$ and $T$ have zero elements, we will restrict our attention to homomorphisms which preserve the zero. 

\subsection{Fundamental inverse semigroups.} Here we present a brief account on fundamental inverse semigroups, for a slightly different exposition see \cite{Lawson_book}. A congruence $\tau$ on an inverse semigroup $S$ is called {\em idempotent-separating} if $e \mathrel{\tau} f$ with $e,f\in E(S)$ implies $e=f$. Define the relation $\mu$ on $S$ by $a\mathrel{\mu} b$ if $a^{-1}ea = b^{-1}eb$ or, equivalently, ${\bf d}(ea) = {\bf d}(eb)$ for all $e\in E(S)$.

\begin{lemma} $\mu$ is the maximal idempotent-separating congruence on $S$.
\end{lemma}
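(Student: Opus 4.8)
The plan is to prove three things about the relation $\mu$: that it is a congruence, that it is idempotent-separating, and that it contains every idempotent-separating congruence (maximality). I would begin by verifying that $\mu$ is an equivalence relation, which is immediate from its definition via the family of conditions ${\bf d}(ea) = {\bf d}(eb)$ indexed by $e \in E(S)$; reflexivity, symmetry and transitivity all follow from the corresponding properties of equality of idempotents. The genuinely structural claims are that $\mu$ respects multiplication on both sides and that it separates idempotents.

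For the congruence property, suppose $a \mathrel{\mu} b$ and let $c \in S$. I would show $ca \mathrel{\mu} cb$ and $ac \mathrel{\mu} bc$ separately. To handle $ca \mathrel{\mu} cb$, I must check ${\bf d}(eca) = {\bf d}(ecb)$ for all $e \in E(S)$; but $ec$ need not be idempotent, so the trick is to rewrite ${\bf d}(eca)$ using ${\bf d}(xy) = {\bf d}({\bf d}(x)y)$ and the fact that $a^{-1}fa$ is an idempotent for any idempotent $f$ (both recorded in the excerpt). Concretely ${\bf d}(eca) = (eca)^{-1}(eca) = a^{-1}(c^{-1}ec)a = a^{-1} f a$ where $f = {\bf d}(ec) = c^{-1}ec \in E(S)$, and applying $a \mathrel{\mu} b$ to this idempotent $f$ gives $a^{-1}fa = b^{-1}fb$, hence ${\bf d}(eca) = {\bf d}(ecb)$. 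The right-multiplication case $ac \mathrel{\mu} bc$ is slightly different: here I would compute $(ac)^{-1}e(ac) = c^{-1}(a^{-1}ea)c$ and use $a \mathrel{\mu} b$ directly on the inner idempotent $a^{-1}ea = b^{-1}eb$ to conclude. Idempotent-separation is the easy part: if $e,f \in E(S)$ and $e \mathrel{\mu} f$, then taking the test idempotent to be $e$ itself yields ${\bf d}(ee) = {\bf d}(ef)$, i.e. $e = ef$, so $e \leq f$, and symmetrically $f \leq e$, giving $e = f$.

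For maximality, I would take an arbitrary idempotent-separating congruence $\tau$ and show $\tau \subseteq \mu$. Assume $a \mathrel{\tau} b$. For any $e \in E(S)$, compatibility of congruences with multiplication and inversion gives $a^{-1}ea \mathrel{\tau} b^{-1}eb$; since both of these are idempotents and $\tau$ is idempotent-separating, they must be equal, whence $a \mathrel{\mu} b$. This shows $\mu$ dominates every idempotent-separating congruence, and combined with $\mu$ itself being an idempotent-separating congruence, it is the maximal one.

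The main obstacle I anticipate is the left-multiplication step of the congruence verification, precisely because $ec$ is not an idempotent and so one cannot feed it directly into the defining condition of $\mu$; the resolution is the algebraic identity that conjugation $x \mapsto a^{-1}xa$ sends idempotents to idempotents, letting me reduce the computation back to testing $\mu$ against the genuine idempotent $c^{-1}ec$. Everything else is routine manipulation with the domain and range maps and their elementary identities already established in the excerpt.
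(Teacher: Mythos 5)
Your proof is correct and follows essentially the same route as the paper's: verify the congruence property by reducing ${\bf d}(e\,xa)$ to the defining condition of $\mu$ via the fact that conjugates $c^{-1}ec$ of idempotents are idempotents (the paper phrases this through the identity ${\bf d}(xy)={\bf d}({\bf d}(x)y)$, which is the same mechanism), check idempotent-separation by testing against the idempotents themselves, and obtain maximality by applying an arbitrary idempotent-separating congruence to the conjugated idempotents $a^{-1}ea \mathrel{\tau} b^{-1}eb$. The differences (working with $a^{-1}ea$ versus ${\bf d}(ea)$, which the paper's definition of $\mu$ already identifies as equivalent) are purely cosmetic.
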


\begin{proof} It is easy to see that $\mu$ is a congruence: if $a\mathrel{\mu} b$ then ${\bf d}(eac) = {\bf d}({\bf d}(ea)c) = {\bf d}({\bf d}(eb)c) = {\bf d}(ebc)$ for all $c\in S$ and $e\in E(S)$, so that $ac \mathrel{\mu} bc$. Also  ${\bf d}(cea) = {\bf d}({\bf d}(c)ea) = {\bf d}({\bf d}(c)eb) = {\bf d}(ceb)$ for all $c\in S$ and $e\in E(S)$, so that $ca \mathrel{\mu} cb$. 
Suppose $e,f\in E(S)$ and $e\mathrel{\mu} f$. Since $e^{-1}fe = f^{-1}ff$ rewrites to $ef = f$, we see that $f\leq e$. By symmetry, also $e\leq f$, so that $e=f$. Hence $\mu$ is  idempotent-separating.

Let $\rho$ be an idempotent-separating congruence, $a\mathrel{\rho} b$ and $e\in E(S)$. Then $ea \mathrel{\rho} eb$ so that ${\bf d}(ea) \mathrel{\rho} {\bf d}(eb)$ which implies ${\bf d}(ea) = {\bf d}(eb)$ thus $a\mathrel{\mu} b$. Hence $\rho\subseteq \mu$, and $\mu$ is maximal, as needed.
\end{proof}

\begin{definition} {\em (Fundamental inverse semigroups) An inverse semigroup $S$ is called {\em fundamental} provided that $\mu$ is the equality relation.}
\end{definition}

The definition implies that for any inverse semigroup $S$ the quotient $S/\mu$ is fundamental.

The {\em centralizer} $Z(A)$ of a subset $A$ of $S$ is the set of those $s\in S$ which commute with all elements of $A$. Clearly $Z(E(S)) \supseteq E(S)$ for any inverse semigroup $S$.

\begin{proposition} \label{prop:fund} The following statements are equivalent for an inverse semigroup $S$:
\begin{enumerate}
\item $S$ is fundamental.
\item $Z(E(S)) = E(S)$.
\end{enumerate}
\end{proposition}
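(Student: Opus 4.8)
The plan is to prove the two implications separately, noting that the inclusion $E(S)\subseteq Z(E(S))$ already holds in every inverse semigroup (as remarked just before the proposition), so the real content is to detect when $Z(E(S))$ contains nothing beyond the idempotents. In both directions the bridge between the two conditions is the observation that membership in $Z(E(S))$ can be read off from the congruence $\mu$: an element commuting with every idempotent turns out to be $\mu$-related to its own domain idempotent.

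For the implication $(1)\Rightarrow(2)$, I would take $s\in Z(E(S))$ and show directly that $s\mathrel{\mu}{\bf d}(s)$. Indeed, for every $e\in E(S)$ the relation $es=se$ gives $s^{-1}es = s^{-1}se = {\bf d}(s)e = {\bf d}(s)e{\bf d}(s)$, which is exactly the defining equality $s^{-1}es = {\bf d}(s)^{-1}e{\bf d}(s)$ witnessing $s\mathrel{\mu}{\bf d}(s)$ (recall ${\bf d}(s)^{-1}={\bf d}(s)$). Since $S$ is fundamental, $\mu$ is trivial, so $s={\bf d}(s)\in E(S)$, giving $Z(E(S))\subseteq E(S)$. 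This direction is essentially a one-line computation.

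For the converse $(2)\Rightarrow(1)$, I would start from a pair $a\mathrel{\mu}b$ and aim to conclude $a=b$. First, since $\mu$ is idempotent-separating (the preceding lemma) and respects inversion, from $a\mathrel{\mu}b$ one gets ${\bf d}(a)\mathrel{\mu}{\bf d}(b)$ and ${\bf r}(a)\mathrel{\mu}{\bf r}(b)$, hence ${\bf d}(a)={\bf d}(b)=:e$ and ${\bf r}(a)={\bf r}(b)=:f$. The key idea is then to examine the element $u:=ab^{-1}$, for which a routine computation using these equalities gives ${\bf d}(u)={\bf r}(u)=f$. The crucial step is to prove that $u$ commutes with every idempotent, so that $u\in Z(E(S))=E(S)$ by hypothesis; but an idempotent $u$ with ${\bf d}(u)={\bf r}(u)=f$ satisfies $u={\bf d}(u)=f$, and then $ab^{-1}=f=bb^{-1}$ forces $a=a{\bf d}(b)=b$ upon multiplying on the right by $b$.

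The main obstacle is precisely verifying that $u=ab^{-1}$ commutes with all idempotents. Here I would use the defining relation of $\mu$ in the form $b^{-1}gb=a^{-1}ga$ (for $g\in E(S)$) to compute $ugu^{-1}=a(a^{-1}ga)a^{-1}=fgf$, and then strip off the conjugation: multiplying $ugu^{-1}=fgf$ on the right by $u$ and using $u^{-1}u={\bf d}(u)=f$, $fu={\bf r}(u)u=u$, and the commuting of idempotents gives $ugf=gu$, while ${\bf d}(ug)=gu^{-1}ug=fg\leq f$ yields $ugf=ug$, whence $ug=gu$. Keeping the bookkeeping of the domain and range idempotents straight in this manipulation is the only delicate point; everything else reduces to the elementary identities for ${\bf d}$ and ${\bf r}$ recorded earlier.
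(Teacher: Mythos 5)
Your proof is correct. The direction $(1)\Rightarrow(2)$ coincides with the paper's argument: $s\in Z(E(S))$ gives $s\mathrel{\mu}{\bf d}(s)$, and fundamentality forces $s={\bf d}(s)\in E(S)$. In the converse direction both proofs exploit the same mechanism---manufacture from a $\mu$-pair an element commuting with every idempotent, so that the hypothesis $Z(E(S))=E(S)$ forces it to be idempotent---but the decompositions differ. The paper first proves the special case that $a\mathrel{\mu}e$ with $e\in E(S)$ implies $a=e$, where the commutation $fa=af$ falls out of multiplying $a^{-1}fa={\bf d}(a)f$ by $a$ on the left; the general case $a\mathrel{\mu}b$ is then reduced to this special case by applying the congruence property to get $a^{-1}a\mathrel{\mu}a^{-1}b$, whence $a^{-1}b=a^{-1}a$, so $a=aa^{-1}b\leq b$, and equality follows by the symmetric inequality. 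You instead work in one pass with $u=ab^{-1}$: after identifying ${\bf d}(a)={\bf d}(b)$ and ${\bf r}(a)={\bf r}(b)=f$ (legitimate, since $\mu$ is an idempotent-separating congruence and congruences on inverse semigroups respect inversion, a fact the paper itself uses), you verify $ugu^{-1}=fgf$ straight from the defining relation of $\mu$ and strip the conjugation to get $ug=gu$. Your bookkeeping there is sound, though the last step can be shortened: $ugf=ufg=ug$ is immediate from $uf=u{\bf d}(u)=u$, with no need to compute ${\bf d}(ug)$. As for what each route buys: the paper's computations are lighter and its special case (elements $\mu$-related to idempotents are idempotents) has independent interest, but it needs the two-stage reduction plus a final order-theoretic symmetry argument; your version concludes $a=b$ outright without any appeal to symmetry, at the cost of heavier conjugation bookkeeping and the preliminary identification of domain and range idempotents.
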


\begin{proof} Suppose that $S$ is fundamental and let $a\in Z(E(S))$. Then for each $e\in E(S)$ we have $a^{-1}ea = {\bf d}(a)e$ which yields $a\mathrel{\mu} {\bf d}(a)$. It follows $a={\bf d}(a)\in E(S)$.

Conversely, suppose that $Z(E(S)) = E(S)$. Let first $a \mathrel{\mu} e$ with $e\in E(S)$. Then ${\bf d}(a) \mathrel{\mu} e$, so that ${\bf d}(a) = e$. Since $a\mathrel{\mu} {\bf d}(a)$, for any $f\in E(S)$ we have $a^{-1}fa = {\bf d}(a) f$, so, multiplying by $a$ from the left, we get ${\bf r}(a)fa = a{\bf d}(a) f$, which rewrites to $fa = af$. The assumption yields $a\in E(S)$, so that $a={\bf d}(a)$. Let now $a\mathrel{\mu} b$. Then $a^{-1}a \mathrel{\mu} a^{-1}b$, which implies $a = aa^{-1}a = aa^{-1} b \leq b$. By symmetry, we also have $b\leq a$, so that $a=b$, and $\mu$ is the identity relation.
\end{proof}

\section{Boolean inverse semigroups} \label{s:boolean}
\subsection{Generalized Boolean algebras} 
We will always consider lattices possessing a bottom element $0$ but not necessarily a top element.
A lattice $L$ is called {\em distributive} if it satisfies the distributivity identity $x\wedge (y\vee z) = (x\wedge y)\vee (x\wedge z)$ or the equivalent identity $x\vee (y\wedge z) = (x\vee y)\wedge (x\vee z)$.
A {\em generalized Boolean algebra} is a relatively complemented distributive lattice. This means that for any elements $a,b$ such that $a\leq b$ there is a (necessarily unique) {\em relative complement} of $a$ with respect to $b$, that is, an element $b\setminus a$, satisfying $a \vee (b\setminus a) = b$ and $a\wedge (b\setminus a) = 0$. 

\subsection{Joins in inverse semigroups}
When discussing joins  (and meets) in inverse semigroups, we always consider them with respect to the natural partial order.
Let $S$ be an inverse semigroup and suppose that $a,b\in S$ are such that $a\vee b$ exists in $S$. Since $a,b\leq a\vee b$ it follows that $a$ and $b$ are necessarily compatible.

\begin{proposition}\label{prop:join} \cite[Proposition 17, Section 1.4]{Lawson_book} Suppose that $a\sim b$ and $a\vee b$ exists in $S$. Then  ${\bf d}(a)\vee {\bf d}(b)$ exists in $S$ and ${\bf d}(a\vee b) = {\bf d}(a)\vee {\bf d}(b)$.
\end{proposition}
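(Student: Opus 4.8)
The plan is to set $c=a\vee b$ and to show that $\mathbf{d}(c)$ is precisely the least upper bound of $\mathbf{d}(a)$ and $\mathbf{d}(b)$ in $S$. That $\mathbf{d}(c)$ is an upper bound is immediate: since $a,b\le c$, the monotonicity of $\mathbf{d}$ with respect to the natural partial order gives $\mathbf{d}(a),\mathbf{d}(b)\le\mathbf{d}(c)$. Writing $e_1=\mathbf{d}(a)$, $e_2=\mathbf{d}(b)$ and $g=\mathbf{d}(c)$, everything reduces to proving $g\le u$ for an arbitrary upper bound $u\in S$ of $e_1$ and $e_2$.

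I would first dispose of the idempotent part of the bound. Since $e_i\le u$ forces $e_i\le\mathbf{d}(u)$, the idempotent $\mathbf{d}(u)$ is itself an upper bound of $e_1,e_2$; as $\mathbf{d}(u)$ is idempotent we have $c\,\mathbf{d}(u)\le c$, while $a=ce_1=c\,\mathbf{d}(u)\,e_1$ (and similarly for $b$) shows $a,b\le c\,\mathbf{d}(u)$, whence $c\le c\,\mathbf{d}(u)$ by minimality of the join. Therefore $c\,\mathbf{d}(u)=c$, and taking domains via $\mathbf{d}(xy)=\mathbf{d}(\mathbf{d}(x)y)$ (and that an idempotent is its own domain) yields $g=g\,\mathbf{d}(u)$, i.e.\ $g\le\mathbf{d}(u)$. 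In particular $\mathbf{d}(ug)=\mathbf{d}(u)\,g=g$, so $ug$ is a genuine ``restriction of $u$ to $g$''.

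The main obstacle is that $u$ itself need not be idempotent, so the preceding trick does not close: replacing $\mathbf{d}(u)$ by $u$ gives only $c\le cu$, not equality. To extract the bound I would exploit both sides. Since $e_i\le u$ and $e_i\le u^{-1}$ (inversion preserves the order and idempotents are self-inverse), we get $ue_i=u^{-1}e_i=e_i$, whence $a,b\le cu$ and $a,b\le cu^{-1}$, so that $c\le cu$ and $c\le cu^{-1}$. The characterization $x\le y\Leftrightarrow x=y\,\mathbf{d}(x)$ rewrites these as $c=cug$ and $c=cu^{-1}g$; multiplying on the left by $c^{-1}$ and using $c^{-1}c=g$ produces the two identities $gug=g$ and $gu^{-1}g=g$.

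It remains to convert these into $ug=g$. The endgame I have in mind is: from $(ug)^{-1}g=gu^{-1}g=g$ and $(ug)^{-1}(ug)=\mathbf{d}(ug)=g$ we get $(ug)^{-1}g=(ug)^{-1}(ug)$; left-multiplying by $ug$ and simplifying with $xx^{-1}x=x$ yields $\mathbf{r}(ug)\,g=ug$, i.e.\ $ug=\mathbf{r}(ug)\,g$, which by $x\le y\Leftrightarrow x=\mathbf{r}(x)y$ means $ug\le g$. Finally $ug\le g$ together with $\mathbf{d}(ug)=g$ gives $ug=g\,\mathbf{d}(ug)=g$, that is $g\le u$. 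This shows $g=\mathbf{d}(c)$ is the least upper bound, so $\mathbf{d}(a)\vee\mathbf{d}(b)$ exists and equals $\mathbf{d}(a\vee b)$. Note that compatibility $a\sim b$ enters only through the hypothesis that $a\vee b$ exists, which is all that is actually used.
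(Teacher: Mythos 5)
Your proof is correct: every step checks against the characterizations $x\le y\Leftrightarrow x=y\,\mathbf{d}(x)\Leftrightarrow x=\mathbf{r}(x)\,y$, and the skeleton is the same as the paper's — show $\mathbf{d}(c)$ (with $c=a\vee b$) is an upper bound by monotonicity of $\mathbf{d}$, then take an arbitrary upper bound $u$ of $\mathbf{d}(a),\mathbf{d}(b)$, deduce $a,b\le cu$, and use leastness of the join to get $c\le cu$. Where you genuinely diverge is the endgame, and there the paper is much shorter: from $c\le c\gamma$ it concludes immediately that $c=\mathbf{r}(c)\,c\gamma=(\mathbf{r}(c)c)\gamma=c\gamma$ — the inequality self-improves to an equality because $\mathbf{r}(c)$ is absorbed by $c$ — and then $\mathbf{d}(c)=c^{-1}c\gamma=\mathbf{d}(c)\gamma\le\gamma$. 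Missing this absorption trick, you instead run the join argument twice (for $u$ and for $u^{-1}$) to obtain $gug=g$ and $gu^{-1}g=g$, insert a preliminary reduction $g\le\mathbf{d}(u)$ so that $\mathbf{d}(ug)=g$, and finish with a computation on $(ug)^{-1}$ giving $ug=g$. All of this is valid, but it buys nothing extra — no added generality — at roughly three times the length; it is essentially a workaround for the one-line observation that $c\le cu$ already forces $c=cu$. Two smaller remarks: your intermediate step $g\le\mathbf{d}(u)$ can be extracted from $gug=g$ alone, since Lemma \ref{lem:dom1} gives $g=\mathbf{d}(gug)\le\mathbf{d}(ug)\le\mathbf{d}(g)=g$, so the detour through $\mathbf{d}(u)$ is avoidable even within your scheme; and your closing observation that compatibility enters only through the existence of $a\vee b$ is consistent with the paper, which proves the same statement using only that hypothesis and notes separately that existence of an upper bound forces $a\sim b$.
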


\begin{proof} Let $c = a\vee b$. Since $a,b\leq c$, we have ${\bf d}(a), {\bf d}(b) \leq {\bf d}(c)$, so ${\bf d}(c)$  is an upper bound of ${\bf d}(a)$ and  ${\bf d}(b)$. To show that it is the least upper bound we suppose that ${\bf d}(a), {\bf d}(b) \leq \gamma$ and aim to show that ${\bf d}(c)\leq \gamma$. Observe that $a = a{\bf d}(a) \leq a\gamma \leq c\gamma$ and similarly $b\leq c\gamma$. We have $c\leq c\gamma$, so that $c={\bf r}(c) c\gamma = c\gamma$, thus ${\bf d}(c) = c^{-1}c = c^{-1}c\gamma = {\bf d}(c)\gamma \leq \gamma$.
\end{proof}

Note that the join ${\bf d}(a) \vee {\bf d}(b) = {\bf d}(a) \vee_S {\bf d}(b)$ in the above proposition is taken in $S$ and one may wonder if the join ${\bf d}(a) \vee_{E(S)}{\bf d}(b)$ exists. This is answered in the next lemma.

\begin{lemma} \label{lem:join} Let $e,f\in E(S)$.  If  $e \vee_S f$ exists, so does $e\vee_{E(S)} f$ and $e\vee_{E(S)} f = e \vee_S f$. 
\end{lemma}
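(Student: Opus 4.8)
The plan is to set $g = e \vee_S f$ and reduce the entire statement to a single point: that $g$ is itself an idempotent. Once that is established, both halves of the conclusion follow almost immediately, since the order on $E(S)$ is just the restriction of the natural partial order on $S$.

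First I would record the elementary facts to be used. Since $e$ and $f$ are idempotents they are automatically compatible (any two idempotents are compatible), and for any idempotent $e$ one has $e^{-1} = e$ and hence ${\bf d}(e) = e^{-1}e = e$; likewise ${\bf d}(f) = f$. Thus the hypotheses of Proposition~\ref{prop:join} are satisfied with $a = e$ and $b = f$, the join $e \vee_S f = g$ existing by assumption.

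The main step is to show $g \in E(S)$, and I expect this to be the only substantive point of the argument. Applying Proposition~\ref{prop:join} gives ${\bf d}(g) = {\bf d}(e) \vee_S {\bf d}(f) = e \vee_S f = g$. Because ${\bf d}(g) = g^{-1}g$ is an idempotent for every element of $S$, this equality forces $g$ to be idempotent. This is the crux; the remainder is purely formal.

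Finally I would transfer the universal property from $S$ to $E(S)$. Since $g \in E(S)$ and $e, f \leq g$, the element $g$ is an upper bound of $e$ and $f$ within the semilattice $E(S)$. If $h \in E(S)$ is any upper bound of $e$ and $f$, then $h$ is in particular an upper bound in $S$, so $g \leq h$ because $g$ is the least upper bound of $e$ and $f$ in $S$. Hence $g$ is the least upper bound of $e$ and $f$ in $E(S)$, which says precisely that $e \vee_{E(S)} f$ exists and equals $e \vee_S f = g$, as required.
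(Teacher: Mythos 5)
Your proof is correct and follows essentially the same route as the paper's: both establish via Proposition~\ref{prop:join} that ${\bf d}(e\vee_S f) = {\bf d}(e)\vee_S{\bf d}(f) = e\vee_S f$, conclude that the join is an idempotent, and then transfer the least-upper-bound property to $E(S)$ by noting that any upper bound in $E(S)$ is an upper bound in $S$. No gaps; nothing further is needed.
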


\begin{proof} Supose that $e\vee_S f$ exists and equals $s$. We show that $s$ is an idempotent. By Proposition \ref{prop:join} we have ${\bf d}(e) \vee_S {\bf d}(f) = {\bf d}(e\vee_S f)$. But $e$ and $f$ are idempotents, so $e = {\bf d}(e)$ and $f={\bf d}(f)$. Hence $e\vee_S f= {\bf d}(e\vee_S f)$. Since the element on the right-hand side is an idempotent, so is the element on the left-hand side, that is, $e\vee_S f\in E(S)$. We have that $e\vee_S f$ is the upper bound of $e$ and $f$ in $E(S)$. If $p\in E(S)$ is another their upper bound, then $p$ is also their upper bound in $S$, thus $e\vee_S f\leq p$, which finishes the proof.
\end{proof}

We remark, however, that for $e,f\in E(S)$ the existence of $e\vee_{E(S)} f$ does not imply the existence of $e\vee_S f$. This is shown by the following example.

\begin{example}\label{ex:a}
{\em Let $E = \{0,a,b,1\}$ be a $4$-element Boolean algebra with atoms $a$ and $b$, and let $u\not\in E$. On $S=E\cup \{u\}$ we define the multiplication by $u^2=1$ and so that any product involving $u$ and any of the elements $0,a,b$ is equal to the same product where $u$ is replaced by $1$,  for example, $au=a$. Then $S$ is an inverse semigroup with $E(S)=E$ and is a union of groups. We claim that the join $a\vee_S b$ does not exist, despite the fact that $a\vee_{E(S)} b$ exists (and equals $1$). We have  $1\geq a,b$ and also $u\geq a,b$ because $a= ua = u{\bf d}(a)$ and $b=ub =  u{\bf d}(b)$. So $a$ and $b$ have two common upper bounds in $S$. Since $1\not \geq u$ and $u\not\geq 1$, neither of them is a join of $a$ and $b$. So $a\vee_S b$ does not exist.}
\end{example}

\subsection{Boolean inverse semigroups and their first properties}\label{subs:bis}
The restriction of the natural partial order $\leq$ to $E(S)$ will be denoted by the same symbol $\leq$.
If an inverse semigroup has a zero element, $0$, this element is the minimum one with respect to the natural partial order. Indeed, $0=s\cdot 0 \leq s$ for all $s\in S$.
In particular, $0$ is the minimum idempotent. (However, if $S$ has a minimum idempotent, it need not have a zero element. This happens if $S$ is a group of order at least $2$.)
\begin{definition} \label{def:bis} (Boolean inverse semigroups) \label{def:bool} {\em A {\em Boolean inverse semigroup} is an inverse semigroup with zero which satisfies the following two axioms:
 \begin{enumerate}
 \item[(BIS1)] $(E(S),\leq)$ is a generalized Boolean algebra;
 \item[(BIS2)] For any compatible elements $a,b\in S$ there is their join $a\vee b$ in $S$. 
 \end{enumerate}
  }
 \end{definition}
 
Note that the existence of the zero in $S$ does not follow from (BIS1) and (BIS2). Indeed, any group $G$ satisfies these two axioms, in particular (BIS2) holds vacuously as no pair of distinct elements is compatible. But if $|G|\geq 2$, $G$ does not have a zero. In what follows we always assume that all inverse semigroups have a zero, and we denote it by $0$.

Applying induction, it is easy to show that (BIS2) implies that joins exist for any finite sets of pairwise compatible elements. Furthermore, in a Boolean inverse semigroup the multiplication distributes over binary (and thus also any finite) non-empty compatible joins in that $a(b\vee c) = ab\vee ac$ and $(b\vee c)a = ba\vee ca$ for any $a,b,c\in S$ with $b\sim c$ (see \cite[Proposition 1.4.20]{Lawson_book} and \cite[Proposition 3.1.9]{W17}). Moreover, $b\perp c$ yields $ab\perp ac$ and $ba \perp ca$, so that we have $a(b\oplus c) = ab \oplus ac$ and $(b\oplus c)a = ba\oplus ca$. The following axiom, which is clearly a consequence of (BIS2), is in fact equivalent to (BIS2).

\begin{enumerate}
\item[(BIS2a)] For any orthogonal elements $a,b\in S$ there is their join $a\oplus b$ in $S$.
\end{enumerate}

\begin{lemma}
In Definition \ref{def:bool} condition (BIS2) can be replaced by (BIS2a).
\end{lemma}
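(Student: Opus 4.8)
The plan is to show that (BIS2a) together with (BIS1) implies (BIS2), since the converse is immediate (orthogonal elements are compatible, so their join exists by (BIS2)). So I assume $S$ is an inverse semigroup with zero satisfying (BIS1) and (BIS2a), and I take arbitrary compatible elements $a,b\in S$; the goal is to produce $a\vee b$ in $S$. The natural strategy is to split $b$ into a part already ``absorbed'' by $a$ and an orthogonal remainder, so that the join reduces to an orthogonal join handled by (BIS2a).

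Concretely, first I would locate where $a$ and $b$ overlap by looking at their domain idempotents. Since (BIS1) makes $(E(S),\leq)$ a generalized Boolean algebra, the relative complement ${\bf d}(b)\setminus {\bf d}(a)$ exists; set $f = {\bf d}(b)\setminus({\bf d}(a)\wedge {\bf d}(b))$ (equivalently $f = {\bf d}(b)\setminus {\bf d}(a)$ computed inside the Boolean algebra), and put $b' = bf$. Then $b' \leq b$, so $b'\sim a$ (compatibility is inherited downward, since restrictions of compatible elements remain compatible — this follows from Lemma~\ref{lem:comp}), and by construction ${\bf d}(b') = f \leq {\bf d}(b)\setminus {\bf d}(a)$, so ${\bf d}(b')\perp {\bf d}(a)$. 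I would next verify, using the compatibility relations of Lemma~\ref{lem:comp} (namely ${\bf r}(a)b = {\bf r}(b)a$ and $b{\bf d}(a) = a{\bf d}(b)$), that ${\bf r}(b')\perp {\bf r}(a)$ as well, so that by Lemma~\ref{lem:ort}(3) we get $a\perp b'$. Then (BIS2a) furnishes the orthogonal join $a\oplus b'$.

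The remaining step is to check that $a\oplus b'$ is in fact $a\vee b$, i.e.\ that the part of $b$ I discarded, $b\,{\bf d}(a)$, lies below $a$ and hence below $a\oplus b'$. Here the compatibility identity $b{\bf d}(a) = a{\bf d}(b) = a({\bf d}(a)\wedge{\bf d}(b)) \leq a$ does the work: it shows $b{\bf d}(a)\leq a \leq a\oplus b'$, while $b' = b f \leq a\oplus b'$, and since ${\bf d}(b) = {\bf d}(a)\wedge{\bf d}(b)\;\oplus\; f$ we recover $b = b\,{\bf d}(a) \vee b' \leq a\oplus b'$. Thus $a\oplus b'$ is an upper bound of both $a$ and $b$. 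To see it is the \emph{least} upper bound, I would take any common upper bound $c$ of $a$ and $b$, observe $b' = bf \leq c$ and $a\leq c$, so $c$ bounds both orthogonal summands and therefore $a\oplus b' = a\vee b' \leq c$ by the definition of the orthogonal join.

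The main obstacle I anticipate is the bookkeeping needed to confirm $a\perp b'$ rather than merely $a\sim b'$: arranging ${\bf d}(b')\perp{\bf d}(a)$ is easy by fiat through the relative complement, but deducing the \emph{range} orthogonality ${\bf r}(a)\perp{\bf r}(b')$ requires translating the domain-level cut through the compatibility equation $b{\bf d}(a)=a{\bf d}(b)$ and its range-side companion, and then invoking Lemma~\ref{lem:ort}. Care is also required to ensure that the relative complement is formed inside $E(S)$ as a generalized Boolean algebra and that the resulting idempotent genuinely splits ${\bf d}(b)$ orthogonally; once those two points are pinned down, the verification that $a\oplus b'$ is the join is routine given the downward-closure of compatibility and the characterizations in Lemmas~\ref{lem:comp} and~\ref{lem:ort}.
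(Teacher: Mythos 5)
Your proof is correct and takes essentially the same route as the paper's: both use (BIS1) to form relative complements of domain idempotents and thereby reduce the compatible join $a\vee b$ to an orthogonal join supplied by (BIS2a), verified via Lemmas~\ref{lem:comp} and~\ref{lem:ort}. The only cosmetic difference is that you keep $a$ whole and split off $b' = b({\bf d}(b)\setminus{\bf d}(a))$, whereas the paper splits both elements symmetrically as $a = (a\wedge b)\oplus c$ and $b = (a\wedge b)\oplus d$ and forms $(a\wedge b)\oplus c\oplus d$; since $(a\wedge b)\oplus c = a$ and $d = b'$, this is literally the same element $a\oplus b'$.
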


\begin{proof}
We suppose that (BIS1) and (BIS2a) hold and show that (BIS2) holds. Let $a\sim b$. Then $a{\bf d}(b) = b{\bf d}(a) = a\wedge b$, so that $a = (a\wedge b) \oplus c$ where $c=a({\bf d}(a)\setminus {\bf d}(b))$. Similarly $b= (a\wedge b) \oplus d$ where $d=b({\bf d}(b)\setminus {\bf d}(a))$. Note that $c\perp d$. By (BIS2a) the join $s=(a\wedge b) \oplus c \oplus d$ exists. We have $s\geq a,b$ and if $t\geq a,b$ then $t\geq (a\wedge b), c,d$ so that $t\geq s$ which proves that $s=a\vee b$.
\end{proof} 

\begin{definition} {\em (Additive homomorphisms) A semigroup homomorphism $\varphi\colon S\to T$ between Boolean inverse semigroups is called {\em additive} provided that $\varphi(s\oplus t) = \varphi(s)\oplus \varphi(t)$ for any orthogonal elements $s,t\in S$.}
\end{definition}

Let $S$ and $T$ be Boolean inverse semigroups. Note that if $s$ and $t$ are compatible then so are $\varphi(s)$ and $\varphi(t)$ for any semigroup homomorphism $\varphi\colon S\to T$. In addition, using $\varphi(0)=0$, it follows that the orthogonality is also preserved.
It is easy to see that a homomorphism $\varphi\colon S\to T$ is additive if and only $\varphi(s\vee t) = \varphi(s)\vee \varphi(t)$ for any compatible $s$ and $t$.  
For $u\leq s$ we put $s\setminus u=s({\bf d}(s)\setminus {\bf d}(u))$ and note that $s\setminus u$ is the only element satisfying $s = (s\setminus u) \oplus (s\wedge u)$. Put differently, the domain map $u\mapsto {\bf d}(u)$ is the Boolean algebra isomorphism between the Boolean algebras $s^{\downarrow}$ and ${\bf d}(s)^{\downarrow}$ whose inverse is given by $e\mapsto se$, and $s\setminus u$ is the inverse image under this isomorphism of ${\bf d}(s) \setminus {\bf d}(u)$. 

Wehrung showed \cite{W17} that Boolean inverse semigroups, considered as algebraic structures in the extended signature $(\cdot, ^{-1}, \obslash,\triangledown)$, where the binary operations $\obslash$ and $\triangledown$ on a Boolean inverse semigroup are defined by $a\obslash b = ({\bf r}(a)\setminus {\bf r}(b)) a ({\bf d}(a)\setminus {\bf d}(b))$ and $a\triangledown b= (a\obslash b) \oplus b$, called the {\em skew difference} and the {\em left skew join}, respectively, form a variety of algebras (these algebras are called biases in \cite{W17}) and that homomorphisms of Boolean inverse semigroups considered 
in the extended signature are precisely additive homomophrisms. By an {\em additive congruence} on $S$ we mean the kernel of an additive homomorphism, that is, it is an equivalence relation on $S$ which a semigroup congruence and also respects the operations, $\obslash$ and $\triangledown$. It follows from \cite[Proposition 3.4.1]{W17} that a semigroup congruence $\sigma$ is additive if and only if for all orthogonal $e,f\in E(S)$ the join of $[e]$ and $[f]$ exists in $S/\sigma$ and equals $[e\oplus f]$.

\subsection{Fundamental Boolean inverse semigroups} 
The following is \cite[Proposition 3.4.5]{W17} but we provide an alternative short proof, for the reader's convenience.

\begin{proposition}\label{prop:mu} Let $S$ be a Boolean inverse semigroup. Then $\mu$ is an additive congruence. That is, $S/\mu$ is a Boolean inverse semigroup and the canonical projection $\mu^{\natural}$ is an additive homomorphism.
\end{proposition}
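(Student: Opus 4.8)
The plan is to verify that $\mu$ is an additive congruence by using the characterization given just before the statement: a semigroup congruence $\sigma$ is additive if and only if for all orthogonal $e,f\in E(S)$ the join of $[e]$ and $[f]$ in $S/\sigma$ exists and equals $[e\oplus f]$. Since we already know from the earlier lemma that $\mu$ is a semigroup congruence, the entire task reduces to checking this one condition for the idempotents. So first I would fix orthogonal idempotents $e,f\in E(S)$, form $e\oplus f$, and show that $[e\oplus f]$ is the join of $[e]$ and $[f]$ in the quotient $S/\mu$.

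The class $[e\oplus f]$ is certainly an upper bound of $[e]$ and $[f]$, since $e,f\leq e\oplus f$ and congruence projections are order-preserving. The real work is to show it is the \emph{least} upper bound. So I would take an arbitrary element $a\in S$ with $[e],[f]\leq [a]$ in $S/\mu$ and aim to prove $[e\oplus f]\leq [a]$. The order on the quotient $S/\mu$ is the natural partial order of the inverse semigroup $S/\mu$, and $[e],[f]$ are idempotents there; the inequality $[e]\leq [a]$ means $[e] = [a][e] = [ae]$ (using that $[e]$ is idempotent and $[e]\leq [a]$ iff $[e]={\bf r}([e])[a]$, i.e. $[e]=[ea]=[ae]$ after noting $[e]$ commutes appropriately). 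The key point I would exploit is the defining formula for $\mu$: $x\mathrel{\mu} y$ iff ${\bf d}(gx)={\bf d}(gy)$ for all $g\in E(S)$. This gives an explicit, computational handle on when two classes coincide, in contrast to an abstract appeal to the quotient order.

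The main obstacle I anticipate is translating the hypothesis $[e]\leq[a]$ and $[f]\leq[a]$ back into usable statements about $S$ itself, and then combining them to force $[e\oplus f]\leq [a]$. The cleanest route is likely the following: replacing $a$ by $a$ idempotent won't be automatic, but I can reduce to idempotent upper bounds by passing to ${\bf d}(a)$ or an appropriate idempotent, since for an idempotent class $[e]$ the relation $[e]\leq[a]$ constrains $a$ through the $\mu$-formula. Concretely, I would show that $[e]\leq[a]$ forces, for every $g\in E(S)$, the equality ${\bf d}(ge)={\bf d}(g\cdot{\bf r}(a)e)$ or a comparable identity, and similarly for $f$; then use orthogonality of $e$ and $f$ together with distributivity of multiplication over orthogonal joins (which holds in $S$ by the remarks following the definition of Boolean inverse semigroups) to compute ${\bf d}(g(e\oplus f))$ and match it against the upper bound. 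The algebraic identity ${\bf d}(g(e\oplus f)) = {\bf d}(ge)\oplus{\bf d}(gf)$, valid because $e\perp f$ implies $ge\perp gf$, should let the two separate dominations for $e$ and $f$ assemble into a single domination for $e\oplus f$.

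Once the join condition for orthogonal idempotents is established, the characterization quoted from \cite[Proposition 3.4.1]{W17} immediately yields that $\mu$ is additive, hence $S/\mu$ inherits the structure of a Boolean inverse semigroup and the projection $\mu^{\natural}$ is an additive homomorphism, completing the proof. I would phrase the final step as a direct invocation of that characterization rather than re-deriving the Boolean structure on the quotient by hand.
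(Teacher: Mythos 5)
Your proposal is correct and follows essentially the same route as the paper: fix an upper bound $[a]$ of the two classes, translate $[e]\leq[a]$ into the $\mu$-identities ${\bf d}(ge)={\bf d}(gae)$ for all $g\in E(S)$, and assemble them via distributivity over orthogonal joins and ${\bf d}(x\oplus y)={\bf d}(x)\oplus{\bf d}(y)$ into ${\bf d}(g(e\oplus f))={\bf d}(ga(e\oplus f))$, concluding by the quoted characterization of additive congruences. The only difference is one of scope: the paper runs the identical computation for arbitrary orthogonal classes $[a]\perp[b]$ (first lifting orthogonality to representatives via idempotent-separation), whereas you restrict to idempotents, which the characterization makes sufficient.
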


\begin{proof} We denote the elements of $S/\mu$ by $[a] = \{a' \in S\colon a\mathrel{\mu} a'\}$. Let $[a], [b] \in S/\mu$ and $[a]\perp [b]$. Then $[0] = [a][b] = [ab]$. So $ab \mathrel{\mu} 0$,
thus ${\bf d}(ab) \mathrel{\mu} 0$, whence ${\bf d}(ab) = {\bf d}(0) = 0$ (as $\mu$ is idempotent-separating), so that $ab=0$. So $a' \perp b'$ for all $a' \in [a]$ and $b'\in [b]$. Let us check that $[a] \oplus [b]$ exists in $S/\mu$ and equals $[a\oplus b]$. We suppose that $[x] \geq [a],[b]$ and show that $[x]\geq [a\oplus b]$. We have that $[a] = [x]{\bf d}([a])=[x{\bf d}(a)]$ and similarly $[b] = [x{\bf d}(b)]$. By the definition of $\mu$ this means that for all $e\in E(S)$ we have ${\bf d}(ea) = {\bf d}(ex{\bf d}(a))$ and ${\bf d}(eb) = {\bf d}(ex{\bf d}(b))$. Then 
 ${\bf d}(e(a\oplus b)) = {\bf d}(ea \oplus eb) = {\bf d}(ea) \oplus {\bf d}(eb)$
and also ${\bf d}(ex{\bf d}(a\oplus b)) = {\bf d}(ex{\bf d}(a)\oplus ex{\bf d}(b)) = {\bf d}(ex{\bf d}(a)) \oplus  {\bf d}(ex{\bf d}(b))$. Therefore, $[a\oplus b] = [x{\bf d}(a\oplus b)]$, so that $[x]\geq [a\oplus b]$, as required. It follows that $S/\mu$ is a Boolean inverse semigroup and that $\mu^{\natural}$ preserves orthogonal joins.
\end{proof}

\subsection{Additive ideals and simple Boolean inverse semigroups}
An {\em additive ideal} of a Boolean inverse semigroup $S$ is a semigroup ideal which is closed with respect to binary orthogonal (or, equivalently, compatible) joins. An ideal $I$ of the generalized Boolean algebra $E(S)$ is called {\em closed with respect to conjugation} if $e\in I$ and $s\in S$ imply that $s^{-1}es\in I$. Additive ideals of $S$ and ideals of $E(S)$ are closely connected, as follows (see \cite[Proposition 3.4.8]{W17}).

\begin{proposition} \label{prop:ideals}\mbox{}
\begin{enumerate}
\item Let $I$ be an additive ideal of $S$. Then ${\bf d}(I) = \{{\bf d}(a)\colon a\in I\}$ is an ideal of the generalized Boolean algebra $E(S)$ which is closed with respect to conjugation.
\item Let $J$ be an ideal of $E(S)$ which is closed with respect to conjugation. Then $I(J) = \{s\in S\colon {\bf d}(s) \in J\}$ is an additive ideal of $S$.
\item The assignment $I\mapsto {\bf d}(I)$ is a bijection between additive ideals of $S$ and ideals of $E(S)$ which are closed with respect to conjutation.
\end{enumerate}
\end{proposition}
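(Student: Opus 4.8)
The plan is to exploit the fact that an additive ideal of $S$ is completely determined by the idempotents it contains. The starting observation is that any semigroup ideal $I$ of an inverse semigroup is automatically closed under inverses: if $a\in I$ then $a^{-1}a = a^{-1}\cdot a\in I$ and hence $a^{-1} = (a^{-1}a)a^{-1}\in I$. Consequently ${\bf d}(a) = a^{-1}a\in I$ for every $a\in I$, and conversely every idempotent $e\in I\cap E(S)$ equals ${\bf d}(e)$; hence ${\bf d}(I) = I\cap E(S)$. I would record this identity first, as it drives all three parts.

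For part (1), I would verify the three closure properties of ${\bf d}(I) = I\cap E(S)$ directly. For downward closure, if $f\leq {\bf d}(a)$ with $a\in I$ and $f\in E(S)$, then $af\in I$ and a short computation gives ${\bf d}(af) = f{\bf d}(a)f = f$, so $f\in {\bf d}(I)$. For closure under joins, given $e,f\in {\bf d}(I)$, these idempotents are compatible, so $e\vee_S f$ exists by (BIS2) and lies in $I$ because $I$ is additive; by Lemma \ref{lem:join} this join coincides with $e\vee_{E(S)} f$, which therefore lies in $I\cap E(S) = {\bf d}(I)$. For closure under conjugation, given $e\in {\bf d}(I)\subseteq I$ and $s\in S$, the element $s^{-1}es$ is an idempotent (as observed earlier) and lies in $SIS\subseteq I$, hence in $I\cap E(S) = {\bf d}(I)$.

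For part (2), with $J$ a conjugation-closed ideal of $E(S)$ and $I(J) = \{s\in S: {\bf d}(s)\in J\}$, I would check that $I(J)$ is a two-sided ideal closed under orthogonal joins. For left multiplication, Lemma \ref{lem:dom1} gives ${\bf d}(ts)\leq {\bf d}(s)\in J$, and $J$ is downward closed, so $ts\in I(J)$. For right multiplication, ${\bf d}(st) = t^{-1}{\bf d}(s)t\in J$ precisely because $J$ is conjugation-closed; this is the step where that hypothesis is essential. For orthogonal joins, if $s,t\in I(J)$ with $s\perp t$, then by Proposition \ref{prop:join} (together with Lemma \ref{lem:join}) one has ${\bf d}(s\oplus t) = {\bf d}(s)\vee {\bf d}(t)\in J$, since $J$ is closed under joins; thus $s\oplus t\in I(J)$.

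Finally, for part (3) I would show that $I\mapsto {\bf d}(I)$ and $J\mapsto I(J)$ are mutually inverse, using parts (1) and (2) to guarantee these maps land in the correct sets. The equality ${\bf d}(I(J)) = J$ is immediate from the definitions. For $I(\,{\bf d}(I)\,) = I$ I would invoke the identity ${\bf d}(I) = I\cap E(S)$ from the first paragraph: if ${\bf d}(s)\in {\bf d}(I)\subseteq I$ then $s = s{\bf d}(s)\in I$, giving $I(\,{\bf d}(I)\,)\subseteq I$, while the reverse inclusion is clear. I do not anticipate a serious obstacle; the only point requiring care is the join-closure in part (1), where one must ensure that the join taken in $S$ actually returns to $E(S)$ and to $I$ — this is exactly what Lemma \ref{lem:join} and the additivity of $I$ provide.
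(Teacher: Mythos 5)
Your proof is correct. There is, in fact, nothing in the paper to compare it against: the paper states Proposition \ref{prop:ideals} without proof, citing \cite[Proposition 3.4.8]{W17}, so your proposal fills a genuine gap using only the paper's own tools. The organizing identity ${\bf d}(I) = I\cap E(S)$ (obtained from the observation that a semigroup ideal of an inverse semigroup is closed under inversion) is exactly the right reduction, and every subsequent step checks out: in part (1), downward closure and conjugation closure of $I\cap E(S)$ are verified correctly, and the join-closure step correctly combines the additivity of $I$ with Lemma \ref{lem:join} to guarantee that the join taken in $S$ is the join in $E(S)$ and stays inside $I$; in part (2) you correctly isolate where each hypothesis is used — Lemma \ref{lem:dom1} for left multiplication, conjugation-closure of $J$ for right multiplication (via ${\bf d}(st) = t^{-1}{\bf d}(s)t$), and Proposition \ref{prop:join} with Lemma \ref{lem:join} for orthogonal joins; part (3) is then immediate, as you say, since ${\bf d}(I(J)) = I(J)\cap E(S) = J$ and $s = s\,{\bf d}(s)$ recovers $I$ from ${\bf d}(I)$. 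One cosmetic simplification: in the downward-closure step of part (1) you can avoid the computation of ${\bf d}(af)$ altogether, since $f \leq {\bf d}(a)$ gives $f = {\bf d}(a)f \in I$ directly, ${\bf d}(a)$ being an element of the ideal $I$; but your version is equally valid.
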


Similarly as in the ring theory, additive ideals give rise to congruences, as follows. For an additive ideal $I$ by $\varepsilon_I$ we denote the minimal congruence which identifies all the elements of $I$. If $c\in I$ and $a,b\perp c$ then necessarily $(a\oplus c) \mathrel{\varepsilon_I} (b\oplus c)$. On the other hand, it is easy to check that the relation $\sigma$, given by $s\mathrel {\sigma} t$ if and only if $s=a\oplus c$ and $t=b\oplus c$ for some $c\in I$, is a congruence on $S$ which identifies all the elements of $I$. Hence $\sigma=\varepsilon_I$. Observe that $I$ coinsides with the $\varepsilon_I$-class of $0$. This gives rise to the map $I\mapsto \varepsilon_I$ from additive ideals of $S$ to additive congruences on $S$. This map is clearly injective. As we shall now explain, it, however, is not in general surjective. 

If $\varphi\colon S\to T$ is a surjective additive homomorphism between Boolean inverse semigroups, $\varphi^{-1}(0)$ is an additive ideal of $S$, denote it by $I$. It  gives rise to the congruence $\varepsilon_I$ and we have $\varepsilon_I\subseteq {\mathrm{Ker}}\varphi$.  

Let $\sigma$ be a non-trivial idempotent-separating congruence. By Proposition \ref{prop:mu}, if $S$ is not fundamental, $\mu$ is an example of such a congruence. If $a\mathrel{\sigma} 0$, then ${\bf d}(a) \mathrel{\sigma} 0$ so that ${\bf d}(a) = 0$, and thus $a=0$. The $\sigma$-class of $0$ is thus $I=\{0\}$. The congruence $\varepsilon_{\{0\}}$ is of course the trivial congruence and it does not coincide with $\sigma$. It follows that the kernel of $\sigma$ is not equal to any congruence of the form $\varepsilon_I$.  We say that a congruence is {\em ideal-induced} if it coincides with $\varepsilon_I$ for some $I$. It follows that, in contrast to the ring theory, it is in general not true that any congruence on a Boolean inverse semigroup is ideal-induced. 

\begin{proposition} \label{prop:factorization} Let $\sigma$ be an additive congruence on a Boolean inverse semigoroup $S$. Then $S/\sigma$ is a quotient of $S/\varepsilon_I$, where $I$ is the $\sigma$-class of $0$, by an idempotent separating congruence. 
\end{proposition}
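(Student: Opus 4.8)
The plan is to show that the map $\varepsilon_I \subseteq \sigma$ (where $I$ is the $\sigma$-class of $0$), so that $\sigma$ factors through $S/\varepsilon_I$, and then to verify that the induced congruence $\bar\sigma$ on $S/\varepsilon_I$ is idempotent-separating. Write $q\colon S\to S/\varepsilon_I$ and $p\colon S\to S/\sigma$ for the canonical projections; both are additive by construction. The key fact we have already established about $\varepsilon_I$ is its explicit description: $s \mathrel{\varepsilon_I} t$ if and only if $s = a\oplus c$ and $t = b\oplus c$ for some $c\in I$ and $a,b\perp c$. I want to exploit this concrete form.

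**Containment $\varepsilon_I\subseteq\sigma$.** First I would check that $\varepsilon_I \subseteq \sigma$. Suppose $s\mathrel{\varepsilon_I} t$, so $s = a\oplus c$ and $t = b\oplus c$ with $c\in I$. Since $I$ is the $\sigma$-class of $0$, we have $c\mathrel{\sigma} 0$, hence $a\mathrel{\sigma} a$, $b \mathrel{\sigma} b$, and because $\sigma$ is additive (so it respects orthogonal joins and sends $[c]_\sigma = [0]_\sigma$), we get $[s]_\sigma = [a\oplus c]_\sigma = [a]_\sigma \oplus [c]_\sigma = [a]_\sigma \oplus [0]_\sigma = [a]_\sigma$, and similarly $[t]_\sigma = [b]_\sigma$. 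To conclude $s\mathrel\sigma t$ it then suffices to see $a\mathrel\sigma b$. This is exactly the point where I expect the main work to lie: I have $s\mathrel{\varepsilon_I} t$ but I must extract the relation between the non-$c$ parts. The cleanest route is probably to observe that $a = s\setminus c$ and $b = t\setminus c$ are determined by $s,t,c$ via the relative-complement formula $s\setminus c = s(\mathbf d(s)\setminus \mathbf d(c))$, so that equal data forces $a\mathrel\sigma b$ through additivity and the fact that $\sigma$ is a semigroup congruence respecting $\obslash$. Since the universal/minimal property of $\varepsilon_I$ guarantees $\varepsilon_I$ is the smallest additive congruence collapsing $I$, and $\sigma$ is an additive congruence collapsing $I$ (all of $I$ lies in one $\sigma$-class, namely $[0]_\sigma$), the containment $\varepsilon_I\subseteq\sigma$ in fact follows immediately from minimality — this is the slick argument I would use, sidestepping the explicit computation entirely.

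**The induced congruence is idempotent-separating.** By the containment just proved and the standard correspondence/factorization theorem for congruences, $\sigma$ induces a well-defined additive congruence $\bar\sigma$ on $S/\varepsilon_I$ with $(S/\varepsilon_I)/\bar\sigma \cong S/\sigma$; this is the assertion that $S/\sigma$ is a quotient of $S/\varepsilon_I$. It remains to show $\bar\sigma$ is idempotent-separating. So suppose $\bar e, \bar f$ are idempotents of $S/\varepsilon_I$ with $\bar e\mathrel{\bar\sigma}\bar f$; lift them to idempotents $e,f\in E(S)$ with $q(e)=\bar e$, $q(f)=\bar f$ (possible since $q$ is surjective and, being a homomorphism, maps idempotents onto the idempotents of $S/\varepsilon_I$). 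Then $e\mathrel\sigma f$. I would now compute the relative complement $g = e\setminus f = e(f)^{\perp}$-part, i.e. $g = e(\mathbf d(e)\setminus \mathbf d(f)) = e\setminus(e\wedge f)$ inside $E(S)$. Since $e\mathrel\sigma f$ and $\sigma$ is additive, $g\mathrel\sigma 0$, so $g\in I$, whence $q(g) = \bar 0$. But $\bar e = q(e\wedge f)\oplus q(g) = q(e\wedge f)$, and symmetrically $\bar f = q(e\wedge f)$, giving $\bar e = \bar f$.

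**Main obstacle.** The delicate step is the well-definedness and additivity of the induced congruence $\bar\sigma$, together with lifting idempotents of $S/\varepsilon_I$ to idempotents of $S$ — the latter uses that the quotient map of inverse semigroups carries $E(S)$ onto $E(S/\varepsilon_I)$, which holds for any surjective (inverse-)semigroup homomorphism. Everything else reduces to manipulations with orthogonal joins and relative complements that are routine given (BIS1), (BIS2) and additivity of $\sigma$; the conceptual core is simply the minimality of $\varepsilon_I$ among additive congruences collapsing $I$, which yields $\varepsilon_I\subseteq\sigma$ and hence the factorization.
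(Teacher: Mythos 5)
Your proof is correct and follows essentially the same route as the paper's: the containment $\varepsilon_I\subseteq\sigma$ via minimality of $\varepsilon_I$, followed by the observation that for $\sigma$-identified idempotents the relative complement $e\setminus ef$ is $\sigma$-related to $0$, hence lies in $I$ and vanishes in $S/\varepsilon_I$, forcing $\bar e=q(ef)=\bar f$. The only cosmetic difference is that you lift idempotents to $S$ and compute there, whereas the paper performs the same complement argument directly in $S/\varepsilon_I$, phrased as the statement that the canonical map $S/\varepsilon_I\to S/\sigma$ has trivial preimage of $0$.
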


\begin{proof} The first claim holds since $\varepsilon_I \subseteq \sigma$. For the second one, observe that the inverse image of $0$ under the canonical map $S/\varepsilon_I\to S/\sigma$ is just $0$, so that if idempotents $e$ and $f$ are identified by this canonical map, then $e$ is identified with $ef$, so that $e\setminus ef$ is identified with $0$. Thus $e=ef$ and similarly $f=ef$ which yields the claim.
\end{proof}

\begin{definition} {\em
A Boolean inverse semigroup $S$ is called:
\begin{enumerate}
\item {\em simple}, if it does not have any proper non-trivial additive congruences or, equivalently, if every homomorphic image of $S$ (under an additive homomorphism between Boolean inverse semigroups) is either $0$ or isomorphic to $S$;
\item {\em additively 0-simple}, if it has no non-zero proper additive ideals.
\end{enumerate}}
\end{definition}

\begin{theorem} \cite[Theorem 2.7]{StSz} A Boolean inverse semigroup $S$ is simple if and only if it is fundamental and additively $0$-simple.
\end{theorem}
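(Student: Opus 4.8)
The plan is to deduce the equivalence from three tools already in place: the fact (Proposition \ref{prop:mu}) that $\mu$ is an additive congruence, the factorization result (Proposition \ref{prop:factorization}), and the maximality of $\mu$ among idempotent-separating congruences. The guiding principle is that a proper additive congruence $\sigma$ is determined, up to an idempotent-separating part, by its $0$-class, which is an additive ideal; so controlling additive ideals together with idempotent-separation controls all additive congruences.

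For the implication that a simple $S$ is fundamental and additively $0$-simple, I would argue by contraposition in two independent steps. If $S$ has a non-zero proper additive ideal $I$, then $\varepsilon_I$ is an additive congruence whose $0$-class is exactly $I$; since $I\neq\{0\}$ it is non-trivial, and since $I\neq S$ the quotient $S/\varepsilon_I$ is non-zero, so $\varepsilon_I$ is proper, and $S$ is not simple. If $S$ is not fundamental, then $\mu$ is not the equality relation, hence non-trivial; by Proposition \ref{prop:mu} it is additive, and being idempotent-separating it keeps $0$ apart from any nonzero idempotent ${\bf d}(a)$, so $S/\mu$ is non-zero and $\mu$ is proper. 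In either case simplicity fails.

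For the converse, assume $S$ is fundamental and additively $0$-simple, and let $\sigma$ be a proper additive congruence; the goal is that $\sigma$ is trivial. Its $0$-class $I$ is an additive ideal, so by additive $0$-simplicity $I=\{0\}$ or $I=S$; the latter would make $\sigma$ universal, contradicting properness, so $I=\{0\}$. Then $\varepsilon_I$ is the equality relation and $S/\varepsilon_I=S$, and Proposition \ref{prop:factorization} exhibits $S/\sigma$ as a quotient of $S$ by an idempotent-separating congruence --- that congruence being $\sigma$ itself. Finally, since $\mu$ is the maximal idempotent-separating congruence and equals the equality relation by fundamentality, $\sigma$ is contained in the equality relation, i.e.\ $\sigma$ is trivial. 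Hence $S$ has no proper non-trivial additive congruence and is simple.

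I expect the only real care to lie in the converse: one must correctly observe that reducing to $I=\{0\}$ places us in the situation $S/\varepsilon_I=S$, so that Proposition \ref{prop:factorization} applies to $\sigma$ directly and identifies it as idempotent-separating. The bulk of the difficulty has in fact been absorbed into the earlier results (especially Proposition \ref{prop:factorization} and the maximality of $\mu$), so the theorem itself reduces to bookkeeping of the trivial, proper, and universal congruences, together with the harmless remark that the degenerate case $S=\{0\}$ satisfies all three properties vacuously.
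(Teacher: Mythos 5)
Your proof is correct and takes essentially the same route as the paper's: one direction by contraposition using $\mu$ (via Proposition \ref{prop:mu}) and $\varepsilon_I$, and the converse by showing the $0$-class of $\sigma$ is $\{0\}$ or $S$, then invoking Proposition \ref{prop:factorization} to see $\sigma$ is idempotent-separating, hence trivial by fundamentality. Your explicit appeal to the maximality of $\mu$ just spells out what the paper leaves implicit in ``so it is a trivial congruence, as $S$ is fundamental.''
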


\begin{proof} The `if' direction is clear: if $S$ is not fundamental, then $\mu$ is a non-trivial proper congruence, if it has a proper non-trivial ideal $I$ then $\varepsilon_I$ is such a congruence. Conversely, suppose that $S$ is fundamental and additively $0$-simple and let $\sigma$ be an additive congruence on $S$. Then $[0]$ is an additive ideal, so it is $\{0\}$ or $S$. If it is $S$ then $\sigma$ is the universal congruence, and we are done. Suppose $[0] = \{0\}$. A similar argument as in the proof of Proposition \ref{prop:factorization} (or an application of this proposition) yields that $\sigma$ 
is idempotent-separating, so it is a trivial congruence, as $S$ is fundamental.
\end{proof}

\section{Semisimple and finite Boolean inverse semigroups} \label{s:semisimple} The results in this section are a special case of the non-commutative Stone duality for Boolean inverse semigroups \cite{KL17, LL13},  see \cite{L21}. Here we provide direct elementary proofs not invoking the non-commutative Stone duality.  The construction should be thought of as a non-commutative extension of the correspondence between finite sets and finite Boolean algebras.

\subsection{Semisimple Boolean inverse semigroups}
\begin{definition} {\em (Semisimple Boolean inverse semigroups \cite[Definition 3.7.5]{W17}) A Boolean inverse semigroup $S$ is called {\em semisimple}, if for each $s\in S$ the set $s^{\downarrow} = \{t\in S\colon t\leq s\}$ is finite.}
\end{definition}

A non-zero element $s\in S$ is called an {\em atom} if it is $0$-minimal with respect to the natural partial order on $S$, that is, if $0< t\leq s$ implies $t=s$. Note that any two distinct compatible atoms are necessarily orthogonal. 

Suppose that $S$ is a semisimple Boolean inverse semigroup and $s\in S\setminus\{0\}$.  Then every chain of elements between $s$ and $0$ is finite and thus contains an atom and there are only finitely many atoms below $s$. The join of these atoms, $t$, is below $s$ as well. If $t\neq s$, then $s\setminus t \neq 0$, and there is an atom $u\leq s\setminus t$.
But then $u\leq s$, so that $u\leq t$, by the definition of $t$. It follows that $t=s$. So every non-zero element $s\in S$ is the join of all the atoms below it. Moreover,  two distinct atoms below $s$ are necessarily orthogonal. For $s,t\in S$ let $s_1,\dots, s_n$ and $t_1,\dots, t_k$ be the lists of atoms below $s$ and $t$, respectively. Then
$$
st = (s_1\oplus \cdots \oplus s_n)(t_1\oplus \dots \oplus t_k) = \bigoplus \{s_it_j \colon 1\leq i\leq n, 1\leq j\leq k\}.
$$

Note that every $s_it_j$ is either zero or an atom. Indeed, suppose $u\leq s_it_j$. Then $u=s_it_je$ for some $e\in E(S)$. But $t_je\leq t_j$, so $t_je$ is either $0$ or $t_j$. In the first case $u=0$, and in the second case $u=s_it_j$ and it is an atom. As $s_i\leq s$ and $t_j\leq t$, we have $s_it_j\leq st$. Conversely, any atom $u$ below $st$ can be written as
$u=ste$ for some $e\in E(S)$ which rewrites to $u=(s{\bf r}(te))(te)$. Observe that both $s{\bf r}(te)\leq s$ and $te\leq t$ must be atoms, as otherwise we would find a non-zero element properly below $u$. Hence the non-zero products $s_it_j$ in fact run through the set of all the atoms below $st$. It follows that the multiplication in $S$ can be recovered from the multiplication of atoms using orthogonal joins. Note that if $u$ is an atom then so are ${\bf d}(u)$ and ${\bf r}(u)$. Moreover, idempotent atoms coincide with the atoms of $E(S)$. 

It follows that the atoms of $S$ form a groupoid, called the  {\em groupoid of atoms} of $S$, denote it by  ${\mathcal G}_{atoms}(S)$. Its objects are the atoms which are idempotents, and the arrows from $e$ to $f$ are the atoms $s$ with ${\bf d}(s) = e$ and ${\bf r}(s)=f$. Orthogonal finite sets of atoms in $S$ are in bijection with local bisections of ${\mathcal G}_{atoms}(S)$ where a {\em local bisection} of a groupoid is its subset, $A$, such that the restrictions of the domain and the range maps of the groupoid to $A$ are injective. We arrive at the following statement.

\begin{proposition} \label{prop:atoms} Any semisimple Boolean inverse semigroup is isomorphic to the inverse semigroup of all finite local bisections of its groupoid of atoms.
\end{proposition}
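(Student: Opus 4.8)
The plan is to produce the isomorphism explicitly as the map sending each element to the set of atoms lying below it, and to lean on the structural facts already assembled in the paragraphs preceding the statement. Write $\mathcal{K}$ for the inverse semigroup of all finite local bisections of ${\mathcal G}_{atoms}(S)$, with multiplication given by groupoid composition $A\cdot B = \{uv : u\in A,\ v\in B,\ {\bf d}(u) = {\bf r}(v)\}$ and with the empty bisection as zero. Define $\Phi\colon S\to \mathcal{K}$ by $\Phi(0)=\emptyset$ and, for $s\neq 0$, by $\Phi(s) = \{u\in S : u \text{ is an atom and } u\leq s\}$.

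First I would check that $\Phi$ is well defined, that is, that $\Phi(s)$ really is a finite local bisection. Finiteness is immediate from semisimplicity, since $\Phi(s)\subseteq s^{\downarrow}$. That $\Phi(s)$ is a local bisection is exactly the correspondence, recorded just before the statement, between orthogonal finite sets of atoms and local bisections: distinct atoms below $s$ are compatible (they share the upper bound $s$) hence orthogonal, and by Lemma \ref{lem:ort} orthogonality of atoms forces their domain idempotents, and their range idempotents, to be pairwise orthogonal, i.e. distinct as objects of ${\mathcal G}_{atoms}(S)$; this is precisely injectivity of ${\bf d}$ and of ${\bf r}$ on $\Phi(s)$.

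Next I would establish bijectivity. Injectivity is the observation, already proved above, that every non-zero $s$ is the orthogonal join of the atoms below it, so $s = \bigoplus \Phi(s)$ is recovered from $\Phi(s)$. For surjectivity, given a finite local bisection $A$ its members are pairwise orthogonal atoms, so the join $\bigoplus A$ exists by (BIS2a) and induction; one then checks that the atoms below $\bigoplus A$ are exactly the members of $A$. Indeed, any atom $v\leq \bigoplus A$ is compatible with each $a\in A$, hence equal to some $a$ or orthogonal to all of them, and the latter would force ${\bf d}(v)\perp {\bf d}(\bigoplus A)$ together with ${\bf d}(v)\leq {\bf d}(\bigoplus A)$, giving $v=0$; so $\Phi(\bigoplus A)=A$.

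The crux is the homomorphism property $\Phi(st) = \Phi(s)\cdot\Phi(t)$, and here I would reuse the computation $st = \bigoplus\{s_it_j\}$ over atoms $s_i\leq s$, $t_j\leq t$, together with the fact that the non-zero products $s_it_j$ run through precisely the atoms below $st$. The point is to identify when $s_it_j\neq 0$: writing $s_it_j = s_i\,({\bf d}(s_i){\bf r}(t_j))\,t_j$ and using that ${\bf d}(s_i)$ and ${\bf r}(t_j)$ are idempotent atoms, hence either equal or orthogonal, one sees that $s_it_j\neq 0$ exactly when ${\bf d}(s_i) = {\bf r}(t_j)$, in which case $s_it_j$ is the groupoid composite of $s_i$ and $t_j$. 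This matches term by term the defining product of $\mathcal{K}$, so $\Phi$ is multiplicative. Being a bijective semigroup homomorphism between inverse semigroups, $\Phi$ automatically preserves inverses; and since the atoms below an orthogonal join $s\oplus t$ are the disjoint union of those below $s$ and below $t$, $\Phi$ is additive, so it is an isomorphism of Boolean inverse semigroups. I expect the homomorphism step, specifically pinning down the equivalence $s_it_j\neq 0 \iff {\bf d}(s_i)={\bf r}(t_j)$ and confirming that $\Phi(s)\cdot\Phi(t)$ is again a finite local bisection, to be the only part requiring genuine care; everything else is direct bookkeeping on the facts already in hand.
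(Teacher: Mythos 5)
Your proof is correct and takes essentially the same route as the paper: the paper's argument is precisely the discussion preceding the proposition, which establishes the facts you invoke (every non-zero element is the orthogonal join of the finitely many atoms below it, the non-zero products $s_it_j$ of atoms run exactly over the atoms below $st$, and orthogonal finite sets of atoms correspond to finite local bisections), with the isomorphism being the same map $s\mapsto\{\text{atoms below } s\}$. Your explicit verification of surjectivity and of the equivalence $s_it_j\neq 0 \iff {\bf d}(s_i)={\bf r}(t_j)$ merely spells out details the paper leaves implicit.
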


For a groupoid ${\mathcal G}$ we denote its inverse semigroup of all finite local bisections by $I({\mathcal G})$. Proposition \ref{prop:atoms} and the discussion that preceeds it establish an isomorphism between a semisimple Boolean inverse semigroup $S$ and the inverse semigroup $I({\mathcal G}_{atoms}(S))$.

Recall that a groupoid is called {\em principal} if all its isotropy groups (that is, the groups consisting of all elements of the groupoid whose domain and range coincide). 

\begin{proposition} \label{prop:structure} Let $S$ be a semisimple Boolean inverse semigroup.
\begin{enumerate}
\item $S$ is fundamental if and only if  ${\mathcal G}_{atoms}(S)$ is principal.
\item Additive ideals of $S$ are in a bijection with connected components of  ${\mathcal G}_{atoms}(S)$ via the map which assigns to a connected component $\Gamma$ of ${\mathcal G}_{atoms}(S)$ the ideal consisting of all finite local bisections contained in $\Gamma$.
\end{enumerate}
\end{proposition}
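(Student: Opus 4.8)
For part (1), the plan is to route everything through Proposition~\ref{prop:fund}, so that $S$ is fundamental if and only if $Z(E(S)) = E(S)$. The key step is a description of the centralizer in terms of atoms: I claim that for $a\in S$ one has $a\in Z(E(S))$ if and only if every atom below $a$ lies in an isotropy group of ${\mathcal G}_{atoms}(S)$, i.e. satisfies ${\bf d}(s) = {\bf r}(s)$. To prove it, write $a = \bigoplus_i s_i$ as the orthogonal join of the atoms below it (as established before Proposition~\ref{prop:atoms}), and for an idempotent atom $g$ compute $ag = \bigoplus\{s_i : {\bf d}(s_i) = g\}$ and $ga = \bigoplus\{s_i : {\bf r}(s_i) = g\}$, using that $g$ is $0$-minimal so that ${\bf d}(s_i)g$ and $g\,{\bf r}(s_i)$ each lie in $\{0,g\}$. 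Since distinct atoms are orthogonal, the equality $ag = ga$ is equivalent to the equality of the two atom-sets; letting $g$ range over all idempotent atoms forces ${\bf d}(s_i) = {\bf r}(s_i)$ for every $i$, and conversely. Finally, distributivity of the multiplication over orthogonal joins upgrades ``$ag = ga$ for all idempotent atoms $g$'' to ``$af = fa$ for all $f\in E(S)$'', since every idempotent is the orthogonal join of the idempotent atoms below it (any element $\le$ an idempotent is idempotent, as idempotents commute).

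With this lemma in hand, part (1) falls out. If ${\mathcal G}_{atoms}(S)$ is principal then the only isotropy atoms are the identities, which are precisely the idempotent atoms; hence any $a\in Z(E(S))$ is a join of idempotent atoms and so is itself idempotent, giving $Z(E(S)) = E(S)$ and fundamentality. Conversely, if the groupoid is not principal, choose a non-identity isotropy atom $s$ at some object $e$; then $s\in Z(E(S))$ by the lemma, but $s\notin E(S)$ (an idempotent atom $s$ would satisfy $s = {\bf d}(s) = e$, contradicting non-identity), so $Z(E(S)) \neq E(S)$ and $S$ is not fundamental.

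For part (2), the plan is to transport everything to the idempotent semilattice via Proposition~\ref{prop:ideals}, which gives a bijection between additive ideals of $S$ and conjugation-closed ideals of the generalized Boolean algebra $E(S)$ through $I\mapsto {\bf d}(I)$. In the semisimple setting every element of $E(S)$ is a finite join of idempotent atoms, that is, of objects of the groupoid, so an ideal $J$ of $E(S)$ is determined by, and freely determined by, the set of objects it contains. The remaining and main point is to decode conjugation-closure: using the identity $s^{-1}es = {\bf d}(es)$ (noted after Lemma~\ref{lem:dom1}) together with the atom decompositions of $s$ and $e$, one computes $s^{-1}es = \bigoplus\{{\bf d}(t) : t\le s \text{ an atom},\ {\bf r}(t)\in J\}$. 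Hence $J$ is closed under conjugation exactly when, for every arrow $t$ of ${\mathcal G}_{atoms}(S)$ whose range object lies in $J$, its domain object lies in $J$ as well; since groupoid arrows are invertible, this says precisely that the object-set of $J$ is saturated, i.e. a union of connected components.

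Putting these together, additive ideals of $S$ correspond bijectively to unions of connected components of ${\mathcal G}_{atoms}(S)$, the ideal attached to such a union being $\{s\in S : {\bf d}(s)\in J\}$, which by Proposition~\ref{prop:atoms} is exactly the set of finite local bisections all of whose arrows lie in the union. Specializing to a single component $\Gamma$ recovers the map of the statement and exhibits the $I_\Gamma$ as the minimal non-zero additive ideals, a general additive ideal being the join of the $I_\Gamma$ over the components it contains. I expect the main obstacle to be the second part: correctly reducing conjugation-closure to the combinatorial saturation condition (the computation $s^{-1}es = {\bf d}(es)$ and the bookkeeping of which atoms survive), and being careful that additive ideals match \emph{unions} of components rather than individual components. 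The centralizer computation in part (1) should be routine once the atom decomposition is in place.
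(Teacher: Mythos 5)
Your proposal is correct and follows essentially the same route as the paper: part (1) is reduced via Proposition~\ref{prop:fund} to the observation that an atom commutes with all idempotents exactly when its domain and range idempotents coincide (the paper records precisely this, in the compressed form ${\bf d}(s)s=s$ or $0$ according to whether ${\bf d}(s)={\bf r}(s)$), and part (2) is transported through Proposition~\ref{prop:ideals} to conjugation-closed ideals of $E(S)$. Your centralizer lemma and the upgrade from idempotent atoms to arbitrary idempotents is exactly the detail the paper's ``basically follows'' leaves implicit.

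There is, however, one point where you genuinely improve on the paper rather than merely expand it. The paper's proof of part (2) asserts that the assignment $\Gamma\mapsto I_\Gamma$ is ``clearly well defined and injective'' and surjective by Proposition~\ref{prop:ideals}; as literally stated this bijection with \emph{single} connected components fails: the zero ideal is not $I_\Gamma$ for any component, and when the groupoid has two or more components the ideal corresponding to a union of several of them (for instance $S$ itself, when there are exactly two) is additive but not of the form $I_\Gamma$. Your version --- additive ideals biject with \emph{unions} (equivalently, sets) of connected components, with the $I_\Gamma$ appearing as the minimal non-zero additive ideals --- is the correct statement, and your saturation argument proves it. One small slip to fix: the displayed identity $s^{-1}es = \bigoplus\{{\bf d}(t)\colon t\le s \text{ an atom},\ {\bf r}(t)\in J\}$ cannot be right as written, since the left side depends on $e$ and the right side does not; it should read ${\bf r}(t)\le e$, after which downward closure of $J$ yields ${\bf r}(t)\in J$ and the saturation criterion, and the rest of your argument goes through unchanged.
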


\begin{proof} (1) The statement basically follows from Proposition \ref{prop:atoms},  Proposition \ref{prop:fund} and the observation that an atom $s$ commutes with all the idempotents if and only if ${\bf d}(s) = {\bf r}(s)$. This follows from the fact that $s{\bf d}(s)=s$ and $${\bf d}(s)s = \left\lbrace \begin{array}{ll} s, & \text{if } {\bf d}(s) = {\bf r}(s),\\ 0, & \text{otherwise.}\end{array}\right.$$
(2) The assignment is clearly well defined and injective.  By Proposition \ref{prop:ideals} it is also surjective.
\end{proof}

We now recall the well known structure of connected groupoids. Let $X$ be a set and $G$ a group. Then $X\times G\times X$ is a connected groupoid with objects $X$ and maps from $x$ to $y$ being the elements $(y,g,x)$ where $g$ runs through $G$. The multiplication is given by $(z,h,y)(y,g,x) = (z,hg,x)$ and $(y,g,x)^{-1} = (x,g^{-1},y)$. Furthermore, every connected (discrete) groupoid is of this form. The groupoid $X\times G\times X$ is principal if and only if $G$ is the trivial group, in which case the groupoid is just (isomorphic to) the pair groupoid $X\times X$. The finite local bisections of such groupoids are given by rook matrices (that is, matrices with at most one non-zero element in each row and column\footnote{The term stems from thinking of such a matrix as a chess board, and of the non-zero elements of the matrix as rooks which do not attack each other.}) with rows and columns indexed by $X$ with finitely many non-zero entries from $G^0 = G\cup \{0\}$ (where $0\not\in G$): a finite local bisection $s\subseteq X\times G\times X$ gives rise to a matrix $A_s = (a_{ij})_{i,j\in X}$ where $a_{ij} = \left\lbrace\begin{array}{ll} g, & \text{if } (j,g,i)\in s,\\ 0, & \text{otherwise.} \end{array}\right.$ It follows that $I(X\times G\times X)$ is isomorphic to the semigroup $M_{fin}(X,G^0)$ of all rook matrices with rows and columns indexed by $X$ over $G^0$ with finitely many non-zero entries. In the case where $G = \{1\}$ (that is, when the groupoid $X\times G\times X$ is principal), we get the Boolean inverse semigroup  $M_{fin}(X,\{0,1\})$ which is isomorphic to the {\em finitary symmetric inverse semigroup} ${\mathcal I}_{fin}(X)$ which consists of all injective maps $Y\to X$ where $Y$ is a finite subset of $X$. If $X$ is finite, $M_{fin}(X,\{0,1\})$ is isomorphic to ${\mathcal I}(X)$.

In the case when there is no restriction on the number of connected components of ${\mathcal G}$, we get that $I({\mathcal G})$ is isomorphic to the {\em direct sum} $\bigoplus_{i\in I}M_{fin}(X_i,G_i^0)$ where $i$ runs over the set $I$ of connected components of the groupoid.

\subsection{Finite Boolean inverse semigroups.} Any finite Boolean inverse semigroup is semisimple, moreover, finite Boolean inverse semigroups are precisely those semisimple Boolean inverse semigroups $S$ for which ${\mathcal G}_{atoms}(S)$ is finite. It follows that finite semisimple Boolean inverse semigroups are precisely of the form $I({\mathcal G})$ where ${\mathcal G}$ is finite. It follows from the discussion above that any finite Boolean inverse semigroup is isomorphic to the finite direct sum $M_{n_1}(G_1^{0})\oplus \cdots \oplus M_{n_k}(G_k^0)$ where $G_1,\dots, G_n$ are finite groups (where $M_n(G^0)$ is the inverse semigroup of all $n\times n$ rook matrices over $G^0$). We obtain the following. 

\begin{proposition}\label{prop:finite_fundamental} Let $S$ be a finite Boolean inverse semigroup. Then $S$ is fundamental if and only if it is isomorphic to a finite direct sum ${\mathcal I}_{n_1} \oplus \cdots \oplus {\mathcal I}_{n_k}$ of symmetric inverse semigroups. It is fundamental and additively $0$-simple if and only if it is isomorphic to a finite symmetric inverse semigroup ${\mathcal I}_n$.
\end{proposition}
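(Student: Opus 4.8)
The plan is to read the two equivalences directly off the structure theory assembled above, so that the proposition becomes essentially a corollary of Proposition~\ref{prop:structure}. The starting observation, already recorded before the statement, is that a finite Boolean inverse semigroup $S$ is semisimple with finite groupoid of atoms, whence $S \cong \bigoplus_{i=1}^{k} M_{n_i}(G_i^0)$, where the connected components of ${\mathcal G}_{atoms}(S)$ are the connected groupoids $X_i \times G_i \times X_i$ with $|X_i| = n_i$ and each $G_i$ a finite group. Everything below is then bookkeeping on this decomposition, with one genuinely substantive step.

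First I would dispose of the fundamental case. By Proposition~\ref{prop:structure}(1), $S$ is fundamental exactly when ${\mathcal G}_{atoms}(S)$ is principal. Since isotropy elements have equal domain and range and so never leave a single component, the whole groupoid is principal iff each component $X_i \times G_i \times X_i$ is, i.e.\ iff every $G_i$ is trivial. In that case $G_i^0 = \{0,1\}$ and $M_{n_i}(G_i^0) \cong {\mathcal I}_{n_i}$, giving $S \cong {\mathcal I}_{n_1} \oplus \cdots \oplus {\mathcal I}_{n_k}$; conversely each ${\mathcal I}_n$ has as its groupoid of atoms the pair groupoid on $n$ points, which is principal, so any such finite direct sum is fundamental.

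Next I would impose additive $0$-simplicity. Here I would use Proposition~\ref{prop:structure}(2) to convert ``no non-zero proper additive ideal'' into ``${\mathcal G}_{atoms}(S)$ is connected'', i.e.\ $k=1$. The only step that is not pure bookkeeping---and hence the main obstacle, modest as it is---is verifying that the lattice of additive ideals is the Boolean algebra on the set of components: one must show that a non-zero additive ideal meeting a component $M_{n_i}(G_i^0)$ contains all of it. This follows because such an ideal contains an atom of that component, reaches every atom of the (connected) component by multiplication, and then recovers the whole component by orthogonal joins using additivity. Consequently additive ideals correspond to subsets of $\{1,\dots,k\}$, and $S$ is additively $0$-simple precisely when $k=1$.

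Finally I would combine the two: $S$ fundamental and additively $0$-simple forces a single component carrying a trivial group, hence $S \cong {\mathcal I}_n$. For the converse, ${\mathcal I}_n$ is fundamental and additively $0$-simple because its groupoid of atoms is the pair groupoid on $n$ points, which is both principal and connected. One could instead deduce this last equivalence from the earlier characterization of simple Boolean inverse semigroups together with the first statement, but the direct groupoid computation is cleaner.
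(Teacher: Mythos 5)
Your proof is correct and takes essentially the same route as the paper: the paper obtains this proposition as an immediate consequence of the preceding decomposition $S \cong M_{n_1}(G_1^{0})\oplus \cdots \oplus M_{n_k}(G_k^0)$ together with Proposition~\ref{prop:structure}, which is exactly your argument. Your extra verification that additive ideals correspond to unions of connected components simply fills in bookkeeping that the paper leaves implicit in Proposition~\ref{prop:structure}(2).
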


\begin{remark} \label{rem:duality} {\em Generalizing the presented ideas, one can assign to an arbitrary Boolean inverse semigroup its {\em Stone groupoid}, which is an \'etale topological groupoid ${\mathcal G} = ({\mathcal G}^{(0)}, {\mathcal G}^{(1)})$ where the topology of the unit space ${\mathcal G}^{(0)}$ is the usual Stone topology of the dual space of the generalized Boolean algebra $E(S)$, and ${\mathcal G}^{(1)}$ is the set of ultrafilters of $S$. Moreover, this assignment gives rise to a categorical duality between Boolean inverse semigroups and Stone groupoids, see Lawson \cite{L10} (and also also Lawson \cite{L12} and Lawson and Lenz \cite{LL13}).  A precursor of this duality is due Resende \cite{R07}, who established a bijection between pseudogroups (that is, complete and infinitely distributive inverse semigroups) and localic \'etale groupoids. The approaches by Lawson and Lenz, on the one hand, and Resende, on the other one, were unified and extended, from inverse to restriction semigroups, and from \'etale groupoids to \'etale categories, by Kudryavtseva and Lawson \cite{KL17}. Recently, de Castro and Machado \cite{CM23} defined and studied non self-adjoint operator algebras associated to the objects of \cite{KL17} which naturally generalize groupoid $C^*$-algebras.  We mention also the work \cite{CG} by Cockett and Garner, where a variant of the mentioned dualities is developed from a general category-theoretical perspective, with inverse or restriction semigroups replaced by (left) join-restriction categories with local glueings.}
\end{remark}

\section{The type monoid of a Boolean inverse semigroup}\label{s:type}
\subsection{Partial commutative monoids} 
The following is \cite[Defition 2.1.1]{W17}.
\begin{definition} {\em (Partial commutative monoid) \label{def:partial_com} A {\em partial commutative monoid} is a structure $(P, \oplus, 0)$ where $\oplus$ is a partially defined binary operation on $P$ and the following axioms hold:
\begin{enumerate}
\item[(Assoc)] $(p\oplus q) \oplus r$ is defined if and only if $p\oplus (q \oplus r)$ is defined, and in the latter case $(p\oplus q) \oplus r = p\oplus (q \oplus r)$, for all $p,q,r\in P$.
\item[(Com)]  $p\oplus q$ is defined if and only if $q\oplus p$ is defined, and in the latter case $p\oplus q= q\oplus p$, for all $p,q\in P$.
\item[(Zero)] $0\oplus p$ is defined and equals $p$, for all $p\in P$.
\end{enumerate}
A partial commutative monoid $(P,\oplus, 0)$ is called {\em conical}, if it satisfies:
\begin{enumerate}
\item [(Conical)] if $0=p\oplus q$ for some $p,q\in P$ then $p=0$ (and thus, by (Com), also $q=0$).
\end{enumerate}
It is called {\em cancellable} if it satisfies:
\begin{enumerate}
\item[(Cancel)] If $p\oplus q = p\oplus r$ then $q=r$.
\end{enumerate}}
\end{definition}

In what follows, we restrict our attention only to conical partial commutative monoids. 

The {\em algebraic preordering} $\leq$ on a partial commutative monoid $P$ is given by $x\leq y$ where $x,y\in P$, if there is $z\in P$ such that $y=x\oplus z$. If $S$ is a conical cancellative partial commutative monoid, then $\leq$ is a partial order.

\begin{definition} {\em (Homomorphisms and $V$-homomorphisms)
A map $f\colon P\to Q$ between partial commutative monoids is said to be:
\begin{itemize}
\item  a {\em homomorphism}, if $f(0) = 0$ and $f(x\oplus y) = f(x)\oplus f(y)$ for all $x,y\in P$ such that $x\oplus y$ is defined;
\item a  $V$-{\em homomorphism}, if it is a homomorphism and if $f(x) = y\oplus z$ for some $x\in P$ and $y,z\in Q$ then there are $s,t\in P$ such that $x=s\oplus t$ and $f(s)=y$, $f(t) = z$;
\item {\em conical}, if $f(x)=0$ implies $x=0$.
\end{itemize}}
\end{definition}

Let $P, M$ be partial commutative monoids and $P\subseteq M$. We call $P$ a {\em lower interval} of $M$ if the inclusion map $\iota$ of $P$ into $M$ is a $V$-homomorphism. Note that lower invervals are precisely lower subsets with respect to $\leq$ with the partial addition inherited from $M$. Indeed, if $P$ is a lower interval of $M$ and $p\in P$, $m\in M$ are such that $m\leq p$ then $p=m\oplus n$ in $M$ for some $n\in M$. As $\iota$ is a $V$-homomorphism, this gives $p=q\oplus r$ for some $q,r\in P$ with $\iota(p)=m$ and $\iota(q)=n$. So $p=m$ and $q=n$ and in particular $m\in P$. Conversely, a lower subset with respect to $\leq$ is clearly a lower interval.

\subsection{The enveloping commutative monoid.} From now till the end of this section by $S$ we denote a conical partial commutative monoid. We now define a commutative monoid $U(S)$  which contains $S$ so that the inclusion map is a $V$-homomorphism and which is the `universal envelope' of $S$. Let $\overline{S} = \{s\colon s\in S\}$ be a disjoint copy of the underlying set of $S$. We define $U(S)$ to be the commutative monoid, freely generated by $\overline{S}$ subject to the following relations:
\begin{itemize}
\item[(U1)] $\overline{0} = 0$,
\item[(U2)] $\overline{p} + \overline{q}  = \overline{r}$ whenever $p  \oplus q$ is defined in $S$ and equals $r$.
\end{itemize}

Condition (U1) means that $\overline{0}+\overline{p} = \overline{p}$ for all $p\in S$. The following is a special case of \cite[Propositions 2.1.7, 2.1.8]{W17}.

\begin{proposition} \mbox{}\label{prop:univ}
\begin{enumerate}
\item The map $\iota\colon S\to U(S)$, $s\mapsto \overline{s}$ is a $V$-embedding (that is, an injective $V$-homo\-mo\-rphism).
\item Let $T$ be a commutative monoid (with the unit $0$) and $f\colon S\to T$ be a homomorphism. Then there is a unique monoid homomorphism $g\colon U(S)\to T$ such that $g\iota = f$.
\end{enumerate}
\end{proposition}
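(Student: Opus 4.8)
The plan is to treat the two parts separately, dispatching (2) by the universal property of a monoid presentation and reserving the real work for the injectivity and $V$-homomorphism claims in (1).

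For part (2), since $U(S)$ is by definition the free commutative monoid on $\overline{S}$ modulo (U1) and (U2), I would define $g$ on generators by $g(\overline{s}) = f(s)$ and check that the defining relations are respected: relation (U1) holds because $f(0) = 0$, and relation (U2) holds because, $T$ being a total commutative monoid, $f(p) + f(q) = f(p \oplus q) = f(r)$ whenever $p \oplus q = r$ in $S$, using that $f$ is a homomorphism of partial commutative monoids. The universal property of presentations then yields a unique monoid homomorphism $g$ with $g \iota = f$, uniqueness being forced by the fact that $\overline{S}$ generates $U(S)$.

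For part (1), the map $\iota$ is a homomorphism immediately: $\iota(0) = \overline{0} = 0$ by (U1), and $\iota(s \oplus t) = \overline{s} + \overline{t} = \iota(s) + \iota(t)$ by (U2). The substance is injectivity and the $V$-property, and here I would first set up a concrete model. Using (U1) I discard the generator $\overline{0}$ and realize $U(S)$ as the set of finite multisets over $S \setminus \{0\}$ modulo the congruence $\approx$ generated by the elementary moves that replace a pair $\{p, q\}$ by $\{p \oplus q\}$ whenever $p \oplus q$ is defined, and conversely; here conicality is used to guarantee that $p \oplus q \neq 0$ when $p, q \neq 0$, so that merges keep us inside multisets of nonzero elements. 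I would then call a finite multiset \emph{summable} if its elements can be combined by $\oplus$, in some order, with all intermediate sums defined, and record its \emph{sum} $\mathrm{Sum}(W) \in S$.

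The heart of the argument is the generalized associativity and commutativity law for partial commutative monoids, proved by induction from (Assoc), (Com) and (Zero): the sum of a summable multiset is independent of the order and bracketing, and if $W = A \sqcup B$ is summable then $A$ and $B$ are each summable with $\mathrm{Sum}(A) \oplus \mathrm{Sum}(B) = \mathrm{Sum}(W)$ (the partition lemma). Crucially, (Assoc) already forces $p \oplus q$ to be defined whenever $p \oplus (q \oplus r)$ is, which is what makes sub-multisets of summable multisets summable. Granting these, I would verify that $\mathrm{Sum}$ is invariant under the elementary moves --- for a merge $\{p,q\} \sqcup R \to \{p \oplus q\} \sqcup R$ this is exactly the partition lemma applied on both sides --- so $\mathrm{Sum}$ descends to a well-defined partial map $U(S) \rightharpoonup S$, defined precisely on the image of $\iota$, with $\mathrm{Sum}(\overline{s}) = s$. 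Injectivity is then immediate: $\overline{s} = \overline{t}$ gives $s = \mathrm{Sum}(\overline{s}) = \mathrm{Sum}(\overline{t}) = t$. For the $V$-property, if $\iota(x) = y + z$ with $y = [A]$ and $z = [B]$, then $\{x\} \approx A \sqcup B$, so $A \sqcup B$ is summable with sum $x$; the partition lemma produces $a = \mathrm{Sum}(A)$ and $b = \mathrm{Sum}(B)$ with $x = a \oplus b$, and since a summable multiset is $\approx$-equivalent to the singleton of its sum we obtain $\iota(a) = y$ and $\iota(b) = z$. I expect the main obstacle to be the careful inductive proof of the generalized associativity/commutativity and partition lemmas, which is the one genuinely technical point; everything else is bookkeeping once $\mathrm{Sum}$ is available as an $\approx$-invariant.
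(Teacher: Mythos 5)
Your proposal is correct and takes essentially the same route as the paper: part (2) via the universal property of the presentation, and part (1) by showing that summability and the value of the sum in $S$ are invariant under the defining relations (U1)--(U2), then extracting injectivity and the $V$-property by splitting a summable multiset into its two sub-multisets. The paper packages this as an induction along rewriting sequences anchored at $\overline{s}$, invoking (Assoc) exactly where your generalized associativity and partition lemmas enter, so your explicit isolation of those lemmas is the same content the paper leaves implicit in its steps (b) and (c).
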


\begin{proof} Elements of the free monoid over $\overline{S}$ are finite (possibly empty) sums $\sum k_s\overline{s}$. By (U1),  all elements $n\overline{0}$ are identified with $0$ in $U(S)$. Let us show that if $\overline{s} = \sum k_t \overline{t}$, then $s = \oplus k_t t$ in $S$. We prove this statement by induction. At the first step, we replace $\overline{s}$ either by $\overline{s} + \overline{0}$ or by $\overline{a} + \overline{b}$ where $a\oplus b$ is defined in $S$. So the needed statement holds. For an induction step, suppose that the statement holds after several applications of the defining relations to $\overline{s}$, so that $\overline{s}$ is written as $\sum k_t \overline{t}$ where $s = \oplus k_t t$. As the next step, we have one of the following possibilities:
\begin{enumerate}[(a)]
\item Suppose that we have either removed or added one summand $\overline{0}$. Then the statement holds as $s\oplus 0$ is defined for all $s\in S$. 
\item Suppose that we have replaced some $\overline{p} + \overline{q}$ by $\overline{r}$ where $p\oplus q = r$ in $S$. Using (Assoc), the statement still holds.
\item Suppose that we have replaced some $\overline{r}$ by $\overline{p} + \overline{q}$ where $p\oplus q = r$ in $S$. The statement holds again applying (Assoc). 
\end{enumerate}
It follows that $\overline{s} = \overline{t}$ implies $s=t$ in $S$, so that $\iota$ is injective. Let $\overline{s} = u + v$ in $U(S)$. As we have proved then $u = \sum k_s s$ and $v=\sum n_t t$ where
$(\oplus k_s s)\oplus (\oplus n_t t)$ is defined $S$. In particular, $x=\oplus k_s s$ and $y = \oplus n_t t$ are defined in $S$. It follows that $u=\iota(x)$ and $v=\iota(y)$, so that $\iota$ is a $V$-homomorphism.

(2) This is immediate.
\end{proof}

Proposition \ref{prop:univ}(1) says that the canonical inclusion $\iota$ identifies $S$ with a lower interval in $(U(S),\leq)$. 

According to \cite[Definition 2.2.1]{W17}, a partial commutative monoid $(P,\oplus, 0)$ is called a {\em partial refinement monoid} if it satisfies the {\em refinement property}: for all $a,b,c,d\in P$ satisfying $a\oplus b=c\oplus d$ there are $e_{11}, e_{12}, e_{21}, e_{22} \in P$ such that
$$
a=e_{11}\oplus e_{12}, \, b=e_{21}\oplus e_{22}, \, c=e_{11}\oplus e_{21}, \, d=e_{12}\oplus e_{22}.
$$
That is, elements $a$ and $b$ are the sums of the rows and $c$ and $d$ the sums of the columns of the following table:

\begin{center}
\begin{tabular}{|c | c| c|c|} 
 \hline
 & $c$ & $d$\\   
 \hline
$a$ & $e_{11}$ & $e_{12}$ \\ 
 \hline
$b$ & $e_{21}$ & $e_{22}$  \\ [1ex] 
 \hline
\end{tabular}
\end{center}

If $P$ is a monoid (that is, if the partial addition $\oplus$ is always defined) and satisfies the refinement property, it is called a {\em refinement monoid.}
We highlight the following interesting fact \cite[Proposition 2.2.4]{W17}.

\begin{proposition}  \label{prop:only}
Suppose that $U(S)$ is a refinement monoid (which requires $S$ be a partial refinement monoid). Then $U(S)$ is, up to isomorphism, the only commutative monoid $M$, which contains (an isomorphic copy of) $S$, is generated by $S$ and is such that the inclusion map of $S$ into $M$ is a $V$-embedding (or, equivalently, the only commutative monoid $M$, which contains $S$ as a lower interval).
\end{proposition}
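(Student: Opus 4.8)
The plan is to fix an arbitrary commutative monoid $M$ containing a copy of $S$ as a generating lower interval and to prove $M\cong U(S)$ through the canonical comparison map. By Proposition~\ref{prop:univ}(1), $U(S)$ is itself such a monoid, so existence is settled and only uniqueness is at stake. Applying the universal property Proposition~\ref{prop:univ}(2) to the inclusion $j\colon S\to M$ produces a monoid homomorphism $g\colon U(S)\to M$ with $g\iota=j$; since $M$ is generated by $S$, every element of $M$ is a finite sum of elements of $j(S)=g(\overline{S})$, so $g$ is surjective. Everything then reduces to showing that $g$ is injective.

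First I would assemble two toolkits. On the $U(S)$ side I use that $U(S)$ is a refinement monoid together with the fact (Proposition~\ref{prop:univ}(1)) that $\overline{S}$ is a lower interval: iterating the refinement property shows that $\sum_i\overline{x_i}=\sum_j\overline{y_j}$ holds in $U(S)$ if and only if there is a common refinement $(r_{ij})$ with $\overline{x_i}=\sum_j r_{ij}$ and $\overline{y_j}=\sum_i r_{ij}$, and since each $r_{ij}$ lies below a generator, the lower-interval property forces $r_{ij}\in\overline{S}$. Thus equality of two finite sums of generators in $U(S)$ is equivalent to the existence of an $S$-valued refinement matrix $(z_{ij})$ with $x_i=\bigoplus_j z_{ij}$ and $y_j=\bigoplus_i z_{ij}$ in $S$. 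On the $M$ side I use that $j$ is a $V$-embedding: if $a\in S$ and $a=\mu+\nu$ in $M$, then $\mu,\nu\in S$ and $a=\mu\oplus\nu$ in $S$; iterating, any decomposition in $M$ of an element of $S$ is an orthogonal join in $S$. Two consequences I record are that $M$ is conical (from conicality of $S$) and that $g$ is injective on $\overline{S}$.

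Next I would dispatch the single-generator case: if $g(\overline{x})=g\!\left(\sum_j\overline{y_j}\right)$ with $x,y_j\in S$, then $x=\sum_j y_j$ in $M$ with $x\in S$, so the $V$-embedding gives $x=\bigoplus_j y_j$ in $S$ and hence $\overline{x}=\sum_j\overline{y_j}$ already in $U(S)$. More generally, whenever $g(\overline{a})=g(\zeta_1)+g(\zeta_2)$ in $M$ for $\zeta_1,\zeta_2\in U(S)$, the $V$-embedding splits the generator as $a=b\oplus c$ in $S$ with $g(\overline{b})=g(\zeta_1)$ and $g(\overline{c})=g(\zeta_2)$; this splitting is the basic mechanism linking $\ker g$ to the partial addition of $S$, and combined with an induction on the number of summands it already settles the case where one of the two sides is a single generator.

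The remaining, and main, task is the multi-term case: given $g(u)=g(v)$ with $u=\sum_i\overline{x_i}$ and $v=\sum_j\overline{y_j}$, I must produce the $S$-valued refinement matrix connecting $(x_i)$ and $(y_j)$, after which the matrix characterization of the second paragraph gives $u=v$ in $U(S)$. Equivalently, this step amounts to showing that $g$ is a $V$-homomorphism, so that $M$ is a $V$-homomorphic image of the refinement monoid $U(S)$ and hence itself a refinement monoid. I expect this to be the hard part, and the place where the hypothesis that $U(S)$ is a refinement monoid is indispensable. The naive strategy of peeling off a single generator $\overline{x_1}$ from $v$ and inducting fails, because $M$ need not be cancellative, so $g(\overline{x_1})+g(u')=g(\overline{x_1})+g(v')$ does not yield $g(u')=g(v')$; moreover the equality $g(u)=g(v)$ lives in $M$, where refinement is not yet available, so it cannot be refined in place. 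The plan is therefore to generate the common refinement on the $U(S)$ side via the refinement property of $U(S)$ and to transport it onto the lists $(x_i),(y_j)$ by iterating the single-generator splitting above, the $V$-embedding guaranteeing at each stage that the refining pieces remain inside $S$; organizing this so as to circumvent the absent cancellation is precisely the crux of the argument.
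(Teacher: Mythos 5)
Your setup is sound as far as it goes: the comparison map $g\colon U(S)\to M$ induced by the universal property, its surjectivity, the conicality of $M$, the single-generator case via the $V$-embedding, and the characterization of equality of sums of generators in $U(S)$ by $S$-valued refinement matrices are all correct. But the multi-term case --- which you yourself flag as ``the crux'' --- is only planned, not proved, and the plan is circular. The refinement property of $U(S)$ can be applied only to an equality that already holds \emph{in} $U(S)$; the equality you are given, $g(u)=g(v)$, holds in $M$, and transporting it into $U(S)$ is precisely the injectivity you are trying to establish. The $V$-embedding cannot bridge this, since it splits only single elements of $S$ written as sums in $M$, never sums of two or more elements of $S$. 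So the proposal stops exactly where the real argument would have to begin.

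Moreover, this gap cannot be filled at the stated level of generality, because the uniqueness claim is false for arbitrary commutative monoids $M$. Take $S=\{0,1\}$ with no sums defined except those involving $0$; then $U(S)\cong {\mathbb N}_{0}$ is a refinement monoid, yet the three-element monoid $M=\{0,1,\infty\}$, obtained from ${\mathbb N}_{0}$ by identifying all integers $\geq 2$, also contains $S$ as a generating lower interval (in $M$ the equality $1=x+y$ forces $\{x,y\}=\{0,1\}$ and $0=x+y$ forces $x=y=0$, so the inclusion is a $V$-embedding), and $M\not\cong U(S)$. The paper itself does not prove this proposition but cites \cite[Proposition 2.2.4]{W17}, where the uniqueness is asserted among \emph{refinement} monoids $M$; that hypothesis is what your proof attempt is missing and what the paper's wording obscures. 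With it, the proof is short and the refinement property is applied in $M$, not in $U(S)$: refine $\sum_i x_i=\sum_j y_j$ in $M$ to get a matrix $(r_{ij})$; each $r_{ij}$ lies below $x_i\in S$, hence lies in $S$ ($S$ is a lower subset of $M$); the row and column sums of the matrix are then defined in $S$ (the partial addition of $S$ is inherited from $M$), so the relations (U2) already give $\sum_i\overline{x_i}=\sum_{i,j}\overline{r_{ij}}=\sum_j\overline{y_j}$ in $U(S)$. Note that in this corrected argument the hypothesis that $U(S)$ is a refinement monoid is needed only to guarantee that such an $M$ exists at all; your instinct to generate the refinement on the $U(S)$ side runs in exactly the wrong direction.
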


\subsection{Green's relation ${\mathcal D}$} Let $S$ be a semigroup and $a,b\in S$.
By definition, we have $a \mathrel{\mathcal L} b$ if $a$ and $b$ generate the same principal left ideal, that is, if $S^1a = S^1b$ where $S^1$ equals $S$, if $S$ has an identity element, and $S\cup \{1\}$ where $1\not\in S$ is an external identity element, otherwise. Dually, $a \mathrel{\mathcal R} b$ holds provided that $aS^1 = bS^1$. It is immediate that ${\mathcal L}$ and ${\mathcal R}$ are equivalence relations. Define $a\mathrel{\mathcal D} b$ if there is $c\in S$ such that $a \mathrel{\mathcal L} c$ and $c\mathrel{\mathcal R} b$, that is, ${\mathcal D}={\mathcal L} \circ {\mathcal R}$. It is well known  \cite[Proposition 2.1.3]{H} and easy to show that the relations ${\mathcal L}$ and ${\mathcal R}$ commute, and so ${\mathcal D} = {\mathcal L}\circ {\mathcal R} = {\mathcal R}\circ {\mathcal L}$. It follows that ${\mathcal D}$ is itself an equivalence relation and is the smallest equivalence relation containing ${\mathcal L}$ and ${\mathcal R}$. 

\subsection{The set ${\mathrm{Int}}(S)$} Let $S$ be an inverse semigroup and $s\in S$. Since $s= ss^{-1}s\in ss^{-1}S^1$ and $ss^{-1}\in sS^1$, we have $s \mathrel{\mathcal R} {\bf r}(s)$. Similarly, $s \mathrel{\mathcal L} {\bf d}(s)$. Since in an inverse semigroup every ${\mathcal L}$-class and every ${\mathcal R}$-class contains a unique idempotent \cite[Theorem 5.1.1]{H}, we have the following.

\begin{lemma} \label{lem:conc1} Let $s\in S$. Then ${\bf d}(s)$ is the unique idempotent which is ${\mathcal L}$-related with $s$ and ${\bf r}(s)$ is the inique idempotent which is ${\mathcal R}$-related with $s$. 
\end{lemma}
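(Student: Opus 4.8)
The plan is to split the statement into two independent parts---\emph{existence} of the asserted idempotent in the relevant Green class, and its \emph{uniqueness}---and to reuse the short computation already carried out immediately before the statement. The existence part requires no new work: the opening remarks of this subsection establish $s \mathrel{\mathcal R} {\bf r}(s)$ and $s \mathrel{\mathcal L} {\bf d}(s)$, so the idempotents ${\bf r}(s) = ss^{-1}$ and ${\bf d}(s) = s^{-1}s$ genuinely lie, respectively, in the $\mathcal R$-class and the $\mathcal L$-class of $s$. Thus the whole content of the lemma reduces to showing that \emph{no other} idempotent can be $\mathcal L$-related (respectively $\mathcal R$-related) to $s$.

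For uniqueness I would argue directly from the single structural feature of inverse semigroups, namely that idempotents commute. Suppose $e \in E(S)$ satisfies $e \mathrel{\mathcal L} s$. By transitivity of $\mathcal L$ we then have $e \mathrel{\mathcal L} {\bf d}(s)$, so, writing $f = {\bf d}(s)$, the elements $e$ and $f$ are two $\mathcal L$-related idempotents. Unwinding $S^1 e = S^1 f$ gives $e \in S^1 f$ and $f \in S^1 e$, that is, $e = af$ and $f = be$ for some $a,b \in S^1$; hence $ef = aff = af = e$ and $fe = bee = be = f$. Since $ef = fe$ in any inverse semigroup, we conclude $e = ef = fe = f = {\bf d}(s)$. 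The $\mathcal R$-side is entirely symmetric, or one may deduce it by applying the foregoing to $s^{-1}$, using ${\bf r}(s) = {\bf d}(s^{-1})$ together with the fact that inversion interchanges $\mathcal L$ and $\mathcal R$. As an alternative to this hands-on argument, one may simply invoke the cited fact \cite[Theorem 5.1.1]{H} that each $\mathcal L$-class and each $\mathcal R$-class of an inverse semigroup contains exactly one idempotent, which instantly upgrades the existence statement to uniqueness.

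I do not anticipate any genuine obstacle here; the lemma is a direct consequence of facts already assembled. The only mild subtlety worth spelling out is the passage from the Green-relation definition $S^1 e = S^1 f$ to the multiplicative identities $ef = e$ and $fe = f$, after which the commutativity of idempotents---the defining property distinguishing inverse semigroups---closes the argument at once.
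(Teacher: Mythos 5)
Your proposal is correct and follows essentially the same route as the paper: the paper also obtains existence from the remarks immediately preceding the lemma ($s \mathrel{\mathcal L} {\bf d}(s)$ and $s \mathrel{\mathcal R} {\bf r}(s)$) and then settles uniqueness by citing \cite[Theorem 5.1.1]{H}, exactly as in your alternative ending. Your inlined two-line argument ($e=ef=fe=f$ via commuting idempotents) is just the standard proof of that cited fact, so it adds self-containedness but not a genuinely different method.
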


We will be interested in the restriction of the Green's relation ${\mathcal D}$ to $E(S)$. 

\begin{lemma} \label{lem:charD} Let $e,f\in E(S)$. The following conditions are equivalent:
\begin{enumerate}
\item $e\mathrel{\mathcal D} f$,
\item there is $s\in S$ satisfying ${\bf d}(s) = e$ and ${\bf r}(s) = f$,
\item there is $s\in S$ satisfying ${\bf r}(s) = e$ and ${\bf d}(s) = f$,
\item there is $s\in S$ satisfying $s^{-1}fs = e$ and $ses^{-1}=f$.
\end{enumerate}
\end{lemma}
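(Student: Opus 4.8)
The plan is to treat condition~(2) as a hub and establish the three equivalences $(1)\Leftrightarrow(2)$, $(2)\Leftrightarrow(3)$ and $(2)\Leftrightarrow(4)$ separately; since $(2)$ occurs in each, this suffices to prove all four conditions equivalent. The workhorse connecting the Green's-relation condition to the domain/range condition is Lemma~\ref{lem:conc1}, together with the relations $s\mathrel{\mathcal L}{\bf d}(s)$ and $s\mathrel{\mathcal R}{\bf r}(s)$ recorded just before it.

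For $(1)\Leftrightarrow(2)$, I would simply unwind the definition of $\mathcal D$. If $e\mathrel{\mathcal D} f$, there is $c\in S$ with $e\mathrel{\mathcal L} c$ and $c\mathrel{\mathcal R} f$. By Lemma~\ref{lem:conc1} the only idempotent $\mathcal L$-related to $c$ is ${\bf d}(c)$ and the only idempotent $\mathcal R$-related to $c$ is ${\bf r}(c)$; since $e$ and $f$ are idempotents this forces $e={\bf d}(c)$ and $f={\bf r}(c)$, so $s=c$ witnesses~(2). Conversely, given $s$ with ${\bf d}(s)=e$ and ${\bf r}(s)=f$, the relations $s\mathrel{\mathcal L}{\bf d}(s)$ and $s\mathrel{\mathcal R}{\bf r}(s)$ give $e\mathrel{\mathcal L} s$ and $s\mathrel{\mathcal R} f$, i.e.\ $e\mathrel{\mathcal D} f$ with $c=s$.

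The equivalence $(2)\Leftrightarrow(3)$ is just passage to inverses: replacing $s$ by $s^{-1}$ interchanges domain and range, since ${\bf d}(s^{-1})={\bf r}(s)$ and ${\bf r}(s^{-1})={\bf d}(s)$. For $(2)\Rightarrow(4)$, if ${\bf d}(s)=e$ and ${\bf r}(s)=f$, then using $f=ss^{-1}$ and $e=s^{-1}s$ one gets $s^{-1}fs=s^{-1}s=e$ and $ses^{-1}=ss^{-1}=f$, so the same $s$ works. The one step needing a small idea is $(4)\Rightarrow(2)$: starting only from the conjugation identities $s^{-1}fs=e$ and $ses^{-1}=f$, I would set $t=fse$ and check, using idempotency of $e$ and $f$, that ${\bf d}(t)=t^{-1}t=es^{-1}f\cdot fse=e(s^{-1}fs)e=e$ and ${\bf r}(t)=tt^{-1}=fse\cdot es^{-1}f=f(ses^{-1})f=f$, so $t$ witnesses~(2).

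I expect $(4)\Rightarrow(2)$ to be the only nonroutine point, as the remaining implications are either definitional or immediate. The crux there is guessing the right auxiliary element $t=fse$ (the symmetrized conjugate), after which the two hypotheses collapse the computation exactly.
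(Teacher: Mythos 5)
Your proposal is correct and follows essentially the same route as the paper: the equivalences $(1)\Leftrightarrow(2)$ via Lemma~\ref{lem:conc1}, $(2)\Leftrightarrow(3)$ by passing to inverses, and the conjugation condition~(4) handled by direct computation. The only cosmetic difference is your choice of witness $t=fse$ for $(4)\Rightarrow(2)$ where the paper uses $se$ (proving $(4)\Rightarrow(3)$); your computation is clean and, if anything, reads more symmetrically than the paper's.
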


\begin{proof} If $e\mathrel{\mathcal D} f$, there is $s\in S$ such that $e\mathrel{\mathcal L} s \mathrel{\mathcal R} f$.
Lemma \ref{lem:conc1} implies that $e={\bf d}(s)$ and $f= {\bf r}(s)$. In addition, ${\bf d}(s) \mathrel{\mathcal{L}} s \mathrel{\mathcal R}{\bf r}(s)$ implies ${\bf d}(s) \mathrel{\mathcal D} {\bf r}(s)$. It follows that (1) $\Leftrightarrow$ (2). Conditions (2) and (3) are equivalent, too, since ${\bf d}(s) = {\bf r}(s^{-1})$ and ${\bf r}(s) = {\bf d}(s^{-1})$. To show (1) $\Rightarrow$ 
(4), we let $e\mathrel{\mathcal D} f$ and take $s$ with $e\mathrel{\mathcal L} s \mathrel{\mathcal R} f$. Multiplying $f=ss^{-1}$ from the left by $s^{-1}$ and from the right by $s$ we obtain $s^{-1}fs = e$ and similarly $ses^{-1}=f$. Finally, if these equalities hold, we have $f=sees^{-1} = {\mathbf r}(se)$ and ${\bf d}(se) = se(se)^{-1} = sees^{-1} = ses^{-1}=f$, and we have proved (4) $\Rightarrow$ (3).
\end{proof}

\subsection{The type monoid ${\mathrm{Typ}}(S)$}

Let $S$ be a Boolean inverse semigroup.  For $e\in E(S)$ let $[e]$ be the intersection of the ${\mathcal D}$-class of $e$ with $E(S)$ or, equivalently, the ${\mathcal D}|_{E(S)}$-class of $e$. Put differently,
$$[e] = \{f\in E(S)\colon \text{there is } s\in S \text{ with } {\bf d}(s) = e \text{ and } {\bf r}(s) = f\}.$$
Following \cite{W17}, the quotient set $E(S)/{\mathcal D}|_{E(S)}=\{[e] \colon e\in E(S)\}$ will be denoted by ${\mathrm{Int}}(S)$. 

We now define the partial operation $\oplus$ on ${\mathrm{Int}}(S)$ by letting the sum $[e] \oplus [f]$ be defined, if there are $e' \in [e]$ and $f' \in [f]$ such that $e'\perp f'$, in which case we put $[e]\oplus [f] = [e'\oplus f']$. We first check this is well defined.

\begin{lemma} If $e',e'' \in [e]$ and $f',f'' \in [f]$ are such that $e' \perp f'$ and $e'' \perp f''$ then $[e''\oplus f''] = [e'\oplus f']$. 
\end{lemma}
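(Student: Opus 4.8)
The plan is to produce a single element $w \in S$ whose domain idempotent is $e' \oplus f'$ and whose range idempotent is $e'' \oplus f''$; once such a $w$ is found, Lemma \ref{lem:charD} immediately gives $(e'\oplus f') \mathrel{\mathcal D} (e''\oplus f'')$, hence $[e'\oplus f'] = [e''\oplus f'']$, which is exactly the claim.

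First I would unwind the hypotheses. Since $[e]$ is a single $\mathcal D|_{E(S)}$-class we have $e' \mathrel{\mathcal D} e''$ and $f' \mathrel{\mathcal D} f''$, so by Lemma \ref{lem:charD} there exist $s,t \in S$ with ${\bf d}(s) = e'$, ${\bf r}(s) = e''$ and ${\bf d}(t) = f'$, ${\bf r}(t) = f''$. The candidate element is $w = s \oplus t$, and the remaining work is to check that this orthogonal join exists and has the prescribed domain and range. To see that the join exists, I would verify $s \perp t$ via Lemma \ref{lem:ort}(3): orthogonality of $s$ and $t$ is equivalent to ${\bf d}(s) \perp {\bf d}(t)$ together with ${\bf r}(s) \perp {\bf r}(t)$, and these hold since ${\bf d}(s) = e' \perp f' = {\bf d}(t)$ and ${\bf r}(s) = e'' \perp f'' = {\bf r}(t)$ by assumption. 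Thus $s \perp t$ and, by (BIS2a), $w = s \oplus t$ exists in $S$.

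It then remains to compute the two idempotents. Proposition \ref{prop:join} gives ${\bf d}(s \oplus t) = {\bf d}(s) \vee {\bf d}(t)$, and since ${\bf d}(s) = e'$ and ${\bf d}(t) = f'$ are orthogonal this join is $e' \oplus f'$. For the range I would pass to inverses: inversion preserves compatible joins, so $w^{-1} = s^{-1} \oplus t^{-1}$, and applying Proposition \ref{prop:join} to $s^{-1}, t^{-1}$ together with ${\bf r}(x) = {\bf d}(x^{-1})$ yields ${\bf r}(w) = {\bf r}(s) \vee {\bf r}(t) = e'' \oplus f''$. Feeding $w$ into Lemma \ref{lem:charD} then closes the argument.

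There is no substantial obstacle here: the whole proof is a bookkeeping exercise that assembles Lemmas \ref{lem:ort} and \ref{lem:charD} with Proposition \ref{prop:join}. The only point demanding slight care is the range computation, where one should obtain the ``dual'' of Proposition \ref{prop:join} through inversion rather than reproving it, and keep track that each join appearing is genuinely orthogonal so that $\vee$ may legitimately be written as $\oplus$.
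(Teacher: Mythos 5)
Your proposal is correct and follows essentially the same route as the paper: obtain $s,t$ from Lemma \ref{lem:charD}, deduce $s\perp t$ from Lemma \ref{lem:ort}(3), form $s\oplus t$, compute its domain and range via Proposition \ref{prop:join}, and conclude with Lemma \ref{lem:charD}. Your extra care in deriving the range formula as the dual of Proposition \ref{prop:join} via inversion is a point the paper glosses over (it simply cites Proposition \ref{prop:join} for both), but it is the same argument.
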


\begin{proof} Since $e' \mathrel{\mathcal D} e''$, there is $s\in S$ with ${\bf d}(s) = e'$ and ${\bf r}(s) = e''$, by Lemma \ref{lem:charD}. Similarly, there is 
$t\in S$ with ${\bf d}(t) = f'$ and ${\bf r}(t) = f''$. By Lemma \ref{lem:ort}(3) it follows that $s\perp t$. Hence there is $s\oplus t$. Moreover, ${\bf d}(s\oplus t) = {\bf d}(s) \oplus {\bf d}(t) = e'\oplus f'$ and ${\bf r}(s\oplus t) = {\bf r}(s) \oplus {\bf r}(t) = e''\oplus f''$ by Proposition \ref{prop:join}. Hence $[e' \oplus f'] = [e''\oplus f'']$, again  by Lemma \ref{lem:charD}. 
\end{proof}
 
If $S$ is a Boolean inverse semigroup, ${\mathrm{Int}}(S)$ is a partial commutative monoid (see \cite[Proposition 3.3]{KLLR}). It was introduced by Wallis \cite{Wal}. Moreover, ${\mathrm{Int}}(S)$ is conical and is a partial refinement monoid. 

We now define the {\em type monoid} ${\mathrm{Typ}}(S)$ of $S$. Let $\overline{{\mathrm{Int}}(S)} = \{\overline{e} \colon  [e]\in {\mathrm{Int}}(S)\}$ be a disjoint copy of the underlying set of ${\mathrm{Int}}(S)$. Elements of $\overline{{\mathrm{Int}}(S)}$ are thus of the form $\overline{e}$ with $e\in E(S)$ where $\overline{e} = \overline{f}$ whenever $e \mathrel{\mathcal{D}} f$.
We define ${\mathrm{Typ}}(S)$ to be the commutative monoid freely generated by  $\overline{{\mathrm{Int}}(S)}$ subject to the following relations:
\begin{enumerate}
\item[(T1)] $\overline{0}  = 0$,
\item[(T2)] $\overline{e} + \overline{f}  = \overline{g}$ whenever $[e]  \oplus [f]$ is defined in ${\mathrm{Int}}(S)$ and equals $[g]$.
\end{enumerate}

There is a map $\iota\colon {\mathrm{Int}}(S) \to {\mathrm{Typ}}(S)$ given by $[e] \mapsto \overline{e}$. By \cite[Corollary 4.1.4]{W17} ${\mathrm{Typ}}(S)$ is a
conical refinement monoid, and ${\mathrm{Int}}(S)$ is a lower interval of ${\mathrm{Typ}}(S)$. By Proposition~\ref{prop:only}, ${\mathrm{Typ}}(S)$ is the only commutative monoid which is generated by ${\mathrm{Int}}(S)$ and contains it as a lower interval.

\begin{example}\label{ex:typ} {\em Let us calculate the type monoid of ${\mathcal I}_n$. For idempotents $e,f$ we have $e \mathrel{{\mathcal D}} f$ if and only if they are of the same rank, so ${\mathrm{Int}}({\mathcal I}_n) = \{0,1,2,\dots, n\}$ where $k$ stands for the ${\mathcal D}|_{E({\mathcal I}_n)}$-class consisting of the idempotents of rank $k$. Observe that $k\oplus m$ is defined if and only if $k+m\leq n$. Since there are $n$ pairwise orthogonal idempotents of rank $1$, $k=\oplus_{i=1}^k 1$ for all $k=1,\dots, n$. It follows that ${\mathrm{Int}}({\mathcal I}_n)$ is generated by $1$ and, since there are no identities, the monoid ${\mathrm{Typ}}({\mathcal I}_n)$ is freely generated by $1$ and is thus isomorphic to ${\mathbb N}_{0}$.}
\end{example}
A Boolean inverse semigroup is called {\em orthogonally separating} \cite{KLLR} if the partial addition $\oplus$ is everywhere defined, so that ${\mathrm{Int}}(S)={\mathrm{Typ}}(S)$ is a monoid.

\subsection{Generalized rook matrices over a Boolean inverse semigroup}\label{subs:rook}
Let $S$ be a Boolean inverse semigroup. By $M_{\omega}(S)$ we denote the set of all matrices over $S$, called {\em generalized rook matrices}, with rows and columns indexed by ${\mathbb N}$, which have only finitely many non-zero entries satisfying the following conditions:
\begin{enumerate}
\item If $a$ and $b$ are in the same row, then ${\bf r}(a)\perp {\bf r}(b)$.
\item If $a$ and $b$ are in the same column, then ${\bf d}(a)\perp {\bf d}(b)$.
\end{enumerate}

We  can define the multiplication on $M_{\omega}(S)$ as the usual multiplication of finitary infinite matrices with $+$ replaced by $\vee$, as follows: if $A=(a_{ij})_{i,j\in {\mathbb N}}$ and  $B=(b_{ij})_{i,j\in {\mathbb N}}$ define $AB=C = (c_{ij})_{i,j\in {\mathbb N}}$ where $c_{ij}=0$ if the $i$-th row of $A$ or the $j$-th column of $B$ are zero, or, otherwise, $c_{ij}=\bigvee\limits_{k} a_{ik}b_{kj}$ where the join is over all elements $k\in {\mathbb N}$ satisfying $a_{ik}\neq 0$ or $b_{kj}\neq 0$. Such a join is necessarily finite.

\begin{lemma}  $AB$ is well defined and belongs to $M_{\omega}(S)$.
\end{lemma}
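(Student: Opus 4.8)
The statement to prove is that the product $AB$ of two generalized rook matrices is again a well-defined element of $M_\omega(S)$. "Well defined" here means two things: first, that each entry $c_{ij} = \bigvee_k a_{ik}b_{kj}$ is a genuine join in $S$ (i.e. the elements being joined are pairwise compatible, so that the join exists by (BIS2)), and second, that $C=AB$ satisfies the two orthogonality conditions defining $M_\omega(S)$, together with the finiteness of the support. The plan is to dispatch each of these in turn using the characterizations of compatibility and orthogonality from Lemma~\ref{lem:comp} and Lemma~\ref{lem:ort} and the behaviour of ${\bf d}$ and ${\bf r}$ under products (Lemma~\ref{lem:dom1}).

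First I would fix $i,j$ and examine the family $\{a_{ik}b_{kj}\}_k$ entering $c_{ij}$. To see the join exists I must check pairwise compatibility. Take $k\neq \ell$. Because $a_{ik}$ and $a_{i\ell}$ lie in the same row $i$ of $A$, condition (1) gives ${\bf r}(a_{ik})\perp {\bf r}(a_{i\ell})$; similarly $b_{kj}$ and $b_{\ell j}$ lie in the same column $j$ of $B$, so condition (2) gives ${\bf d}(b_{kj})\perp {\bf d}(b_{\ell j})$. I would then argue that $a_{ik}b_{kj}\perp a_{i\ell}b_{\ell j}$ by verifying ${\bf r}(a_{ik}b_{kj})\perp {\bf r}(a_{i\ell}b_{\ell j})$ and ${\bf d}(a_{ik}b_{kj})\perp {\bf d}(a_{i\ell}b_{\ell j})$ and invoking Lemma~\ref{lem:ort}(3). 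For the ranges, Lemma~\ref{lem:dom1} gives ${\bf r}(a_{ik}b_{kj})\leq {\bf r}(a_{ik})$ and ${\bf r}(a_{i\ell}b_{\ell j})\leq {\bf r}(a_{i\ell})$; since ${\bf r}(a_{ik})\perp {\bf r}(a_{i\ell})$ and orthogonality of idempotents is inherited downward (if $e\leq e'$, $f\leq f'$ and $e'f'=0$ then $ef=0$), the ranges are orthogonal. Dually, ${\bf d}(a_{ik}b_{kj})\leq {\bf d}(b_{kj})$ and the column condition on $B$ gives the orthogonality of the domains. So each summand pair is orthogonal, hence in particular compatible, and the join exists; being a finite join of pairwise orthogonal elements it is in fact an orthogonal join $\bigoplus_k a_{ik}b_{kj}$.

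Next I would verify that $C$ itself satisfies (1) and (2). For the row condition, fix a row $i$ and two columns $j\neq j'$ with $c_{ij},c_{ij'}\neq 0$; I want ${\bf r}(c_{ij})\perp {\bf r}(c_{ij'})$. Using ${\bf r}(c_{ij}) = {\bf r}\!\left(\bigoplus_k a_{ik}b_{kj}\right) = \bigoplus_k {\bf r}(a_{ik}b_{kj})$ (by Proposition~\ref{prop:join}, ${\bf r}$ distributes over orthogonal joins), and ${\bf r}(a_{ik}b_{kj})\leq {\bf r}(a_{ik})$, it suffices to compare these against the analogous terms ${\bf r}(a_{ik'}b_{k'j'})\leq {\bf r}(a_{ik'})$. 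When $k\neq k'$ the row condition on $A$ gives ${\bf r}(a_{ik})\perp {\bf r}(a_{ik'})$ and we are done. The genuinely interesting case is $k=k'$, i.e. the same entry $a_{ik}$ of $A$ feeding two different columns of $C$; here I would use that the orthogonality must come from $B$: the factors $b_{kj}$ and $b_{kj'}$ sit in the same row $k$ of $B$, so ${\bf r}(b_{kj})\perp {\bf r}(b_{kj'})$ by condition (1) for $B$, and I translate this into orthogonality of the full products via the identity ${\bf d}(a_{ik}b_{kj})\leq {\bf d}(b_{kj})={\bf r}(b_{kj}^{-1})$ together with Lemma~\ref{lem:ort}. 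The column condition for $C$ is entirely dual, using the column conditions of $A$ and $B$. I expect this $k=k'$ case — where one has to route the orthogonality through the \emph{other} matrix's condition — to be the main obstacle, since the naive row-of-$A$ argument alone does not cover it; everything else is a routine transcription of Lemmas~\ref{lem:dom1}, \ref{lem:comp}, \ref{lem:ort} and Proposition~\ref{prop:join}. Finiteness of the support of $C$ is immediate, since $c_{ij}$ can be nonzero only when row $i$ of $A$ and column $j$ of $B$ are both nonzero, and each matrix has finitely many nonzero entries.
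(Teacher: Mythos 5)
Your proposal is correct, and its first half coincides with the paper's \emph{entire} proof: the paper, too, fixes $i,j$, uses Lemma~\ref{lem:dom1} to get ${\bf d}(a_{ik}b_{kj})\leq {\bf d}(b_{kj})$ and ${\bf r}(a_{ik}b_{kj})\leq {\bf r}(a_{ik})$, deduces from the column condition on $B$ and the row condition on $A$ that the terms $a_{ik}b_{kj}$ are pairwise orthogonal, and concludes via Lemma~\ref{lem:ort}(3) that each entry $c_{ij}$ is a well-defined (in fact orthogonal) join. The difference is in scope: the paper declares this to be enough and stops, never verifying that $C=AB$ itself satisfies the two rook conditions or that its support is finite, whereas you carry out that verification. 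You also correctly isolate the only nontrivial point there: for entries $c_{ij}$, $c_{ij'}$ in the same row, the pairs of terms sharing the index $k=k'$ are not covered by the row condition on $A$, and the orthogonality must be routed through the row condition on $B$. One caveat: the identity you cite for that step, ${\bf d}(a_{ik}b_{kj})\leq {\bf d}(b_{kj})$, concerns domains, while the row condition on $C$ demands orthogonality of \emph{ranges}, and the domains ${\bf d}(b_{kj})$, ${\bf d}(b_{kj'})$ need not be orthogonal; the clean translation is via Lemma~\ref{lem:ort}(1). The row condition on $B$ gives ${\bf r}(b_{kj})\perp {\bf r}(b_{kj'})$, and then
$$
(a_{ik}b_{kj})^{-1}(a_{ik}b_{kj'}) \;=\; b_{kj}^{-1}\,{\bf d}(a_{ik})\,b_{kj'} \;=\; b_{kj}^{-1}\,{\bf d}(a_{ik})\,{\bf r}(b_{kj}){\bf r}(b_{kj'})\,b_{kj'} \;=\; 0,
$$
where the middle equality inserts $b_{kj}^{-1}=b_{kj}^{-1}{\bf r}(b_{kj})$ and $b_{kj'}={\bf r}(b_{kj'})b_{kj'}$ and uses that idempotents commute; Lemma~\ref{lem:ort}(1) then yields ${\bf r}(a_{ik}b_{kj})\perp {\bf r}(a_{ik}b_{kj'})$, as required. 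With this routine repair your argument is complete (the column condition is dual, and finiteness is immediate, as you say), and it in fact proves more of the stated lemma than the paper's own proof does.
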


\begin{proof} It is enough to suppose that the $i$-th row of $A$ and the $j$-th column of $B$ are non-zero and prove that the elements $a_{ik}b_{kj}$ are pairwise compatible, so their join is well defined. We prove that these elements are in fact pairwise orthogonal. Lemma \ref{lem:dom1} implies that ${\bf d}(a_{ik}b_{kj}) \leq {\bf d}(b_{kj})$. Then, for $k\neq t$, we have ${\bf d}(a_{ik}b_{kj}){\bf d}(a_{it}b_{tj})\leq {\bf d}(b_{kj}){\bf d}(b_{tj}) = 0$. Similarly, for $k\neq t$, ${\bf r}(a_{ik}b_{kj}){\bf r}(a_{it}b_{tj}) = 0$. Lemma \ref{lem:ort}(3) now implies that $a_{ik}b_{kj}$ and $a_{it}b_{tj}$ (where $t\neq k$) are orthogonal, and thus compatible, which completes the proof.
\end{proof}
 
\begin{proposition}[\cite{KLLR, Wal}] Let $S$ be a Boolean inverse semigroup.
\begin{enumerate}
\item $M_{\omega}(S)$ with the operation of the product of generalized rook matrices is a semigroup.
\item Idempotents of $M_{\omega}(S)$ are diagonal matrices whose diagonal entries are idempotents.
\item If $A=(a_{ij})_{i,j\in{\mathbb N}}$ then $A^{-1} = (a_{ji}^{-1})_{i,j\in {\mathbb N}}\in M_{\omega}(S)$. In addition, $A=AA^{-1}A$ and $A^{-1} = A^{-1}A A^{-1}$.
\item $M_{\omega}(S)$ is an inverse semigroup where $A^{-1}$ is the inverse of $A$.
\item $M_{\omega}(S)$ is a Boolean inverse semigroup.
\item The natural partial order is given by $A\leq B$ if and only if $a_{ij}\leq b_{ij}$ for all $i,j$.
\end{enumerate}
\end{proposition}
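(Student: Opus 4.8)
The plan is to prove the six assertions in the order (1), (3), (2), (4), (5), (6), since each later item rests on earlier ones. The guiding principle throughout is that every operation on $M_{\omega}(S)$ is computed entry by entry in $S$, the only subtlety being the bookkeeping of the finite orthogonal joins involved. For (1) I would compute $((AB)C)_{ij}$ and $(A(BC))_{ij}$ and show that each equals the single join $\bigvee_{k,l} a_{ik}b_{kl}c_{lj}$. Expanding $(AB)_{il}c_{lj}$ and $a_{ik}(BC)_{kj}$ by the distributivity of multiplication over finite orthogonal joins (recorded in Subsection~\ref{subs:bis}), both iterated joins have the same summands, so the whole point is that the doubly indexed family $\{a_{ik}b_{kl}c_{lj}\}_{k,l}$ is pairwise orthogonal, whence by associativity and commutativity of $\oplus$ the two orders of summation agree and give a well-defined entry of a matrix again in $M_{\omega}(S)$. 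This orthogonality is a short case analysis: Lemma~\ref{lem:dom1} gives ${\bf r}(a_{ik}b_{kl}c_{lj})\leq {\bf r}(a_{ik})$ and ${\bf d}(a_{ik}b_{kl}c_{lj})\leq {\bf d}(c_{lj})$, which by the row and column conditions settles all pairs with $k\neq k'$ and $l\neq l'$; the two remaining cases follow because one-sided multiplication sends orthogonal families to orthogonal families, applied to the orthogonal families produced by the well-definedness lemma for the products $AB$ and $BC$.

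For (3) I would check that $A^{-1}:=(a^{-1}_{ji})$ lies in $M_{\omega}(S)$ — since ${\bf r}(a^{-1})={\bf d}(a)$, the row condition for $A^{-1}$ is exactly the column condition for $A$ and conversely — and then compute $AA^{-1}$. Because $a_{ik}a^{-1}_{lk}=0$ for $i\neq l$ (as ${\bf d}(a_{ik})\perp {\bf d}(a_{lk})$ by the column condition), one obtains that $AA^{-1}$ is the diagonal matrix $\mathrm{diag}\bigl(\bigvee_k {\bf r}(a_{ik})\bigr)$ with idempotent entries; multiplying by $A$ recovers $A$, and symmetrically $A^{-1}AA^{-1}=A^{-1}$. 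Thus $M_{\omega}(S)$ is regular and $AA^{-1}$, $A^{-1}A$ are diagonal idempotents.

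Part (2) is where the real work lies and is, I expect, the main obstacle. One inclusion is immediate: a diagonal matrix with idempotent $S$-entries squares to itself. For the converse, assume $A^2=A$. Associativity gives $A\cdot(AA^{-1})=AA^{-1}$ and $(A^{-1}A)\cdot A=A^{-1}A$; comparing off-diagonal entries (using that $a_{ij}g=0$ iff ${\bf d}(a_{ij})g=0$, and that a join of idempotents in the generalized Boolean algebra $E(S)$ is $0$ only if each summand is) yields, for all $i\neq j$ and all $k$, the orthogonalities ${\bf d}(a_{ij})\perp {\bf r}(a_{jk})$ and ${\bf r}(a_{ij})\perp {\bf d}(a_{ki})$. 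The key step is then that for $i\neq j$ every term of $a_{ij}=(A^2)_{ij}=\bigvee_k a_{ik}a_{kj}$ vanishes: the product $a_{ik}a_{kj}$ is $0$ as soon as ${\bf d}(a_{ik})\perp {\bf r}(a_{kj})$, and this holds for $k=j$ and $k=i$ by the two displayed orthogonalities, while for the remaining $k$ (so $k\neq i$, and $(i,k)$ is off-diagonal) one applies the first orthogonality at position $(i,k)$ with its free index set to $j$. Hence $a_{ij}=0$ for $i\neq j$, and then $a_{ii}=a_{ii}^2$ is idempotent.

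With (2) in hand, (4) follows: $M_{\omega}(S)$ is regular by (3), and its idempotents, being diagonal with idempotent $S$-entries, multiply entrywise and commute because idempotents of $S$ commute; a regular semigroup with commuting idempotents is inverse, and $A^{-1}$ is then \emph{the} inverse. For (5) I would take the zero matrix as $0$ and identify $E(M_{\omega}(S))$ with the restricted direct power of $E(S)$ over $\mathbb{N}$ (finitely supported diagonal idempotent matrices), which is a generalized Boolean algebra under pointwise operations, giving (BIS1); for (BIS2) I would verify the equivalent axiom (BIS2a), translating $A\perp A'$ through the product formulas and Lemma~\ref{lem:ort} to get $a_{ij}\perp a'_{ij}$ in each position, checking that $(a_{ij}\oplus a'_{ij})$ again satisfies the row and column conditions, and that it is the join. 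Finally, for (6), if $A\leq B$ then $A=B\cdot{\bf d}(A)$ with ${\bf d}(A)=A^{-1}A$ diagonal, so $a_{ij}=b_{ij}f_j\leq b_{ij}$; conversely, if $a_{ij}\leq b_{ij}$ for all $i,j$, I would verify $B\cdot{\bf d}(A)=A$, the crucial point being that the cross terms $b_{ij}\,{\bf d}(a_{kj})$ with $k\neq i$ vanish because the column condition on $B$ forces ${\bf d}(a_{kj})\leq {\bf d}(b_{kj})\perp {\bf d}(b_{ij})$, leaving only $b_{ij}{\bf d}(a_{ij})=a_{ij}$.
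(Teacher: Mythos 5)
Your proposal is correct, but note that the paper itself gives no proof of this proposition: it is quoted from \cite{KLLR, Wal}, and the only argument supplied in the text is the preceding lemma that the product $AB$ is well defined. So there is no in-paper proof to match; what you have written supplies the missing details, and it does so using exactly the paper's toolkit (Lemma \ref{lem:dom1}, Lemma \ref{lem:ort}, Proposition \ref{prop:join}, and distributivity of multiplication over finite orthogonal joins from Subsection \ref{subs:bis}). Your ordering $(1),(3),(2),(4)$ is the right way to avoid circularity: the converse half of (2) is the genuinely nontrivial point, and your derivation of the orthogonalities ${\bf d}(a_{ij})\perp{\bf r}(a_{jk})$ and ${\bf r}(a_{ij})\perp{\bf d}(a_{ki})$ from $A(AA^{-1})=AA^{-1}$ and $(A^{-1}A)A=A^{-1}A$ (using $A^2=A$ and associativity), followed by the three-case annihilation of the terms $a_{ik}a_{kj}$, checks out. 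One organizational wrinkle: in (5) the verification that $(a_{ij}\oplus a'_{ij})$ is the least upper bound of $A$ and $A'$ tacitly uses the entrywise description of the natural partial order, which is item (6), proved afterward. Since your proof of (6) relies only on (1)--(4) (via ${\bf d}(A)=A^{-1}A$ and the product formula), this is fixed simply by establishing (6) before (5); as written it is a forward reference, not a mathematical gap.
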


By $\Delta(a_1,\dots, a_n)$ we denote the diagonal matrix in $M_{\omega}(S)$ whose first $n$ diagonal entries are $a_1,\dots, a_n$ and all the other entries are $0$.

\begin{lemma}\mbox{} \label{lem:D}
\begin{enumerate}
\item Let $A\in M_{\omega}(S)$ be a diagonal matrix with non-zero entries $e_i=a_{k_i,k_i}$, $1\leq i\leq n$. Then $A \mathrel{\mathcal D} \Delta(e_1,\dots, e_n)$.
\item Let $A,B\in M_{\omega}(S)$ be diagonal matrices with non-zero diagonal entries $e_1,\dots, e_n \in E(S)$. Then $A\mathrel{\mathcal D} B$.
\end{enumerate}
\end{lemma}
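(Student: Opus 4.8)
The plan is to reduce everything to the idempotent characterization of Green's relation $\mathcal D$ supplied by Lemma \ref{lem:charD}: two idempotents $e,f$ satisfy $e\mathrel{\mathcal D} f$ exactly when there is an element whose domain idempotent is $e$ and whose range idempotent is $f$. Since, by the preceding proposition, the idempotents of $M_\omega(S)$ are precisely the diagonal matrices with idempotent entries, it suffices in each part to exhibit a matrix $T\in M_\omega(S)$ whose domain ${\bf d}(T)=T^{-1}T$ equals one of the two matrices and whose range ${\bf r}(T)=TT^{-1}$ equals the other.

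For part (1) I would take $T$ to be the generalized rook matrix whose only non-zero entries are $t_{i,k_i}=e_i$ for $1\le i\le n$; informally, $T$ carries the idempotent sitting in the diagonal slot $(k_i,k_i)$ of $A$ up to row $i$. Because the column indices $k_1,\dots,k_n$ are distinct, each row and each column of $T$ contains at most one non-zero entry, so the orthogonality conditions (1)--(2) in the definition of $M_\omega(S)$ hold vacuously and $T\in M_\omega(S)$. Using the inversion formula $T^{-1}=(t_{ji}^{-1})_{i,j}$ from the proposition together with $e_i^{-1}=e_i$, one reads off $(T^{-1})_{k_i,i}=e_i$. A direct evaluation of the two products then yields $(T^{-1}T)_{k_i,k_i}=e_ie_i=e_i$ with all other entries zero, so $T^{-1}T=A$, and $(TT^{-1})_{i,i}=e_i$ with all other entries zero, so $TT^{-1}=\Delta(e_1,\dots,e_n)$. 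By Lemma \ref{lem:charD} this gives $A\mathrel{\mathcal D}\Delta(e_1,\dots,e_n)$.

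Part (2) is then immediate by transitivity: applying part (1) to $A$ and to $B$ separately produces $A\mathrel{\mathcal D}\Delta(e_1,\dots,e_n)$ and $B\mathrel{\mathcal D}\Delta(e_1,\dots,e_n)$, and since $\mathcal D$ is an equivalence relation we conclude $A\mathrel{\mathcal D} B$.

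I do not anticipate a genuine conceptual obstacle; the one point needing care is the bookkeeping in the two matrix products. Specifically, one must check that each join $\bigvee_k$ collapses to a single surviving term: in $T^{-1}T$ a contribution requires the column index of $T^{-1}$ to match the row index of $T$, which forces the two relevant labels to coincide precisely because the $k_i$ are distinct, and symmetrically for $TT^{-1}$. Confirming this, and re-verifying that $T$ meets the orthogonality requirements built into the notion of a generalized rook matrix, is the only substantive verification.
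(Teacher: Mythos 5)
Your proposal is correct and follows essentially the same route as the paper: the paper's proof of part (1) uses exactly the same witness matrix (non-zero entries $b_{i,k_i}=e_i$), verifies $B^{-1}B=A$ and $BB^{-1}=\Delta(e_1,\dots,e_n)$, and part (2) is likewise deduced by transitivity of $\mathcal D$. Your additional checks (membership of $T$ in $M_{\omega}(S)$ and the collapse of the joins in the matrix products) are the details the paper leaves implicit, and they are carried out correctly.
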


\begin{proof} (1) Let $B$ be the matrix with the only non-zero entries $b_{i,k_i} = e_i$, $1\leq i \leq n$. Then 
$B^{-1}B = A$ and $BB^{-1} = \Delta(e_1,\dots, e_n)$.

(2) This follows from (1) and transitivity of ${\mathcal D}$.
\end{proof}

\begin{lemma} \label{lem:d1} Let $e_i, e'_i \in E(S)$ and $e_i \mathrel{\mathcal D} e_i'$, $1\leq i\leq n$. Then $\Delta(e_1,\dots, e_n) \mathrel{\mathcal D} \Delta(e_1',\dots, e_n')$.
\end{lemma}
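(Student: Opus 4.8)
The plan is to reduce the statement to the case $n=1$ and then use transitivity of $\mathcal{D}$ to handle the general case. The key observation is that $\mathcal{D}$-equivalence of diagonal matrices should be built up one diagonal entry at a time, since each entry $e_i \mathrel{\mathcal D} e_i'$ is witnessed by an element of $S$ that we can place into a generalized rook matrix.

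First I would establish the base case: if $e \mathrel{\mathcal D} e'$ are idempotents of $S$, then $\Delta(e) \mathrel{\mathcal D} \Delta(e')$ in $M_\omega(S)$. By Lemma~\ref{lem:charD}, there is $s \in S$ with ${\bf d}(s)=e$ and ${\bf r}(s)=e'$. Let $C$ be the generalized rook matrix whose only non-zero entry is $s$ in position $(1,1)$. Then $C^{-1}C = \Delta({\bf d}(s)) = \Delta(e)$ and $CC^{-1} = \Delta({\bf r}(s)) = \Delta(e')$, which by the characterization of $\mathcal{D}$ (applied now inside the inverse semigroup $M_\omega(S)$, via Lemma~\ref{lem:charD}) gives $\Delta(e) \mathrel{\mathcal D} \Delta(e')$.

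Next I would handle the full diagonal simultaneously rather than inductively, as this is cleaner. For each $i$, Lemma~\ref{lem:charD} supplies $s_i \in S$ with ${\bf d}(s_i)=e_i$ and ${\bf r}(s_i)=e_i'$. Form the matrix $C \in M_\omega(S)$ whose non-zero entries are $s_i$ in position $(i,i)$ for $1 \leq i \leq n$. I should check that $C$ is a legitimate generalized rook matrix: since the $s_i$ sit in distinct rows and columns, the orthogonality conditions on rows and columns hold vacuously. A direct matrix computation then yields $C^{-1}C = \Delta({\bf d}(s_1),\dots,{\bf d}(s_n)) = \Delta(e_1,\dots,e_n)$ and $CC^{-1} = \Delta({\bf r}(s_1),\dots,{\bf r}(s_n)) = \Delta(e_1',\dots,e_n')$, using the formula $A^{-1}=(a_{ji}^{-1})$ for inverses and the fact that off-diagonal products vanish. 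Applying Lemma~\ref{lem:charD} inside $M_\omega(S)$ then gives $\Delta(e_1,\dots,e_n) \mathrel{\mathcal D} \Delta(e_1',\dots,e_n')$.

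The main obstacle, such as it is, is purely bookkeeping: verifying that $C^{-1}C$ and $CC^{-1}$ are exactly the claimed diagonal matrices. This amounts to computing $\bigvee_k (C^{-1})_{ik}C_{kj} = \bigvee_k s_k^{-1} s_j [k=j][i=j]$, which is non-zero only when $i=j$, giving ${\bf d}(s_j)$ on the diagonal, and symmetrically for $CC^{-1}$. Everything else follows from the already-established facts that $M_\omega(S)$ is an inverse semigroup with the stated inverse and that $\mathcal{D}$ is characterized by the existence of a linking element; no genuinely new difficulty arises.
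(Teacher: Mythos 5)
Your proposal is correct and uses essentially the same argument as the paper: take linking elements $s_i$ with ${\bf d}(s_i)=e_i$ and ${\bf r}(s_i)=e_i'$, form the diagonal matrix $A=\Delta(s_1,\dots,s_n)$, and observe $A^{-1}A=\Delta(e_1,\dots,e_n)$ and $AA^{-1}=\Delta(e_1',\dots,e_n')$. The extra verifications you include (that $A$ lies in $M_{\omega}(S)$ and the entrywise product computation) are fine but routine, and the preliminary $n=1$ case is subsumed by the simultaneous construction.
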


\begin{proof} Let $s_i\in S$ be such that ${\bf d}(s_i)=e_i$ and ${\bf r}(s_i) = e_i'$, $1\leq i\leq n$. For $A=\Delta(s_1,\dots s_n)$ we then have $A^{-1}A = \Delta(e_1,\dots, e_n)$ and $AA^{-1} = \Delta(e_1',\dots, e_n')$.
\end{proof}

\begin{lemma} \label{lem:d2} $M_{\omega}(S)$ is orthogonally separating and the addition in ${\mathrm{Typ}}(M_{\omega}(S)) = {\mathrm{Int}}(M_{\omega}(S))$ is given by $$[\Delta(e_1,\dots, e_n)] + [\Delta(f_1,\dots, f_k)] = [\Delta(e_1,\dots, e_n, f_1,\dots, f_k)].$$
\end{lemma}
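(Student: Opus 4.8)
The plan is to establish the addition formula directly and then read off orthogonal separation as an immediate consequence. The two ingredients I would rely on are that the idempotents of $M_{\omega}(S)$ are precisely the diagonal matrices with idempotent entries, and Lemma~\ref{lem:D}(2), which lets me reindex the non-zero diagonal entries of such a matrix freely within its ${\mathcal D}$-class.

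First I would note that every class in ${\mathrm{Int}}(M_{\omega}(S))$ has a representative of the form $[\Delta(e_1,\dots,e_n)]$. Indeed, an arbitrary idempotent of $M_{\omega}(S)$ is a diagonal matrix; listing its non-zero entries as $e_1,\dots,e_n$, Lemma~\ref{lem:D}(2) gives that this matrix is ${\mathcal D}$-related to $\Delta(e_1,\dots,e_n)$. Hence it suffices to add two classes of this shape.

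Given $\Delta(e_1,\dots,e_n)$ and $\Delta(f_1,\dots,f_k)$, the key move is to slide the second matrix off the support of the first. I would let $G$ be the diagonal matrix with $g_{n+i,\,n+i}=f_i$ for $1\le i\le k$ and all other entries $0$. By Lemma~\ref{lem:D}(2), $G\mathrel{\mathcal D}\Delta(f_1,\dots,f_k)$, so $[G]=[\Delta(f_1,\dots,f_k)]$. Since $\Delta(e_1,\dots,e_n)$ and $G$ are idempotents whose non-zero diagonal positions are disjoint, their product is the zero matrix, and therefore $\Delta(e_1,\dots,e_n)\perp G$. By the definition of $\oplus$ on ${\mathrm{Int}}$, the sum $[\Delta(e_1,\dots,e_n)]\oplus[\Delta(f_1,\dots,f_k)]$ is thus defined and equals $[\Delta(e_1,\dots,e_n)\oplus G]$, the independence of this value from the chosen representatives being guaranteed by the well-definedness of $\oplus$ on ${\mathrm{Int}}$ proved earlier.

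Finally I would compute the orthogonal join. Because the natural partial order on $M_{\omega}(S)$ is entrywise, the join of two diagonal matrices with disjoint supports is simply the diagonal matrix carrying all their entries, namely $\Delta(e_1,\dots,e_n,f_1,\dots,f_k)$; this gives the stated formula. Since the sum of any two classes is now seen to exist, $M_{\omega}(S)$ is orthogonally separating and ${\mathrm{Int}}(M_{\omega}(S))={\mathrm{Typ}}(M_{\omega}(S))$. I do not expect a genuine obstacle: the only step needing a moment's care is confirming that the entrywise description of $\le$ forces the orthogonal join of disjointly-supported diagonal matrices to be exactly the concatenated diagonal matrix, which is immediate once the order is unwound.
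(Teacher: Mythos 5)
Your proof is correct and follows essentially the same route as the paper: you shift $\Delta(f_1,\dots,f_k)$ to a diagonal matrix supported on positions $n+1,\dots,n+k$ via Lemma~\ref{lem:D}(2), observe orthogonality from disjoint supports, and identify the orthogonal join with the concatenated diagonal matrix. The only difference is that you spell out the reduction of arbitrary idempotents to the form $\Delta(e_1,\dots,e_n)$ and the entrywise computation of the join, which the paper leaves implicit.
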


\begin{proof} Let $A= \Delta(e_1,\dots, e_n)$ and $B = \Delta(f_1,\dots, f_k)$. By Lemma \ref{lem:D}(2) we have that 
$B \mathrel{\mathcal D} B'$ where $B' = \Delta(\underbrace{0,\dots, 0}_n, f_1,\dots, f_k)$. Since $A\perp B'$ and $A\oplus B' = \Delta(e_1,\dots, e_n, f_1,\dots, f_k)$, the statement follows.
\end{proof}

We now aim to give a presentation of ${\mathrm{Typ}(M_{\omega}(S)})$ by generators and relations. We put $X=E(S)/{\mathcal D}|_{E(S)}$. The elements of $X$ are thus $[e] = \{f\in E(S)\colon f\mathrel{\mathcal D} e\}$ where $e\in E(S)$. Let $F(X)$ be the free commutative monoid on $X$.

Let $A\in M_{\omega}(S)$ and suppose that the left upper $n\times n$ corner of $A$ is the matrix $A'$. If all non-zero entries of $A$ are in $A'$, we will sometimes write $A'$ for $A$, to simplify notation. Thus, by saying that $A'$ is in $M_{\omega}(S)$ (where $A'$ is an $n\times n$ matrix) we mean that $A'$ is the upper left corner of some $A\in M_{\omega}(S)$ all whose non-zero entries are in $A'$.

\begin{theorem}\label{th:type} ${\mathrm{Typ}(M_{\omega}(S)})$ is an $X$-generated commutative monoid via the map $[e] \mapsto [\Delta(e)]$ subject to the relations 
\begin{equation}\label{eq:def_rel}
[e] + [f] =[g]  \text{ whenever } [e] \oplus [f] = [g] \text{ in } {\mathrm{Int}}(S). 
\end{equation}
Consequently, the monoid ${\mathrm{Typ}}(S)$ is isomorphic to the monoid ${\mathrm{Typ}(M_{\omega}(S)}) = {\mathrm{Int}}(M_{\omega}(S))$.
\end{theorem}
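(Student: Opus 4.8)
The plan is to exhibit the presented monoid $P := F(X)/{\equiv}$, where $\equiv$ is the monoid congruence on $F(X)$ generated by the relations \eqref{eq:def_rel}, and to show that the assignment $[e]\mapsto[\Delta(e)]$ induces an isomorphism $\bar\Phi\colon P\to {\mathrm{Typ}}(M_{\omega}(S))$, keeping in mind that ${\mathrm{Typ}}(M_{\omega}(S))={\mathrm{Int}}(M_{\omega}(S))$ is already a total monoid by Lemma \ref{lem:d2}. First I would check that $\bar\Phi$ is a well-defined surjective homomorphism. It is well defined on generators by Lemma \ref{lem:d1} (the case $n=1$). It respects \eqref{eq:def_rel}: if $[e]\oplus[f]=[g]$ in ${\mathrm{Int}}(S)$, witnessed by $e'\in[e]$, $f'\in[f]$ with $e'\perp f'$ and $e'\oplus f'\in[g]$, then the rook matrix $B$ with $b_{11}=e'$ and $b_{21}=f'$ satisfies $B^{-1}B=\Delta(e'\oplus f')$ and $BB^{-1}=\Delta(e',f')$; combined with Lemma \ref{lem:d1} this yields $\Delta(e,f)\mathrel{\mathcal D}\Delta(e',f')\mathrel{\mathcal D}\Delta(e'\oplus f')\mathrel{\mathcal D}\Delta(g)$, while $\bar\Phi([e]+[f])=[\Delta(e,f)]$ by Lemma \ref{lem:d2}, so $\bar\Phi([e]+[f])=\bar\Phi([g])$. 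Finally $\bar\Phi$ is surjective, since every idempotent of $M_{\omega}(S)$ is diagonal and hence, by Lemma \ref{lem:D}(1), $\mathcal D$-equivalent to some $\Delta(e_1,\dots,e_n)$, whose class equals $\bar\Phi\big(\textstyle\sum_i[e_i]\big)$ by Lemma \ref{lem:d2}.

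To invert $\bar\Phi$ I would define $\Psi\colon {\mathrm{Int}}(M_{\omega}(S))\to P$ by sending the class of a diagonal idempotent with nonzero entries $e_1,\dots,e_n$ to $[e_1]+\cdots+[e_n]$; this is additive by Lemma \ref{lem:d2}, and $\Psi\bar\Phi$ and $\bar\Phi\Psi$ visibly fix the respective generating sets, so it remains only to prove that $\Psi$ is well defined. This is the heart of the matter and the step I expect to be the main obstacle. Suppose $\Delta(e_1,\dots,e_n)\mathrel{\mathcal D}\Delta(f_1,\dots,f_m)$; by Lemma \ref{lem:charD} there is $T\in M_{\omega}(S)$ with ${\bf d}(T)=\Delta(e_1,\dots,e_n)$ and ${\bf r}(T)=\Delta(f_1,\dots,f_m)$, so $T={\bf r}(T)\,T\,{\bf d}(T)$ has all nonzero entries $t_{ij}$ confined to $1\le i\le m$, $1\le j\le n$. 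Computing $T^{-1}T$ and $TT^{-1}$ and invoking the rook conditions together with Lemma \ref{lem:ort}, I would read off that the domain idempotents $d_{ij}:={\bf d}(t_{ij})$ are pairwise orthogonal down each column $j$ with $\bigoplus_i d_{ij}=e_j$, while the range idempotents $r_{ij}:={\bf r}(t_{ij})$ are pairwise orthogonal along each row $i$ with $\bigoplus_j r_{ij}=f_i$.

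With this array in hand the conclusion is a double-counting argument carried out inside $P$. Each $t_{ij}$ gives $d_{ij}\mathrel{\mathcal D} r_{ij}$, whence $[d_{ij}]=[r_{ij}]$ in $X$. The orthogonal column decompositions give $\sum_i[d_{ij}]=[e_j]$ in $P$ by repeated use of \eqref{eq:def_rel}, and the orthogonal row decompositions give $\sum_j[r_{ij}]=[f_i]$. Summing over all cells in two ways then yields
\[
\sum_{j=1}^{n}[e_j]=\sum_{i,j}[d_{ij}]=\sum_{i,j}[r_{ij}]=\sum_{i=1}^{m}[f_i]
\]
in $P$, which is precisely the well-definedness of $\Psi$. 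Hence $\bar\Phi$ is an isomorphism, proving the first assertion.

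Finally I would deduce the ``Consequently'' clause. By construction ${\mathrm{Typ}}(S)$ is generated by $\overline{{\mathrm{Int}}(S)}$ (a set indexed by $X$, since $\overline e=\overline f$ iff $e\mathrel{\mathcal D} f$) subject to (T1) and (T2), and (T2) is exactly \eqref{eq:def_rel}. It therefore suffices to observe that (T1) is redundant in the presence of \eqref{eq:def_rel}: taking $f=0$ and using $[e]\oplus[0]=[e]$ in ${\mathrm{Int}}(S)$ (note $[0]=\{0\}$), relation \eqref{eq:def_rel} forces $[0]$ to be a monoid identity, hence $[0]=0$ by uniqueness of the identity. Thus ${\mathrm{Typ}}(S)$ and $P$ have the same presentation, giving ${\mathrm{Typ}}(S)\cong P\cong {\mathrm{Typ}}(M_{\omega}(S))={\mathrm{Int}}(M_{\omega}(S))$.
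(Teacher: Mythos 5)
Your proposal is correct and takes essentially the same route as the paper: your verification that the relations \eqref{eq:def_rel} hold via explicit $2\times 1$ rook matrices, and your double-counting over the entries of $T$ (column sums giving the $[e_j]$, row sums giving the $[f_i]$, matched termwise via ${\bf d}(t_{ij})\mathrel{\mathcal D}{\bf r}(t_{ij})$) is exactly the paper's argument that every relation holding in ${\mathrm{Typ}}(M_{\omega}(S))$ follows from \eqref{eq:def_rel}, merely packaged as well-definedness of an explicit inverse homomorphism $\Psi$ rather than as completeness of the relations. Your observation that (T1) is redundant given \eqref{eq:def_rel} is a small point the paper leaves implicit.
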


\begin{proof} The map $[e] \mapsto [\Delta(e)]$ is well defined by Lemma \ref{lem:d1}. 

Let us show that relations \eqref{eq:def_rel} hold in ${\mathrm{Typ}}(M_{\omega}(S))$. That is, $[\Delta(e)] + [\Delta(f)] = [\Delta(e\oplus f)]$ whenever $e\perp f$ in $E(S)$.
By Lemma \ref{lem:d2} we have $[\Delta(e)] + [\Delta(f)] = [\Delta(e,f)]$ so it suffices to show that $\Delta(e,f) \mathrel{\mathcal D} \Delta(e\oplus f)$. Let $s,t \in S$ be such that ${\bf d}(s) = e$ and ${\bf d}(t) =f$. Then $A = \begin{bmatrix} s & 0 \\ t & 0\end{bmatrix} \in M_{\omega}(S)$ with $A^{-1} = \begin{bmatrix} s^{-1} & t^{-1} \\0 & 0\end{bmatrix}$. The statement follows, since $AA^{-1} = \Delta(e,f)$ and $A^{-1}A = \Delta(e\oplus f)$. Applying induction, it follows that $[\Delta(e_1)]+\cdots + [\Delta(e_n)] = [\Delta(\oplus_{i=1}^n e_i)]$ provided that the $e_i$'s are pariwise orthogonal idempoitents of $S$.

We finally show that any relation that holds in ${\mathrm{Typ}}(M_{\omega}(S))$ in fact follows from the relations \eqref{eq:def_rel}.
So suppose that $\sum k_{i}[e_i]$, $\sum n_{j}[f_j] \in F(X)$ are such that 
\begin{equation}\label{eq:aux1}
\sum k_i[\Delta(e_i)] =  \sum n_{j}[\Delta(f_j)] 
\end{equation}
in ${\mathrm{Typ}}(M_{\omega}(S))$. This means that $$E=\Delta(\underbrace{e_1,\dots, e_1}_{k_1}, \underbrace{e_2,\dots, e_2}_{k_2}, \dots) \mathrel{\mathcal D} \Delta(\underbrace{f_1,\dots, f_1}_{n_1}, \underbrace{f_1,\dots, f_1}_{n_2}, \dots) = F.$$ So there is a matrix $A\in M_{\omega}(S)$ with $A^{-1}A = E$ and $AA^{-1} = F$. Suppose that all the entries of $A$, but the upper left corner of size $n\times k$, are zeros. It follows that
$$
E= \Delta(\oplus_{i=1}^n{\bf d}(a_{i1}), \dots, \oplus_{i=1}^n {\bf d}(a_{ik})),
$$
$$
F = \Delta(\oplus_{j=1}^k {\bf r}(a_{1j}),\dots, \oplus_{j=1}^k {\bf r}(a_{nj})).
$$
So we can rewrite \eqref{eq:aux1} as
$$
\sum_{j=1}^k [\Delta(\oplus_{i=1}^n{\bf d}(a_{ij}))]  = \sum_{i=1}^n[\Delta(\oplus_{j=1}^k {\bf r}(a_{ij}))].
$$
Using \eqref{eq:def_rel} this rewrites to
$$
\sum_{i=1}^n\sum_{j=1}^k [\Delta({\bf d}(a_{ij}))] = \sum_{i=1}^n\sum_{j=1}^k [\Delta({\bf r}(a_{ij}))],
$$
which holds by Lemma \ref{lem:d1}, using the fact that ${\bf d}(a_{ij}) \mathrel{\mathcal D} {\bf r}(a_{ij})$ in $S$. This completes the proof.
\end{proof}

Note that the type monoid ${\mathrm{Typ}}(S)$ was defined in \cite{KLLR} as the commutative monoid ${\mathrm{Int}}(M_{\omega}(S))$. However, its universal property is not treated in \cite{KLLR}. Here we followed \cite{W17} and defined ${\mathrm{Typ}}(S)$ as the universal commutative monoid $U({\mathrm{Int}}(S))$. The statement of Theorem \ref{th:type} is proved in \cite[Proposition 4.2.5]{W17}, where it is derived from a more general result. Here we provided a different and direct proof. Let us briefly outline the more general setting treated in \cite{W17}. 

An additive homomorphism $f\colon S\to T$ of Boolean inverse semigroups gives rise to a homomorphism of partial monoids $\overline{f}\colon {\mathrm{Int}}(S) \to {\mathrm{Int}}(T)$, $\overline{f}([e]) = [f(e)]$. Compofsing $\overline{f}$ with $\iota_T\colon {\mathrm{Int}}(T)\to {\mathrm{Typ}}(T)$, we get the homomorphism $\iota_T\overline{f}\colon {\mathrm{Int}}(S)\to {\mathrm{Typ}}(T)$. Since ${\mathrm{Typ}}(S)$ is the additive envelope of ${\mathrm{Int}}(S)$, the universal property (Proposition \ref{prop:univ}(2)) implies that there is $g\colon {\mathrm{Typ}}(S) \to {\mathrm{Typ}}(T)$ such that $g\iota_S =  \iota_T\overline{f}$. We put $g={\mathrm{Typ}}(f)$. It follows that the assignment $S\mapsto {\mathrm{Typ}}(S)$ gives rise to a functor, denoted by ${\mathrm{Typ}}$, from the category of Boolean inverse semigroups and additive homomorphisms to the category of commutative monoids and homomorphisms. 
 
 Suppose that $\iota$ is the inclusion map of Boolean inverse semigroups from $S$ into $T$ and let us find a sufficient condition for $\overline{\iota}$ to be injective. If $\overline{\iota}([e]) = \overline{\iota}([g])$ then $e=\iota(e) \mathrel{\mathcal D} \iota(g)=g$ in $T$, so there is $s\in T$ with $e = {\bf d}(s)$ and $g={\bf r}(s)$. Then $s = gse \in STS$. If we assume that $STS \subseteq S$ (as $s=ss^{-1}s\in STS$, the opposite inclusion holds automatically), we obtain $s\in S$ and then $e \mathrel{\mathcal D} g$ in $S$ so that $\overline{\iota}$ is injective. An inverse subsemigroup $S$ of $T$ is called a {\em quasi-ideal} if $S=STS$. We have seen that if $S$ is a quasi-ideal of $T$ then $\overline{\iota}$ is injective. Furthermore, it is a $V$-homomorphism. Indeed, if $p\in E(S)$ and $[p] = [q]\oplus [r]$ in ${\mathrm{Int}}(T)$ then there are idempotents $p'\in [p], q' \in [q]$ and $r'\in [r]$ such that $q'\perp r'$ and $p'=q'\oplus r'$ in $T$. But then $q' = p'q'p' \in STS = S$ and similarly $r'\in S$. So $[p] = [q]\oplus [r]$ in ${\mathrm{Int}}(S)$, and $\overline{\iota}$ is a $V$-homomorphism.
 
 Now, if $S$ is a quasi-ideal of $T$, we know that $\overline{f}$ is injective, and \cite[Proposition 2.2.5]{W17} implies that ${\mathrm{Typ}}(f)$ is injective. Moreover, it is shown in \cite[Theorem 4.2.2]{W17} that ${\mathrm{Typ}}(f)$ is an isomorphism if and only if $T$ is an {\em additive enlargement} \cite[Proposition 3.1.20, Definition 3.1.21]{W17} of $S$, which means that the additive ideal, generated by $S$, coincides with $T$. 
 
Observe that, for a Boolean inverse semigroup $S$, there is an embedding $h\colon S \to M_{\omega}(S)$, given by $s\mapsto \Delta(s)$, and it is easy to check that $M_{\omega}(S)$ is an additive enlargement of $h(S)$. This leads to the statement of Theorem \ref{th:type}.

\section{The connection of ${\mathrm{Typ}}(S)$ with the monoid $V(K \langle S\rangle)$}\label{s:connection}

Let $R$ be a (not necessarily commutative) ring. Two idempotents $p,q\in R$ are said to be {\em Murray - von Neumann equivalent}, provided that there exist $x\in pRq$ and $y\in qRp$ such that $p=xy$ and $q=yx$. Let $M_n(R)$ be the ring of all $n\times n$ matrices over $R$ and consider the diagonal embeddings $A\mapsto \begin{bmatrix} A & 0 \\ 0 & 0\end{bmatrix}$ of $M_n(R)$ into $M_{n+1}(R)$. Let $M_{\infty}(R) = \lim\limits_{\longrightarrow} M_n(R)$ with respect to these embeddings. By $V(R)$ we denote the set of equivalence classes of idempotent matrices in  $M_{\infty}(R)$. If $[p], [q] \in V(R)$ we put $[p] + [q] = \left[\begin{bmatrix} p& 0 \\ 0 & q\end{bmatrix}\right]$. With this addition $V(R)$ is a commutative monoid, which encodes the {\em nonstable $K$-theory} of $R$ \cite{AP07,W17}. 
Alternatively, $V(R)$ can be described as the monoid of isomorphism classes of finitely generated right projective $R$-modules where the addition is given by the direct sum of representatives, see \cite{AP07} for details.

Let $K$ be a commutative ring with unit and $S$ a Boolean inverse semigroup. By $K\langle S\rangle$ we denote the quotient of the contracted semigroup algebra $K_0S$ by the ideal ${\mathcal T}_K(S)$ called the {\em tight ideal}  (see \cite{StSz}), generated by all $p-(q\oplus r)$ where $p,q,r\in S$, $q\perp r$ and $q\oplus r=p$. We note that the algebra $K\langle S\rangle$ is isomorphic to the algebra $K_0{\mathcal B}_{tight}(S)$ of the {\em tight Booleanization} ${\mathcal B}_{tight}(S)$ of $S$ (this follows, e.g., from \cite[Corollary 2.15]{StSz}).

\begin{example}\label{ex:in} {\em If $S = {\mathcal I}_n$ then $K\langle S\rangle$ is isomorphic to the full matrix algebra $M_n(K)$. Indeed, atoms of ${\mathcal I}_n$ are the rank one elements and are of the form $e_{ij}$ with ${\mathrm{dom}}(e_{ij}) = j$ and $e_{ij}(j) = i$ for all $1\leq i,j\leq n$. Since every element of ${\mathcal I}_n$ is an orthogonal join of atoms, the elements $e_{ij}$ form the basis of $K\langle S\rangle$. Mapping each $e_{ij}$ to a respective matrix unit leads to the desired isomorphism. }
\end{example}

Following \cite[Definition 6.3.2]{W17}, a map $f$ from a Boolean inverse semigroup $S$ to a ring $R$ is called an {\em additive measure} if it is a semigroup embedding of $S$ into a multiplicative semigroup of $R$ and $f(x\oplus y) = f(x) + f(y)$ whenever $x\perp y$ in $S$ (in particular, $f(0)=0$). Then the canonical map $S \to K \langle S\rangle$ is universal among all the additive measures from $S$ to a $K$-algebra. 

Let  $f\colon S\to R$ be an additive measure and suppose that $a,b\in E(S)$ are such that $a\mathrel{\mathcal D} b$. Then $a=s^{-1}s$ and $b=ss^{-1}$ for some $s\in S$, so the elements $f(a)$ and $f(b)$ are Murray-von Neumann equivalent in $R$. Hence for each $[e] \in E(S)/{\mathcal D|_{E(S)}}$ there is a well defined element $[f(e)]_R\in V(R)$. This map extends to the monoid homomorphism from the free commutative monoid on ${\mathrm{Int}}(S)$ to $V(R)$ and, due to the universality of ${\mathrm{Typ}}(S)$, to the monoid homomorphism ${\mathrm{Typ}}(S) \to V(R)$ which we denote by ${\bf f}$.

A Boolean inverse semigroup is called {\em matricial} if it is a finite direct product of finite symmetric inverse monoids, or, equivalently, if it is a finite fundamental Boolean inverse semigroup.
It is called {\em locally matricial} if it is a directed colimit of finite fundamental Boolean inverse semigroups.

\begin{proposition}\cite[Proposition 6.7.3]{W17} \label{prop:lm} Let $S$ be locally matricial. Then the canonical map ${\bf f}\colon {\mathrm{Typ}}(S) \to V(K \langle S\rangle)$ is an isomorphism.
\end{proposition}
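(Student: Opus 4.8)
The plan is to prove the statement first for the matricial building blocks by an explicit computation, and then to transport it to all locally matricial $S$ by a directed-colimit argument, using that $\mathrm{Typ}$, $K\langle-\rangle$ and $V$ all commute with directed colimits and that $\mathbf{f}$ is natural in $S$.

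I would begin with $S=\mathcal{I}_n$. By Example \ref{ex:typ}, $\mathrm{Typ}(\mathcal{I}_n)\cong\mathbb{N}_0$, freely generated by the class $[e]$ of a rank-one idempotent $e$; by Example \ref{ex:in}, $K\langle\mathcal{I}_n\rangle\cong M_n(K)$, under which the canonical additive measure sends $e$ to a rank-one idempotent (a matrix unit) of $M_n(K)$. Since $V(M_n(K))\cong V(K)$ by Morita invariance of $V$, and $V(K)\cong\mathbb{N}_0$ when $K$ is a field, the map $\mathbf{f}$ carries the free generator of $\mathbb{N}_0$ to a free generator of $V(M_n(K))\cong\mathbb{N}_0$ and is thus an isomorphism. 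For an arbitrary matricial $S=\mathcal{I}_{n_1}\oplus\cdots\oplus\mathcal{I}_{n_k}$ I would exploit the orthogonal block structure: no element of $S$ has its domain idempotent in one block and its range idempotent in another, so by Lemma \ref{lem:charD} the relation $\mathcal{D}$ on $E(S)$ respects the blocks and $\mathrm{Int}(S)\cong\prod_i\mathrm{Int}(\mathcal{I}_{n_i})$, whence $\mathrm{Typ}(S)\cong\mathbb{N}_0^{\,k}$. On the ring side $K\langle S\rangle\cong\prod_i M_{n_i}(K)$ and $V(\prod_i R_i)\cong\prod_i V(R_i)$, giving $V(K\langle S\rangle)\cong\mathbb{N}_0^{\,k}$; as $\mathbf{f}$ respects these product decompositions, the matricial case reduces to the single-block case already settled.

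For the general case I would write $S=\varinjlim S_i$ as a directed colimit of matricial $S_i$ in the category of Boolean inverse semigroups and additive homomorphisms. Three commutation facts then do the work. First, $V$ commutes with directed colimits of rings (a standard fact of nonstable $K$-theory, see \cite{AP07}). Second, $K\langle-\rangle$ commutes with directed colimits: the contracted semigroup algebra is itself a colimit-preserving construction, and one checks that the tight ideal of $\varinjlim S_i$ is the colimit of the tight ideals, so $K\langle S\rangle\cong\varinjlim K\langle S_i\rangle$. Third, $\mathrm{Typ}$ commutes with directed colimits. Granting these, naturality of $\mathbf{f}$ (it is assembled from the universal additive measure and the universal property of $\mathrm{Typ}$, hence is natural in $S$) yields $\mathbf{f}_S=\varinjlim\mathbf{f}_{S_i}$. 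A directed colimit of isomorphisms of commutative monoids is an isomorphism, and each $\mathbf{f}_{S_i}$ is an isomorphism by the matricial case; therefore $\mathbf{f}_S$ is an isomorphism.

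The main obstacle, I expect, is establishing that $\mathrm{Typ}$ preserves directed colimits. Here I would factor $\mathrm{Typ}=U\circ\mathrm{Int}$, where $U$ is the enveloping-monoid functor of Proposition \ref{prop:univ}; being a left adjoint, $U$ automatically preserves all colimits, so the issue is reduced to showing $\mathrm{Int}(\varinjlim S_i)\cong\varinjlim\mathrm{Int}(S_i)$. This amounts to the bookkeeping that every idempotent, every witnessing element for a $\mathcal{D}$-relation, and every orthogonal decomposition occurring in $S=\varinjlim S_i$ already lives inside some $S_i$; directedness of the system is exactly what makes this finite-support argument go through. The parallel, but lighter, verification that the tight ideal of the colimit is the colimit of the tight ideals is the other place requiring care.
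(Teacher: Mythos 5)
Your proposal is correct and takes essentially the same route as the paper: both reduce to the case $S=\mathcal{I}_n$ using that $\mathrm{Typ}$, $V$ and $K\langle-\rangle$ preserve finite direct products and directed colimits, and then identify $\mathrm{Typ}(\mathcal{I}_n)$ and $V(M_n(K))$ with $\mathbb{N}_0$. The only difference is one of detail: the paper cites \cite[Proposition 4.1.9]{W17} for the preservation of directed colimits by $\mathrm{Typ}$, whereas you sketch that verification (via $\mathrm{Typ}=U\circ\mathrm{Int}$ and a finite-support argument) yourself.
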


\begin{proof} We make use of the fact that all the functors ${\mathrm{Typ}}$, $V$ and $K\langle\_\rangle$ preserve directed colimits and finite direct products. For the functor ${\mathrm{Typ}}$ this is proved in \cite[Proposition 4.1.9]{W17}, for the functor $V$ this is well known and for the functor $K\langle\_\rangle$ it is straightforward to show. 

So it suffices to consider the case of a finite fundamental Boolean inverse semigroup. As is shown in Proposition \ref{prop:finite_fundamental}, such a semigroup is a direct sum of finite symmetric inverse monoids, so it is enough to consider the case where $S$ is some ${\mathcal I}_n$. We have ${\mathrm{Typ}}({\mathcal I}_n) \simeq {\mathbb N}_{0}$, see Example \ref{ex:typ}. The statement follows, since also $V(M_n(K))$ is isomorphic to ${\mathbb N}_{0}$ and $K\langle {\mathcal I}_n\rangle \simeq M_n(K)$.
\end{proof}
Since locally matricial Boolean inverse semigroups include semisimple ones, the statement of Proposition \ref{prop:lm} holds, in particular, for semisimple and fundamental Boolean inverse semigroups (see \cite[Proof of Proposition 6.7.3]{W17}). Furthermore, if $S$ is semisimple (but not necessarily fundamental) one can show, using the fact that ${\mathrm{Typ}}(S) = {\mathrm{Typ}}({\mathrm{S/\mu}})$, that ${\bf f}$ is one-to-one (see \cite[Proposition 6.3.7]{W17} for details). It is interesting to note that ${\bf f}$ is not necessarily surjective, as is shown in \cite[Proposition 6.7.1]{W17} for the case where  $K$  is a division ring and  $S=G^{0}$ with $G$ being a group containing a torsion element of order which is not a power of the characteristic of $K$. The map ${\bf f}$ is not necessarily injective, either, see \cite[Lemma 6.7.6]{W17}.

\section{Type monoids of graph inverse semigroups}\label{s:graph}

\subsection{Graph inverse semigroups and Leavitt path algebras}
A directed graph $\Gamma$ consists of a vertex set $\Gamma_0$, an edge set $\Gamma_1$, and the source and range maps $s, r\colon \Gamma_1\to \Gamma_0$. We say that $\Gamma$ is a {\em row-finite graph} if each vertex in $\Gamma$ emits only a finite number of edges. If $e\in \Gamma_0$ is an edge, its {\em inverse edge}
$e^*$ satisfies $s(e^*) = r(e)$ and $r(e^*) = s(e)$. We also put $(e^*)^* = e$. Let $(\Gamma_1)^*$ denote the set $\{e^*\colon e \in \Gamma_1\}$ and assume that $\Gamma_1 \cap (\Gamma_1)^* = \varnothing$ and $e\mapsto e^*$ is a bijection from $\Gamma_1$ to $(\Gamma_1)^*$. For a path $p=e_1\dots e_n$ (with all $e_i\in {\Gamma^1}$) we put $s(p)=s(e_n)$ and $r(s) = r(e_1)$\footnote{We multiply edges from the right to the left, so that $e_n$ is the first edge of $p$, and $e_1$ is the last one.}, the inverse path of $p$ is denoted by $p^* = e_n^*\dots e_1^*$.
It is convenient to view the elements of $\Gamma_0$ as paths of length $0$, extending $s$ and $r$ to $\Gamma_0$ by $s(v) = r(v) = v$  and put $v^*=v$ for all vertices $v$.

The {\em graph inverse semigroup} $I(\Gamma)$ of a directed graph $\Gamma$ is the semigroup generated by $\Gamma_0 \cup \Gamma_1\cup \Gamma_1^*$ together with a zero $0$ subject to the relations:
\begin{enumerate}
\item[(G1)] $vv' = \delta_{v,v'}v$ for all $v,v'\in \Gamma_0$.
\item[(G2)] $es(e) = r(e)e =e$ for all $e\in \Gamma_1\cup \Gamma_1^*$.
\item[(G3)] $ee'^* = \delta_{e,e'}r(e)$ for all $e,e'\in  \Gamma_1$.
\end{enumerate}

If $K$ is a field, the {\em graph $K$-algebra} $L_K(\Gamma)$, or the {\em Leavitt path algebra} associated with $\Gamma$, is defined as the $K$-algebra, generated by $\Gamma_0 \cup \Gamma_1\cup \Gamma_1^*$ subject to the relations (G1)-(G3) and also the relation
\begin{enumerate}
\item[(G4)] $v = \sum\limits_{e\in \Gamma_1\colon s(e) = v} e^*e.$
\end{enumerate}

It is known \cite[Example 3.2]{CS15} that $L_K(\Gamma)$ is isomoprhic to the Steinberg algebra of the topological groupoid associated with $\Gamma$. The Boolean inverse semigroup of this groupoid (see \cite{JL14} for details)  is the {\em tight Booleanization} ${\mathcal B}_{tight}(I(\Gamma))$ of $I(\Gamma)$. It follows that $L_K(\Gamma)$ is isomorphic to $K\langle {\mathcal B}_{tight}(I(\Gamma)) \rangle$. The image of the canonical map from $I(\Gamma)$ to $L_K(\Gamma)$ is known as the {\em Leavitt inverse semigroup} $LI(\Gamma)$, see \cite{MMW21} for details. 

\subsection{The graph monoid and the isomorphism} The {\em graph monoid} $M_{\Gamma}$ is the abelian monoid given by the generators $\{a_v \colon v \in \Gamma_0\}$, with the defining relations:
\begin{enumerate}
\item[(M)] $a_v=\sum\limits_{e\in \Gamma_1\colon s(e) = v} a_{r(e)}$ for every $v\in \Gamma_0$ that emits edges.
\end{enumerate}  

It was proved by Ara, Moreno and Pardo in \cite[Theorem 3.5]{AMP07} (for a generalization of this theorem to certain classes of algebras of separated graphs, see \cite[Theorem 4.3]{AG12}) that, for a row-finite graph $\Gamma$, there is a natural monoid isomorphism $V(L_K(\Gamma))\simeq M(\Gamma)$. 

\begin{proposition}\label{prop:a} Each ${\mathcal D}$-class of $I(\Gamma)$ has a representative of the form $v$ where $v\in \Gamma_0$. Consequently, ${\mathrm{Typ}}({\mathcal B}_{tight}(I(\Gamma)))$ is $\Gamma_0$-generated. Furthermore, ${\mathrm{Typ}}({\mathcal B}_{tight}(I(\Gamma)))$ is a canonical quotient of $M_{\Gamma}$.
\end{proposition}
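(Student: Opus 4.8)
The plan is to prove the three assertions in turn, driven by the normal forms in $I(\Gamma)$, the $\mathcal{D}$-criterion of Lemma \ref{lem:charD}, and the fact that relation (G4) becomes an orthogonal join in the tight Booleanization. First I would record the two computations that power everything. For an edge $e\in\Gamma_1$, relation (G3) gives ${\bf r}(e)=ee^{*}=r(e)$, a full vertex, while ${\bf d}(e)=e^{*}e$ is a proper subidempotent of $s(e)$; more generally, for a path $p=e_1\cdots e_n$ one collapses $e_ie_i^{*}=r(e_i)$ from the inside out to get ${\bf r}(p)=pp^{*}=r(p)$ and ${\bf d}(p)=p^{*}p$, and the nonzero idempotents of $I(\Gamma)$ are exactly the $p^{*}p$ (with $v^{*}v=v$ for length-zero paths). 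Taking $s=p$ in Lemma \ref{lem:charD} yields $p^{*}p\mathrel{\mathcal{D}}r(p)$, in particular $e^{*}e\mathrel{\mathcal{D}}r(e)$ for every edge. Since every $\mathcal{D}$-class of an inverse semigroup contains an idempotent (namely ${\bf d}(s)\mathrel{\mathcal{D}}s$) and each nonzero idempotent $p^{*}p$ is $\mathcal{D}$-equivalent to the vertex $r(p)$, every nonzero $\mathcal{D}$-class of $I(\Gamma)$ has a representative $v\in\Gamma_0$; this is the first assertion. These $\mathcal{D}$-relations survive the canonical homomorphism $I(\Gamma)\to{\mathcal B}_{tight}(I(\Gamma))$, as the witnessing element $p$ maps to a witness for the images.

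Next I would prove the generation statement. Write $S={\mathcal B}_{tight}(I(\Gamma))$, so that ${\mathrm{Typ}}(S)=U({\mathrm{Int}}(S))$ is generated by the classes $\overline{\varepsilon}$ with $\varepsilon\in E(S)$, subject to $\overline{\varepsilon_1\oplus\varepsilon_2}=\overline{\varepsilon_1}+\overline{\varepsilon_2}$. The crux is that in the tight Booleanization every idempotent is a finite orthogonal join of images of idempotents $q^{*}q$ of $I(\Gamma)$. Indeed, $E(I(\Gamma))$ is a forest (meets of incomparable $p^{*}p$ vanish), so a general idempotent of the full Booleanization is a finite join of differences $p^{*}p\setminus\bigvee_j q_j^{*}q_j$ with $q_j^{*}q_j<p^{*}p$; and the tightness identity $v=\bigoplus_{s(e)=v}e^{*}e$, pulled back along $p$ via ${\bf r}(p)=r(p)$ to $p^{*}p=\bigoplus_{s(e)=r(p)}(ep)^{*}(ep)$, lets one refine $p^{*}p$ step by step down to the level of the finitely many $q_j$, isolating each $q_j^{*}q_j$ together with finitely many ``wrong-turn'' basic idempotents. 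Row-finiteness makes each refinement step, hence the whole join, finite. Granting this, $\overline{\varepsilon}=\sum_i\overline{q_i^{*}q_i}=\sum_i\overline{r(q_i)}$ by the first part, so $\{\overline{v}\colon v\in\Gamma_0\}$ generates ${\mathrm{Typ}}(S)$.

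Finally I would exhibit the canonical surjection. Define $\phi\colon M_{\Gamma}\to{\mathrm{Typ}}(S)$ on generators by $a_v\mapsto\overline{v}$. To check that $\phi$ respects the defining relation (M), apply $\overline{(\cdot)}$ to the tightness identity $v=\bigoplus_{s(e)=v}e^{*}e$ in $S$: additivity over orthogonal joins gives $\overline{v}=\sum_{s(e)=v}\overline{e^{*}e}$, and $e^{*}e\mathrel{\mathcal{D}}r(e)$ turns this into $\overline{v}=\sum_{s(e)=v}\overline{r(e)}$, which is exactly the image of $a_v=\sum_{s(e)=v}a_{r(e)}$. Thus $\phi$ is a well-defined monoid homomorphism, and it is surjective by the previous paragraph; hence ${\mathrm{Typ}}({\mathcal B}_{tight}(I(\Gamma)))$ is a canonical quotient of $M_{\Gamma}$.

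I expect the main obstacle to be the middle step: controlling the idempotent semilattice of the tight Booleanization and verifying that the tightness relations together with row-finiteness collapse every relative complement into a finite orthogonal join of basic idempotents $q^{*}q$. This is precisely where the passage from $I(\Gamma)$ to ${\mathcal B}_{tight}(I(\Gamma))$ does real work; one may instead import it from the known groupoid model of ${\mathcal B}_{tight}(I(\Gamma))$ (cf.\ \cite{JL14}), whose compact open bisections admit exactly such a basis. The well-definedness and surjectivity of $\phi$ are then formal. Note that injectivity of $\phi$, i.e.\ that this quotient is an isomorphism, is not asserted here and is deferred to Theorem \ref{th:isom_graph}.
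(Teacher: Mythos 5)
Your proof is correct and follows essentially the same route as the paper's: vertex representatives via $y^*y \mathrel{\mathcal D} yy^* = r(y)$, generation of ${\mathrm{Typ}}({\mathcal B}_{tight}(I(\Gamma)))$ from the fact that idempotents of ${\mathcal B}_{tight}(I(\Gamma))$ are finite orthogonal joins of images of idempotents $q^*q$ of $I(\Gamma)$, and the relations (M) verified using the tight analogue of (G4) together with $e^*e \mathrel{\mathcal D} ee^* = r(e)$. The only difference is one of detail: the structural fact about $E({\mathcal B}_{tight}(I(\Gamma)))$ that you establish by the refinement/``wrong-turn'' argument (your stated crux) is simply asserted in the paper, so your write-up is, if anything, more complete on that point.
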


\begin{proof} Recall that every element of $I(\Gamma)$ can be written as $x^*y$ where $x$ and $y$ are paths with common ranges (this is well known and follows from (G3)). So every idempotent is ${\mathcal D}$-equivalent to one of the form $(x^*y)^*x^*y = y^*xx^*y = y^*y$.  If the length of $y$ is zero then it is some $v$, and we are done. Other\-wise, we have $r(y) = yy^*  \mathrel{\mathcal D} y^*y$.  
The idempotents of the tight Booleanization ${\mathcal B}_{tight}(I(\Gamma))$ are finite (canonical images of) orthogonal joins of the idempotents of $I(\Gamma)$ and so each ${\mathcal D}$-class has a representative which is a finite orthogonal sum of the vertex idempotents $v$. This means that ${\mathrm{Int}}({\mathcal B}_{tight}(I(\Gamma)))$, and thus also ${\mathrm{Typ}}({\mathcal B}_{tight}(I(\Gamma)))$, is generated by $\Gamma_0$. To show that ${\mathrm{Typ}}({\mathcal B}_{tight}(I(\Gamma)))$ is a canonical quotient of $M_{\Gamma}$ it remains to show that each defining relations (M) of $M_{\Gamma}$ hold in ${\mathrm{Typ}}({\mathcal B}_{tight}(I(\Gamma)))$. It is enough to show that each defining relation (M) holds in ${\mathrm{Int}}({\mathcal B}_{tight}(I(\Gamma)))$. But this is indeed so since, using $r(e)=ee^* \mathrel{\mathcal{D}} e^*e$ and the analogue of (G4) (see \cite{LV21}) for ${\mathcal B}_{tight}(I(\Gamma))$, we have 
$$\bigoplus\limits_{e\in \Gamma_1\colon s(e) = v} [r(e)]  = \bigoplus\limits_{e\in \Gamma_1\colon s(e) = v} [e^*e] = [v].
$$
\end{proof}

We arrive at the following statement which is inspired by \cite[Theorem 7.5]{ABPS21} where a similar result is established for the case where $\Gamma$ is an adaptable separated graph and $I(\Gamma)$ is the inverse semigroup attached to it (for its definition, see \cite{ABPS21}).

\begin{theorem} \label{th:isom_graph}  Let $\Gamma$ be a row-finite graph. Then ${\mathrm{Typ}}({\mathcal B}_{tight}(I(\Gamma)))$ is canonically isomorphic to $M_{\Gamma}$.
\end{theorem}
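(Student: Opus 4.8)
The plan is to combine Proposition~\ref{prop:a}, which already furnishes a canonical \emph{surjective} homomorphism $\varphi\colon M_\Gamma \to {\mathrm{Typ}}({\mathcal B}_{tight}(I(\Gamma)))$ sending each generator $a_v$ to the type $[v]$ of the corresponding vertex idempotent, with the $K$-theoretic machinery of Section~\ref{s:connection}. Since $\varphi$ is already onto, the whole content of the theorem is the injectivity of $\varphi$, and for this I would exhibit an explicit left inverse rather than argue directly. Writing $S = {\mathcal B}_{tight}(I(\Gamma))$, the idea is to route the type monoid through the nonstable $K$-theory of its tight algebra, using the chain of canonical maps
$$
M_\Gamma \xrightarrow{\ \varphi\ } {\mathrm{Typ}}(S) \xrightarrow{\ {\bf f}\ } V(K\langle S\rangle) \xrightarrow{\ \psi\ } M_\Gamma ,
$$
and then to show that the composite $\psi\circ{\bf f}\circ\varphi$ is the identity of $M_\Gamma$. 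Here ${\bf f}$ is the canonical map of Section~\ref{s:connection}, $\psi$ is the Ara--Moreno--Pardo isomorphism $V(L_K(\Gamma))\simeq M_\Gamma$ of \cite{AMP07}, and we use the identification $K\langle S\rangle \simeq L_K(\Gamma)$ noted above.

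The verification reduces to a computation on the generators $a_v$. By construction $\varphi(a_v) = [v]$; the map ${\bf f}$ then sends $[v]$ to the Murray--von Neumann class in $V(L_K(\Gamma))$ of the image of the vertex idempotent $v$, which is simply the vertex idempotent $v\in L_K(\Gamma)$; and the Ara--Moreno--Pardo isomorphism $\psi$ carries the class of the vertex idempotent $v$ back to $a_v$. Hence $\psi({\bf f}(\varphi(a_v))) = a_v$, so $\psi\circ{\bf f}\circ\varphi$ agrees with the identity on a generating set of $M_\Gamma$ and is therefore the identity endomorphism. In particular $\varphi$ admits a left inverse, so it is injective; being surjective by Proposition~\ref{prop:a}, it is the desired canonical isomorphism.

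The main obstacle is not any single estimate but the bookkeeping needed to make the three identifications mutually compatible so that the generator-tracking closes up exactly. Concretely, I would need to confirm that ${\bf f}$ is genuinely defined on ${\mathrm{Typ}}(S)$, i.e.\ that the canonical map $S\to K\langle S\rangle$ is the additive measure to which Section~\ref{s:connection} applies, and that the isomorphism of \cite{AMP07} is precisely the one sending vertex-idempotent classes to the generators $a_v$; both are standard but are exactly the places where the tightness relation (G4) and the row-finiteness of $\Gamma$ enter, just as in Proposition~\ref{prop:a}. A more self-contained alternative would be to prove injectivity of $\varphi$ by hand, analysing the relation ${\mathcal D}$ on the idempotents of ${\mathcal B}_{tight}(I(\Gamma))$ and showing that every additive relation $[e]\oplus[f]=[g]$ in ${\mathrm{Int}}(S)$ is a consequence of the relations (M); but this essentially re-proves the Ara--Moreno--Pardo computation, so routing through $V(L_K(\Gamma))$ is both shorter and in the spirit of the paper's use of Boolean inverse semigroups as a bridge to Leavitt path algebras.
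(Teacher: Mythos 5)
Your proposal is correct and follows essentially the same route as the paper's own proof: both use the chain $M_{\Gamma} \to {\mathrm{Typ}}({\mathcal B}_{tight}(I(\Gamma))) \to V(L_K(\Gamma)) \to M_{\Gamma}$ built from Proposition~\ref{prop:a}, the map ${\bf f}$ of Section~\ref{s:connection}, and the Ara--Moreno--Pardo isomorphism, concluding that the first map is an isomorphism. Your explicit generator-tracking argument that the composite is the identity simply spells out the step the paper compresses into ``it follows.''
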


\begin{proof} We have the following chain of canonical homomorphisms:
$$
M_{\Gamma} \to {\mathrm{Typ}}({\mathcal B}_{tight}(I(\Gamma))) \to V(L_K(\Gamma)) \to M_{\Gamma},
$$
where the first map is due to Proposition \ref{prop:a}, the second one is ${\bf f}$ of Section~\ref{s:connection}, and the third one is the isomorphism of \cite[Theorem 3.5]{AMP07}.
It follows that the first homomorphism $M_{\Gamma} \to {\mathrm{Typ}}({\mathcal B}_{tight}(I(\Gamma)))$ is an isomorphism.
\end{proof}

\section*{Acknowledgement} The author thanks Pere Ara for useful comments.

\end{document}